\documentclass[notitlepage, 11pt]{article}

\usepackage{color}
\usepackage{amsmath}
\usepackage[mathscr]{eucal}
\usepackage{latexsym,amssymb, upgreek}
\usepackage{graphicx}
\usepackage{mathrsfs}
\allowdisplaybreaks

 \topmargin      0.0truein
 \oddsidemargin  0.0truein
 \evensidemargin 0.0truein
 \textheight     8.5truein
 \textwidth      6.3truein
 \headheight     0.0truein
 \headsep        0.3truein

\newtheorem{lemma}{Lemma}
\newtheorem{theorem}{Theorem}
\newtheorem{corollary}{Corollary}
\newtheorem{definition}{Definition}

\newtheorem{proposition}{Proposition}

\newtheorem{assumption}{Assumption}

\newtheorem{remark}{Remark}


\newcommand{\beginsec}{
\setcounter{lemma}{0}
\setcounter{theorem}{0}
\setcounter{corollary}{0}
\setcounter{definition}{0}
\setcounter{example}{0}
\setcounter{proposition}{0}
\setcounter{condition}{0}
\setcounter{assumption}{0}
\setcounter{conjecture}{0}
\setcounter{problem}{0}
\setcounter{remark}{0}
}

\newcommand{\noi}{\noindent}

\newcommand{\ra}{\rightarrow}
\newcommand{\Ra}{\Rightarrow}

\newcommand{\E}{\mathbb{E}}
\newcommand{\R}{\mathbb{R}}

\newcommand{\p}{\mathbb{P}}
\newcommand{\N}{\mathbb{N}}
\newcommand{\I}{\mathcal{I}}

\newcommand{\la}{\lambda}

\newcommand{\eps}{\varepsilon}

\newcommand{\ph}{\varphi}

\newcommand{\al}{\alpha}

\newcommand{\del}{\delta}
\newcommand{\om}{\omega}

\newcommand{\Gam}{\mathnormal{\Gamma}}
\newcommand{\Del}{\mathnormal{\Delta}}
\newcommand{\Th}{\mathnormal{\Theta}}

\newcommand{\Om}{\mathnormal{\Omega}}

\newcommand{\C}{{\mathbb C}}
\newcommand{\D}{{\mathbb D}}

\newcommand{\Z}{{\mathbb Z}}

\newcommand{\EE}{{\mathbb E}}

\newcommand{\calA}{{\cal A}}
\newcommand{\calB}{{\cal B}}

\newcommand{\calD}{{\cal D}}
\newcommand{\calE}{{\cal E}}
\newcommand{\calF}{{\cal F}}

\newcommand{\calI}{{\cal I}}

\newcommand{\calK}{{\cal K}}

\newcommand{\calM}{{\cal M}}

\newcommand{\calQ}{{\cal Q}}
\newcommand{\calR}{{\cal R}}
\newcommand{\calS}{{\cal S}}

\newcommand{\calY}{{\cal Y}}
\newcommand{\calZ}{{\cal Z}}

\newcommand{\bC}{{\mathbf C}}

\newcommand{\lan}{\langle}

\newcommand{\ran}{\rangle}

\newcommand{\skp}{\vspace{\baselineskip}}

\newcommand{\w}{\wedge}
\newcommand{\lt}{\left}
\newcommand{\rt}{\right}

\newcommand{\bvnorm}[1]{[\kern-0.45ex[\kern0.1ex #1 \kern0.1ex]\kern-0.45ex]}

\newcommand{\mean}[1]{\langle#1\rangle}
\newcommand{\To}{\Rightarrow}

\newcommand{\iy}{\infty}
\newcommand{\osc}{\text{osc}}

\newcommand{\B}{{\cal B}}

\newcommand{\up}{\uparrow}
\newcommand{\Dup}{\mathbb{D}_{\calM}^\uparrow}
\newcommand{\bal}{\boldsymbol{\alpha}}
\newcommand{\bbeta}{\boldsymbol{\beta}}
\newcommand{\bxi}{\boldsymbol{\xi}}
\newcommand{\biota}{\boldsymbol{\iota}}

\newcommand{\bmu}{\boldsymbol{\mu}}

\newcommand{\qed}{\hfill $\Box$}

\newcommand{\bm}{\mathbf{m}}
\newcommand{\ind}{\mathbf{1}}


\usepackage{latexsym}
\usepackage{amsmath, amssymb, amsfonts}

\numberwithin{equation}{section}

\title{Law of large numbers for the many-server earliest-deadline-first queue}
\author{Rami Atar\footnote{Department of Electrical Engineering, Technion--Israel Institute of Technology, Haifa 32000, Israel.}
\hspace{4em} Anup Biswas\footnote{Department of Mathematics, Indian Institute of Science Education and Research, Pune 411008, India.}
\hspace{4em} Haya Kaspi\footnote{Department of Industrial Engineering and Management,
Technion--Israel Institute of Technology,
Haifa 32000, Israel.}
 }

\date{May 3, 2017}

\begin{document}

\maketitle

\begin{abstract}
A many-server queue operating under the earliest deadline first discipline, where the distributions
of service time and deadline are generic, is studied at the law of large numbers scale.
Fluid model equations, formulated in terms of the many-server transport equation
and the recently introduced measure-valued Skorohod map,
are proposed as a means of characterizing the limit.
The main results are the uniqueness of solutions to these equations,
and the law of large numbers scale convergence to the solutions.

\skp

\noi{\bf AMS subject classifications:}\, 60F17, 60G57, 68M20

\skp

\noi{\bf Keywords:}\, measure-valued processes,
measure-valued Skorohod map, many-server transport equation,
fluid limits, earliest-deadline-first, least-patient-first, many-server queues
\end{abstract}

\section{Introduction}\label{intro}

This paper proves a law of large numbers (LLN) many-server limit
for a queueing model with general service time and deadline distributions,
operating under the {\it earliest-deadline-first} (EDF) scheduling policy
(we refer to this model as G/G/N+G EDF).
By a {\it many-server limit} we refer to a setting where the number of servers
grows without bound.
The limit is characterized in terms of a set of so-called {\it fluid model equations} (FME)
that involve both the {\it many-server transport equation} (MSTE) \cite{kaspi-ramanan}
and the recently introduced {\it measure-valued Skorohod map} (MVSM)
\cite{atar-bis-kaspi-ram}.
It provides the first result on the EDF policy involving a many-server limit.
Several papers have analyzed EDF asymptotically
by appealing to the so-called {\it frontier process} (see below).
However, as argued in \cite{atar-bis-kaspi-ram}, the method based on this process
is not generic enough to cover a large variety of models (especially ones with
time-varying parameters).
Our motivation is to extend the asymptotic analysis of EDF to settings where
the method involving frontier process is not expected to be effective; the many server regime
offers a natural setting of this sort (even when the parameters are constant over time).

In recent years the use of measure-valued processes in mathematical modeling of
queueing systems has been very successful.
As far as many-server asymptotics are concerned, it is well understood since
the work of Halfin and Whitt \cite{hal-whi} that exponential service time distribution
leads to simple limit dynamics; specifically, the diffusion-scale heavy traffic
limit of \cite{hal-whi} is characterized by a diffusion process on the real line.
However, there is a great deal of motivation to study many-server
models under general service time distributions, stemming from applications
such as call centers and cloud computing. For example,
the statistical study of a call center by Brown et al.\ \cite{brown}
finds a good fit of the service time data to the lognormal distribution.
In this vein, in \cite{whitt}, Whitt considered a G/G/N system with
abandonment and proposed a deterministic LLN (otherwise referred to as fluid) approximation.
In \cite{kaspi-ramanan}, Kaspi and Ramanan
obtained measure-space valued fluid limits for such systems.
Kang and Ramanan generalized this work by modeling customer abandonment
\cite{kang-ramanan}. Further generalizations in this direction
were obtained by Zhang \cite{zhang} and Walsh-Zu\~{n}iga \cite{zuniga}. In \cite{atar-kas-shim},
Atar et al.\ studied multi-class many-server queues with fixed priority and
established the existence of a unique fluid limit.
Kang and Ramanan studied ergodic properties of
the G/G/N+G model and its relation with the invariant states
of the fluid limit in \cite{kang-ramanan1}.
Reed \cite{reed} established the fluid and diffusion limits
of the customer-count processes for many-server queueing systems
under a finite first moment assumption on service time distribution.

The aforementioned works were concerned with the {\it first-come first-served} (FCFS)
discipline. Many-server systems operating under the EDF discipline were considered
recently by Mandelbaum and Mom\'{c}ilovi\'{c} \cite{mandel-mom}, where
a fluid limit heuristic was developed. Motivated by prioritizing customers with least patience
and emphasizing the importance of this policy for emergency services, \cite{mandel-mom}
refers to this policy as {\it least-patience-first}.
Decreusefond and Moyal \cite{Dec-Moyal} study the fluid limits of M/M/1+M EDF,
and Atar et al.\ \cite{atar-bis-kaspi} generalize these results to G/G/1+G EDF.
Diffusion limits for G/G/1+G EDF systems
were studied by Doytchinov et al.\ \cite{Doy-Leh-Shre}
and Kruk et al.\ \cite{kruk-lehoc-ram-shre}.
For additional work on EDF in asymptotic regimes other than the many-server limit
we refer to Kruk \cite{kruk} and references therein.

An attractive feature of EDF, established in several of the aforementioned settings,
is that it minimizes the abandonment count.
Specifically, in \cite{Moyal, panwar-towsley, panwar-towsley-wolf} it was
shown for a single server model that EDF minimizes
customer abandonments within a certain class of scheduling policies.
The paper \cite{kruk-lehoc-ram-shre} studies G/G/1+G and shows
that the {\it reneged work} is minimized under EDF.
Comments in the introduction of \cite{mandel-mom} also address this minimality property,
and so do some of the results of Section 4.1.5 of \cite{atar-bis-kaspi-ram}.

In this article we are interested in the many-server LLN limit
of the G/G/N+G EDF. To elaborate on the hurdles
in obtaining fluid limits in this setting
let us briefly mention some tools that have been used in the literature
to treat the LLN for the single server EDF.
The aforementioned frontier process has been one of the main tools in
\cite{atar-bis-kaspi, Doy-Leh-Shre, kruk-lehoc-ram-shre}.
One defines the lead time of a customer at time $t$ as the (possibly negative)
difference between the customer's deadline and the time $t$.
The frontier process at time $t$ is defined as the maximum lead time at $t$
of all the customers that have ever been in service in the interval $[0, t]$.
The method developed in the papers mentioned above relies on the validity of the property
that, in an asymptotic sense, the frontier at any given time separates the population of
customers to those that have been sent to service and those that are still in the buffer,
according as their relative deadline at that time is below the value of the frontier
or above it, respectively. The main idea of using the frontier process asymptotics
to characterize the asymptotics of the full model is that when this property is valid,
the frontier can be recovered directly from the model's
primitive data (specifically, in terms of a one-dimensional Skorohod map),
and at the same time the full state of the system, including the measure-valued queueing process,
can be expressed in terms of the frontier.

It was argued in \cite{atar-bis-kaspi-ram}
that this method breaks down for more general settings than those considered in
the above papers, because the property by which the two
populations are asymptotically separated by the frontier, fails to hold.
Such settings include the single-server model at fluid scale
with time varying characteristics (such as rate of arrivals, service
or patience distribution). An alternative approach developed in
\cite{atar-bis-kaspi-ram} was to introduce a certain Skorohod-type transformation
in the space of paths taking values in the space of finite measures on the real line,
referred to as a measure-valued Skorohod map (MVSM) (see Proposition~\ref{prop2.1}).
It was used there to obtain fluid limits of several queueing models,
including the G/G/1+G EDF with (possibly) time varying characteristics.

Consider now the many-server asymptotics of the G/G/N+G system.
As far as the FCFS discipline is concerned,
it is well understood, and captured by the MSTE of \cite{kaspi-ramanan},
that the time evolution of the ages of jobs in the service pool
affects the rate at which jobs are transferred from the buffer to the pool.
In particular, this rate varies over time even for a model
with time homogeneous characteristics, unless the system is at equilibrium
(or when the service time distribution is exponential).
Now, the time evolution of the content of the buffer under
variable rate of transferring jobs to the service pool is much like that
of a single server model for which the service rate varies over time.
In light of this analogy and the discussion above regarding the expected failure
of the frontier process method for systems with time varying characteristics,
one does not expect this method to be useful
for analyzing a {\it time homogeneous} G/G/N+G EDF system off equilibrium.
The approach proposed in this paper is to appeal to the MVSM instead.

The aforementioned MSTE and MVSM serve in this work as two main building blocks.
The former is used to describe the evolution of the collection of age processes,
that keeps track of the time jobs spend in service. The latter captures the EDF
discipline, and allows one to express the reneging count process (see Theorem~\ref{Thm2}).
The representation of the FME is provided in terms of these two building blocks.
The first main difficulty we address is the uniqueness of solutions to the FME
(see Section~\ref{fluid eqn}). Uniqueness cannot be obtained from
\cite{kang-ramanan} due to the quite different structure of the FME,
as well as to the fact that a certain monotonicity property
of the reneging count process with respect to the queue length, used crucially
in \cite{kang-ramanan} (see especially (3.14) there), is not clear in our setting.
Further, in \cite{atar-bis-kaspi-ram}, uniqueness of the FME is established
via a minimality result (see Theorem~3.1 there),
which holds when the service rate is strictly positive.
In a many-server setting the rate at which jobs are transferred from the buffer
to the service pool might get arbitrarily close to zero even when the system
is busy, and thus the arguments from \cite{atar-bis-kaspi-ram} do not seem to apply.
Moreover, unlike the single server EDF model, treated in \cite{atar-bis-kaspi-ram},
where the service rate was part of the data, in our setting the rate of job departure
from the system (and as a result also the transfer rate alluded to above)
is part of the solution, which makes the uniqueness proof significantly more difficult.
A second hurdle is the convergence, where several technicalities must be addressed,
especially the identification of subsequential limits as solutions of the FME.
Our main two results are the uniqueness of FME solutions (Theorem~\ref{Thm1})
and the LLN scale convergence (Theorem \ref{Thm3}). They are both established
under certain assumptions, namely Assumption \ref{Assump3} and \ref{Assump4},
that are treated separately.

As a byproduct of the two main results
we obtain a LLN limit for G/G/N+D FCFS (see Corollary~\ref{Cor2}),
where `D' stands for deterministic deadline (this case was not covered by \cite{kang-ramanan, zhang, zuniga}).
Let us remark that we could also study scaling limits of other performance measures associated to G/G/N+G EDF, for instance,
waiting time. The analysis of waiting time in our settings would be very similar to that available in \cite{kaspi-ramanan, kang-ramanan}, and
therefore we do not pursue  it here.

The outline of this paper is as follows. At the end of this section we provide our basic notation.
Section \ref{sec-description} describes the model and its scaling, introduces the MVSM
and the MSTE and uses them to represent the model dynamics.
Section \ref{main-result} formulates the FME
and states the two main results: the uniqueness of solutions of these equations,
and the convergence of the scaled processes to their solution.
Section \ref{sec-fme} provides the proof of uniqueness of solutions.
Section \ref{sec-tight} establishes tightness of the scaled processes, and finally,
Section \ref{sec-char} completes the proof of convergence.

\skp

\noindent{\bf Notation.}\,
The set of nonnegative real numbers is denoted by $\R_+$. For $x, y\in\R$, $x\vee y=\max(x,y)$,
$x\wedge y=\min(x,y)$, $x^+=x\vee 0$ and $x^-=(-x)\vee 0$.
For $A\subset \R$, $\ind_A$ denotes the indicator function of $A$. Given a metric space
$\calS$, $\C_b(\calS)$ and $\C_c(\calS)$ are, respectively, the space of real-valued
bounded continuous functions and the space of real-valued continuous functions
with compact support defined on $\calS$.
When $\calS$ is a subset of a finite dimensional vector space, we denote the set of continuously differentiable functions on $\calS$ with compact support by $\C^1_c(\calS)$.
Let $\C_{b, +}(\calS)$ denote the subset of $\C_b(\calS)$ of $\R_+$-valued functions.
Let $\D_\calS(\R_+)$ and $\C_{\calS}(\R_+)$, abbreviated as $\D_\calS$ and $\C_\calS$,
denote the spaces of functions
$\R_+\to \calS$ that are right continuous with finite left limits (RCLL),
and respectively, continuous.
It is always assumed that $\D_\calS$ is endowed with the $J_1$ topology \cite{ethier-kurtz}.
Denote by $\D_{\R_+}^\up$ (resp., $\C_{\R_+}^\up$)
the subset of $\D_{\R_+}$ (resp., $\C_{\R_+}$) of nondecreasing functions.
For $\varphi:\R_+\to\R$, define
$\|\ph\|_T=\sup_{s\in [0, T]}|\ph(s)|$ for $T<\infty$,
$\|\ph\|_\infty=\sup_{s\in[0, \infty)}|\ph(s)|$,
and given $\del>0$,
$$
\osc_\del(\varphi, T)=\sup\{|\varphi(s)-\varphi(t)|\, : \, |s-t|\leq\del, \; s, t\in [0, T]\}.
$$
For functions $\ph=\varphi(x, t)$ defined on $\R^n\times \R$ we write
$\varphi_x$ and $\varphi_t$ for the partial derivatives with respect to the first ($x\in\R^n$)
and second ($t\in\R$) variable, respectively.

The space of non-negative finite Borel measures on a Polish space $\calS$
is denoted by $\calM(\calS)$
and the Borel $\sigma$-field of $\calS$ is denoted by $\calB(\calS)$.
Given $a<\infty$, $\calM_a(\calS)$ denotes the subset of $\calM(\calS)$
consisting of measures with total mass less or equal to $a$.
$\calM^0(\calS)$ denotes the class of atomless measures.
We abbreviate $\calM(\R_+)$ by $\calM$ and $\calM^0(\R_+)$ by $\calM^0$.
For  any $\mu\in\calM(\calS)$ and
Borel measurable function $g$ on $\calS$, denote $\lan g, \mu\ran= \int g d\mu$. Endow $\calM(\calS)$ with the Prohorov metric, denoted by $d_\calM$.
It is well known that $(\calM(\calS), d_\calM)$ is a Polish space \cite[Appendix]{daley-verejones},
and that this topology on $\calM(\calS)$ is equivalent to the weak topology on this space.
Denote by $\del_x$ the unit mass at the point $x$.
Denote convergence in distribution by the symbol `$\To$'.
Finally, for  $\zeta\in\D_{\R_+}^\up$, denote by $\bm^\zeta$ the Lebesgue-Stieltjes measure that
$\zeta$ induces on $(\R_+, {\mathcal B}(\R_+))$, namely,
\[
\bm^\zeta(B)=\zeta(0)\del_0(B)+\int_{(0,\iy)}\ind_B(t)d\zeta_t,  \quad
B \in {\mathcal B}(\R_+).
\]
Throughout, we write ``$d\zeta$-a.e.''  to  mean ``$d\bm^\zeta$-a.e.''

\section{Model and state dynamics}\label{sec-description}
\beginsec

\subsection{Model description}\label{sec21}

Consider a sequence of systems indexed by $N\in\N$, where the  $N$-th system
has $N$ servers that work in parallel.
Each system also has a buffer of infinite capacity, in which arriving customers
are served according to a non-idling, non-preemptive EDF policy described as follows.
Customers arrive with specified deadlines.
An arriving customer enters service immediately if there is a free server at the time of its arrival.
On the event that all servers are busy, it is queued. When a server becomes available
(and the queue is nonempty), it picks the customer that has the earliest deadline
among those that are in the queue. Ties are broken according to the order of arrival.
Customers that do not enter service by the time
of their deadline leave the system (however, customers do not renege while being served).
All servers are identical and capable of serving all the customers.
The stochastic processes associated with the model, to be
introduced below, are all defined on a common probability space $(\Om, \calF, \p)$.
The symbol $\E$ denotes the expectation with respect to $\p$.

A process with right-continuous nondecreasing $\Z_+$-valued sample paths
is referred to as
a {\it counting process}. A counting process that has only jumps of size 1 and starts
at zero is said to be {\it simple}.
Let $E^N$ be a simple counting process that accounts for arrivals in the $N$-th system.
Namely, the number of arrivals in the time interval $[0, t]$ is given by $E^N_t$.
The jump times of this process, that we denote by $\{a_i^N,\; i\in\N\}$,
correspond to the arrival times. This sequence is nondecreasing, so that
$a^N_i$ gives the arrival time of the $i$-th customer.
The number of customers in the system (i.e., in service or in the queue)
at time $0$ is denoted by $X^N_0$.
The sequence $\{a_i^N\}$ is next extended to $\calI^N:=\{-X^N_0+1,-X^N_0+2,\ldots,0\}\cup\N$, so that
$\{a^N_i, i\leq 0\}$ give the arrival times of the $X^N_0$ customers present at time $t=0$.
Note that $\calI^N$ is a random set of indices.

To model deadlines,
let $\{r^N_j, j\in \Z\}$ be an i.i.d.\ sequence of positive random variables.
For $i\in\calI^N$, $r^N_i$ represents the {\it patience time} of the customer $i$, that is,
the deadline of the customer relative to its arrival time. Thus the deadline of customer $i$
is given by $u^N_i=a^N_i + r^N_i$, $i\in \I^N$.
We will always refer to $u^N_i$ as the \textit{absolute deadline} of the
$i$-th customer, to avoid confusion with what is referred to in \cite{atar-bis-kaspi-ram}
as {\it relative deadline} (which indicates the deadline with respect to the current time,
and is elsewhere referred to
as the {\it remaining patience time} or \textit{lead time}
\cite{atar-bis-kaspi, Doy-Leh-Shre, kruk-lehoc-ram-shre}).
To recapitulate the reneging rule, customer $i\ge1$ that does not start
service by time $u^N_i$ leaves the system at that time.

Let $\calQ^N_0\in \calM$ be a measure describing the state of the queue at time $0$,
such that for $B\in\calB(\R_+)$, $\calQ^N_0(B)$ represents
the number of those customers in the queue at time $0$ whose absolute deadlines are in $B$.
Define a measure-valued process $\calE^N$, with sample paths in $\D_{\calM}$,
as follows. For $B\in\calB(\R_+)$ and $t\geq 0$,
\begin{equation}\label{00}
\calE^N_t(B)= \sum_{i=1}^{E^N_t}\del_{u^N_i}(B)=\sum_{i=1}^{E^N_t}\ind_B(u^N_i).
\end{equation}
Then $\calE_t(B)$ gives the number of arrivals in $[0, t]$ whose absolute deadlines lie in $B$.
Define $\al^N$, a process with sample paths in $\D_{\calM}$, as
\begin{equation}\label{8}
\al^N_t =\calQ^N_0 + \calE^N_t.
\end{equation}
Introduce the class
\begin{equation}\label{29}
\Dup =\Big\{\zeta\in \D_{\calM}\, :\, \text{the map
$t\mapsto\mean{f, \zeta_t}$ is in $\D_{\R_+}^\up$ for every}\, f\in\C_{b, +}(\R_+)\Big\}.
\end{equation}
Since by definition, $t\mapsto\calE^N_t(B)$ are nondecreasing,
it is easy to see that the sample paths of $\al^N$ lie in $\Dup$.
It is shown in \cite[Lemma~2.1]{atar-bis-kaspi-ram} that $\Dup$ forms a closed subset of $\D_{\calM}$. The MVSM, to be introduced in the next subsection,
is defined on this space.

To model service times, consider an i.i.d.\ sequence, $\{v_i,\, i\in\Z\}$,
with common cumulative distribution function $G$ on $[0, \iy)$. For $i\in\calI^N$,
the service time of customer $i$ is $v_i$.
We assume that $G$ has density, and denote it by $g$. Let
$$
H^s=\sup\{x\in[0, \iy)\; :\; G(x)< 1\}
$$
be the right end of the support of $g$. This constant may be finite or $+\iy$.

We assume that, for each $N$,
\begin{itemize}
\item the arrival process $E^N$, the sequence of service requirements $\{v_i,\, i\in\Z\}$ and the sequence of patience times $\{r^N_i,\,  i\in\Z\}$ are mutually independent.
\end{itemize}

We refer to the time spent by a customer in service as its {\it age in service},
or simply its {\it age} \cite{atar-kas-shim, kang-ramanan, kaspi-ramanan}.
For a customer $i\in\calI^N$ that is ever admitted into service, let $\gamma^N_i$
denote the admittance time. Let $\gamma^N_i=\iy$ if this customer reneges.
Thus, for a customer $i$ initially in service, $\gamma^N_i\le0$.
We emphasize that $\gamma_i^N$ is associated with the $i$-th customer to arrive into the system
rather than the $i$-th customer to be admitted.
The age process $\om^N_i$ associated with customer $i\in\calI^N$ is defined as
\[
\om^N_i(t)=(t-\gamma^N_i)^+\w v_i,\qquad t\ge0.
\]
Note that the age of a customer is zero at time $t$ if it has not entered service by that time,
and that the age process is identically zero for those customers that renege.
Let $K^N$ be a counting process representing cumulative number
of admittances into service since time 0 (specifically, $K^N_0=0$).
Since the number of customers initially in the system is given by $X^N_0$
and $N$ is the number of servers, the number of customers initially
in service is given by $X^N_0\w N$.
For $t\in[0, \iy)$, let $\nu^N_t$ be the discrete measure on $[0, H^s)$
recording the collection of ages, given by
\begin{equation}\label{1}
\nu^N_t=\sum_{i=-X^N_0+ 1}^{E^N_t}\del_{\om^N_i(t)}\ind_{\{\om^N_i(t)<v_i,\ t\ge\gamma^N_i\}}.
\end{equation}
Note that $\lan\nu^N_t,1\ran=X^N_0\w N$.
By construction, the measure-valued process $\nu^N_t$ has sample paths in
$\D_{\calM([0, H^s))}$.

Let $D^N$ be a counting process representing
cumulative number of departures from service. We have the explicit representation
\begin{equation}\label{2}
D^N_t=\sum_{i=-X^N_0 + 1}^{E^N_t}\sum_{s\in[0, t]}
\ind_{\{\frac{d\om^N_i}{dt}(s-)>0, \frac{d\om^N_i}{dt}(s+)=0\}}.
\end{equation}
Let $R^N$ be a counting process representing cumulative number of reneging customers.
Let $X^N$ and $Q^N$ be $\Z_+$-valued processes
representing the number of customers in the system and in the queue, respectively.
Note that the number of customers in service is given by $\lan 1,\nu^N\ran$,
hence $X^N=Q^N+\lan 1,\nu^N\ran$.

The balance equations for the content of the queue,
the service station and the system, respectively, are as follows
\begin{align}
Q^N_0 + E^N_t &=  Q^N_t + K^N_t+R^N_t,\quad\;t\geq 0,\label{3}
\\
X^N_t &=Q^N_t+\lan 1,\nu^N_t\ran\,, \label{AB1}
\\
\label{7}
\mean{1, \nu^N_0}+K^N_t &= \mean{1, \nu^N_t} + D^N_t,\quad\; t\geq 0,
\\
X^N_0 + E^N_t & = X^N_t + D^N_t + R^N_t, \quad\; t\geq 0.\label{4}
\end{align}
Since the system is working under a nonidling policy, we also have
\begin{align}\label{5}
Q^N_t = [X^N_t-N]^+, \quad \text{and}\; \mean{1, \nu^N_t}= X^N_t\wedge N,\qquad t\ge0.
\end{align}
It is also evident from our description that
reneging can only occur when all the servers are busy, and so
\begin{equation}\label{6}
\int_0^\cdot [N-X^N_t]^+dR^N_t=0.
\end{equation}

We next introduce the filtration that captures the information available as a function of time.
Following \cite{kang-ramanan, kaspi-ramanan}
we introduce a {\it station process} $s^N=\{s^N_i, i\in\calI^N\}$.
Assume that the individual servers are labeled by $\{1,\ldots,N\}$.
For $t\ge0$, if customer $i$ has entered service by time $t$ then $s^N_i(t)$
is the number of the server at which it has started (and, possibly, completed)
service. Otherwise, $s^N_i(t)=0$.
For $t\in[0, \iy)$, let $\tilde\calF^N_t$ be the $\sigma$-algebra generated by
$$
\{X^N_0, \calQ^N_0, \calE^N_s, \om^N_i(s), s^N_i(s),\; i\in \calI^N,\; s\in[0,t]\},
$$
and let $\{\calF^N_t\}$ denote the associated right-continuous filtration, complete with respect to $\p$.

\subsection{Representation in terms of the MSTE and the MVSM}\label{meas-char}

We introduce the MVSM and then use it to describe the dynamics of the measure-valued process
defined above.
To this end, we first introduce the {\it measure-valued Skorohod problem}
(MVSP) from \cite{atar-bis-kaspi-ram}.

\begin{definition}[{\bf MVSP}]\label{D-mvsm}
Let $(\bal, \bmu)\in \D^\uparrow_{\calM}\times\D^\uparrow_{\R_+}$. Then
$(\bxi, \bbeta, \biota)\in\D_{\calM}\times\D^\uparrow_{\calM}\times\D^\uparrow_{\R_+}$ is said to solve the MVSP for the data $(\bal, \bmu)$ if, for each $x\in[0, \iy)$,
\begin{enumerate}
\item[1.] $\bxi_t[0, x] = \bal_t[0, x]-\bmu_t +\bbeta_t(x, \iy) + \biota_t,
 \text{ for all }\; t\geq 0$,
\item[2.] $\bxi_t[0, x] = 0,\; d\bbeta_t(x, \iy)$-a.e.
\item[3.] $\bxi_t[0, \iy) = 0,\;  d\biota_t$-a.e.
\item[4.] $\bbeta_t[0, \iy)+\biota_t=\bmu_t,
 \text{ for all }\; t\geq 0$.
\end{enumerate}
\end{definition}
Recall the definition \eqref{29}, and define $\C_{\calM}^\up$ analogously
as a subset of $\C_{\calM}$. Let
$\C^{\uparrow, 0}_ {\calM}$ denote the collection of members $\zeta\in\C_{\calM}^\up$
for which the measure $\zeta_t$ is atomless for each $t$.
The following was proved in \cite[Proposition 2.8 and 2.10]{atar-bis-kaspi-ram}.
\begin{proposition}\label{prop2.1}
\begin{enumerate}
\item[1.] For every $(\bal, \bmu)\in\D^\uparrow_{\calM}\times\D^\uparrow_{\R_+}$
there exists a unique
$(\bxi, \bbeta, \biota)\in\D_{\calM}\times\D^\uparrow_{\calM}\times\D^\uparrow_{\R_+}$
that constitutes a solution to the MVSP with data $(\bal,\bmu)$.
\item[2.] Let $\varTheta$ denote the corresponding solution map, so that
$(\bxi, \bbeta, \biota)=\Th(\bal,\bmu)$.
Then $\varTheta$ is measurable. Moreover, $\varTheta$ is continuous
on $\C^{\uparrow, 0}_ {\calM}\times\C^\uparrow_ {\R_+}$.
\end{enumerate}
\end{proposition}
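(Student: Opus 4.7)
The plan is to reduce the measure-valued Skorohod problem to a parametrized family of one-dimensional Skorohod problems, one for each level $x\in[0,\iy)$. Substituting condition~4 of Definition~\ref{D-mvsm} into condition~1 gives the pointwise identity $\bxi_t[0,x]=\bal_t[0,x]-\bbeta_t[0,x]$, so that $\bbeta$ and $\biota=\bmu_\cdot-\bbeta_\cdot[0,\iy)$ are algebraically determined by $\bxi$ and the data. For each fixed $x$, introduce the scalar nondecreasing process $Y^x_t:=\bbeta_t(x,\iy)+\biota_t$; condition~1 then reads $\bxi_t[0,x]=\bal_t[0,x]-\bmu_t+Y^x_t$, and conditions~2 and~3 combine into the single statement that $\bxi_t[0,x]=0$ holds for $dY^x_t$-almost every $t$ (condition~2 supplies this against $d\bbeta_t(x,\iy)$, while the bound $\bxi_t[0,x]\le\bxi_t[0,\iy)$ combined with condition~3 supplies it against $d\biota_t$). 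Thus $Y^x$ is the one-dimensional Skorohod regulator of the scalar input $t\mapsto\bmu_t-\bal_t[0,x]$ on $\R_+$.

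For existence, I would set
\[
Y^x_t:=\sup_{s\in[0,t]}(\bmu_s-\bal_s[0,x])^+,
\]
then define $\bbeta_t[0,x]:=\bmu_t-Y^x_t$, $\bxi_t[0,x]:=\bal_t[0,x]-\bbeta_t[0,x]$, and $\biota_t:=\lim_{x\to\iy}Y^x_t$; conditions~1--4 and the complementarity are then routine to verify from the corresponding properties of the scalar Skorohod map. The principal hurdle is to show that these cumulative functions correspond to bona fide measures with the right monotonicity in $t$. Concretely, a Lipschitz-type bound $0\le Y^x_t-Y^{x+h}_t\le\bal_t(x,x+h]$ ensures that $x\mapsto\bbeta_t[0,x]$ is the cumulative distribution function of a positive measure and that $\bbeta_t\le\bal_t$ setwise (so $\bxi_t\ge0$), while an increment-comparison bound of the form $Y^x_t-Y^x_s\ge Y^{x+h}_t-Y^{x+h}_s$ for $s\le t$ forces $t\mapsto\bbeta_t$ to be measure-monotone, so that $\bbeta\in\D^\up_\calM$. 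Both estimates can be extracted by direct pathwise analysis of the supremum formula, using that $\bal$ itself is measure-monotone in $t$.

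Uniqueness follows at once from the uniqueness of the scalar Skorohod regulator applied level-by-level: any MVSP solution produces a nondecreasing $Y^x$ satisfying $Y^x_t\ge\bmu_t-\bal_t[0,x]$ with the complementarity derived above, so $Y^x$ must coincide with the explicit supremum; this pins down $\bxi_t[0,x]$ for every $x$, hence $\bxi$, and then $\bbeta$ and $\biota$ follow from the algebraic identities.

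Finally, measurability of $\Th$ on $\D^\up_\calM\times\D^\up_{\R_+}$ is read off from the supremum formula, which exhibits $Y^x_t$ as a measurable functional of $(\bal,\bmu)$ for every $(t,x)$. For the continuity assertion on $\C^{\uparrow,0}_\calM\times\C^\up_{\R_+}$, given a convergent sequence $(\bal^n,\bmu^n)\to(\bal,\bmu)$, the atomlessness of each $\bal_t$ makes $x\mapsto\bal_t[0,x]$ continuous, which in turn makes $(t,x)\mapsto Y^x_t$ jointly continuous and continuously dependent on the data; convergence of the cumulatives $\bbeta^n_t[0,x]\to\bbeta_t[0,x]$ at every $x$ then upgrades to Prohorov convergence of the measure-valued paths by a tightness argument combined with uniqueness. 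The main technical obstacle in this last step is transferring pointwise-in-$x$ convergence into $J_1$-convergence on $\D_\calM$, handled by a uniform-on-compacts modulus-of-continuity estimate inherited from the continuity of the data. The atomless hypothesis is essential: an atom of $\bal_t$ at the level where $\bmu_t=\bal_t[0,\cdot]$ produces a jump of $x\mapsto Y^x_t$ that can be shifted by arbitrarily small perturbations of $\bmu$, so the map genuinely fails to be continuous on all of $\C^\up_\calM\times\C^\up_{\R_+}$.
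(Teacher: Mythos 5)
Your proposal is essentially the right route, but note that this paper does not prove Proposition~\ref{prop2.1} at all: it is imported from \cite[Propositions 2.8 and 2.10]{atar-bis-kaspi-ram}, and the level-by-level link to the one-dimensional Skorohod map that you exploit is precisely the identity the paper records in Remark~\ref{R2.1}, equation \eqref{theta-gamma} (citing Lemma~2.7 of that reference). Within that route your sketch is correct: the explicit regulator $Y^x_t=\sup_{s\in[0,t]}(\bmu_s-\bal_s[0,x])^+$, the Lipschitz bound $0\le Y^x_t-Y^{x+h}_t\le\bal_t(x,x+h]$, and the increment comparison $Y^x_t-Y^x_s\ge Y^{x+h}_t-Y^{x+h}_s$ are indeed the crux, and both bounds do hold (the latter because $\bal_\cdot(x,x+h]$ is nonnegative and nondecreasing in time, by $\bal\in\D^\up_\calM$); uniqueness by appealing to the scalar Skorohod map at each level, after observing that conditions 2 and 3 combine into the $dY^x$-a.e.\ complementarity, is also sound. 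The steps you label routine do need (easy) care: verifying that $\bxi$ is an RCLL $\calM$-valued path, recovering condition~3 from the $dY^x$-statements by letting $x\to\iy$ through a countable sequence, and, in the continuity argument, the uniform-in-$n$ tail control of $\bal^n_T$ needed to pass from $Y^{x,n}$ to $\biota^n=\lim_{x\to\iy}Y^{x,n}$. One further caution: the continuity statement the paper actually uses (e.g.\ in Lemma~\ref{lem5.5}) is continuity at points of $\Cinc\times\C^\up_{\R_+}$ along arbitrary sequences in $\D^\up_\calM\times\D^\up_{\R_+}$, not merely continuity of the restriction to continuous data; your argument, which only invokes atomlessness and continuity of the limit, does deliver this, but it is worth stating explicitly.
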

In addition to $\Th$ defined above we shall use the notation $\Th_1$, $\Th_2$ and $\Th_3$
for denoting the relations
$\bxi=\Th_1(\bal, \bmu), \, \bbeta=\Th_2(\bal, \bmu)$ and $\biota=\Th_3(\bal, \bmu)$.

\begin{remark}\label{R2.1}\rm
Observe that Definition~\ref{D-mvsm} bears close resemblance with the one dimensional Skorohod map.
That is, for a given $\psi\in\D_{\R}$, a pair $(\varphi, \eta)\in\D_{\R_+}\times\D^{\uparrow}_{\R_+}$
is said to solve the one-dimensional Skorohod problem for $\psi$,
if $\varphi=\psi+\eta$ and $\varphi(t)=0$ $d\eta$-a.e.
It is well known that there exists a unique pair
$(\varphi, \eta)\in\D_{\R+}\times\D^{\uparrow}_{\R_+}$ solving this problem for
a given $\psi\in\D_{\R}$. Moreover, if we denote
$\Gam[\psi]=(\varphi, \eta)$, $\Gam_1[\psi]=\varphi, \, \Gam_2[\psi]=\eta$, this solution
is given by
$$
\varphi(t)=\Gam_1[\psi](t)=\psi(t)-\inf_{s\in[0, t]}(\psi(s)\wedge 0),
\quad \text{and}\quad \eta(t)=\Gam_2[\psi](t)=\varphi(t)-\psi(t).
$$
The relation between the two maps $\Th$ and $\Gam$ can be made explicit
\cite[Lemma~2.7]{atar-bis-kaspi-ram}, as follows.
If $\Th(\alpha, \mu)=(\xi, \beta, \iota)$ then
\begin{equation}\label{theta-gamma}
(\xi[0, x], \beta(x, \infty)+\iota)= \Gam[\alpha[0, x]-\mu], \quad \text{for all}\; x\in\R_+.
\end{equation}
\end{remark}

Next, for any measurable function $\varphi$ on $[0, H^s)\times\R_+$
we define a process $D^N(\varphi)$, which takes values in $\R$, by
\begin{equation}\label{9}
D^N_t(\varphi) = \sum_{i=-X^N_0+1}^{E^N_t}\, \sum_{s\in[0, t]} \ind_{\{\frac{d\om^N_i}{dt}(s-)>0, \frac{d\om^N_i}{dt}(s+)=0\}} \varphi(\om^N_i(s),s).
\end{equation}
From the right continuity of $\{\calF^N_t\}$ it follows that
$D^N(\varphi)$ is $\{\calF^N\}$-adapted. Also, from \eqref{2} and \eqref{9} we see that
$D^N(1)=D^N$.
Three additional measure-valued processes that will be used in our analysis are
as follows. Let $\calQ^N$ be a process having sample paths in $\D_{\calM}$,
such that for each $t\in[0, \iy)$,
$\calQ^N_t(B), B\in\calB(\R_+)$, represents the total number of customers in the queue
at time $t$ whose absolute deadline is in $B$. This can be written as
$$
\calQ^N_t(B) = \sum_{i=-X^N_0+1}^{E^N_t}\ind_{\{t<\gamma^N_i\w u^N_i\}}\ind_{B}(u^N_i).
$$
For $B\in\calB(\R_+)$, let
$$
\calK^N_t(B)=\sum_{i=-X^N_0+1}^{E^N_t} \ind_{\{0\le\gamma^N_i\le t\}}\ind_B(u^N_i)
$$
denote the number of customers admitted into service by time $t$
whose absolute deadline is in $B$. Note the relation $\calK^N_t([0,\iy))=K^N_t$.
It is easy to see that $\calK^N$ has sample paths in $\D^\uparrow_{\calM}$.
Similarly, define a process $\calR^N$ with sample paths in $\D^\uparrow_{\calM}$ by
$$
\calR^N_t(B)=\sum_{i=-X^N_0+1}^{E^N_t} \ind_{\{t\ge u_i^N,\gamma^N_i=\iy\}}\ind_{B}(u^N_i).
$$
Thus $\calR^N_t(B)$ denotes the number of customers that reneged by time $t$,
whose absolute deadlines lie in $B$. Observe the relation
$$
\calR^N_t[0, x]=R^N_{x\wedge t}, \quad \text{for all }\, t,\, x\geq 0.
$$
Let
$$
\sigma^N_t=\inf\{x\in[0, \iy)\, :\, \calQ^N_t[0, x]>0\}
$$
denote the left end of the support of $\calQ^N_t$ (defined as $+\iy$ when $\calQ^N_t=0$).
Then the fact that customers with deadline within $[0,t]$ cannot be present in the queue
at time $t$ is expressed as
\begin{equation}
\calQ^N_t[0, t] =0, \quad \text{for all }\, t\geq 0,\label{10}
\end{equation}
and the fact that reneging does not occur prior to the time of the absolute deadline implies
\begin{equation}
\int_0^\cdot \ind_{\{\sigma^N_{t-}>t\}}dR^N_t =0.\label{11}
\end{equation}
Moreover, the prioritization according to absolute deadlines can be expressed by
\begin{equation}
\int_0^\cdot \calQ^N_t[0, x]\, d\calK^N_t(x, \iy)=0,\qquad x\ge0,
\label{15}
\end{equation}
and it is also obvious that
\begin{equation}\label{41}
\int_0^\cdot \calQ^N_t[0, x]\, d\calR^N_t(x, \iy)=0,\qquad x\ge0,
\end{equation}
as if there is a customer in the queue with absolute deadline smaller than $x$ at time $t$,
no customer with higher absolute deadline may renege at time $t$.

The following result identifies crucial relations satisfied by the model,
in terms of the MSTE, \eqref{12} below, and the MVSM, $\Th$.
\begin{theorem}\label{Thm2}
Given $\varphi\in\C^1_c([0, H^s)\times\R_+)$,
the processes $\nu^N,D^N(\ph), K^N, \calQ^N, \calK^N, \calR^N, \al^N$ and $R^N$ satisfy
\begin{equation}\label{12}
\mean{\varphi(\cdot, t), \nu^N_t}
= \mean{\varphi(\cdot, 0), \nu^N_0}
+ \int_0^t\mean{\varphi_x(\cdot, s)+\varphi_s(\cdot, s), \nu^N_s} ds
-D^N_t(\varphi) + \int_{[0, t]}\varphi(0, s)dK^N_s,
\end{equation}
and
\begin{equation}\label{13}
(\calQ^N, \calK^N+\calR^N, 0) = \varTheta(\al^N, K^N+R^N).
\end{equation}
\end{theorem}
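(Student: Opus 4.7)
The plan is to treat the two identities separately. For \eqref{12}, I would follow the now-standard age-equation derivation of \cite{kaspi-ramanan}. Write $\mean{\varphi(\cdot,t),\nu^N_t}=\sum_i \varphi(\omega^N_i(t),t)\,\ind_{A_i(t)}$ with $A_i(t)=\{\omega^N_i(t)<v_i,\ t\ge\gamma^N_i\}$ indicating that customer $i$ is in service at time $t$. On any interval between jumps of $K^N$ and $D^N$ the set $\{i:A_i(t)=1\}$ is constant and each $\omega^N_i$ satisfies $\tfrac{d}{dt}\omega^N_i=1$; the chain rule then yields the continuous contribution $\int_0^t\mean{\varphi_x(\cdot,s)+\varphi_s(\cdot,s),\nu^N_s}\,ds$. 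At an admission time $s$ a new customer $j$ enters with $\omega^N_j(s)=0$, contributing a jump $+\varphi(0,s)\,\Delta K^N_s$, while at a service completion a customer $i$ exits with $\omega^N_i(s)=v_i$, contributing $-\varphi(\omega^N_i(s),s)$ per the indicator in \eqref{9}. Summing these contributions and using the compact support of $\varphi$ in the first variable (to rule out boundary effects at $H^s$) gives \eqref{12}.

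For \eqref{13}, the approach is to verify that $(\calQ^N,\calK^N+\calR^N,0)$ satisfies the four conditions of Definition \ref{D-mvsm} for the data $(\al^N,K^N+R^N)$, and then invoke the uniqueness in Proposition \ref{prop2.1}(1). Condition 3 is immediate since the last coordinate vanishes. Condition 4 reduces to $(\calK^N+\calR^N)_t[0,\iy)=K^N_t+R^N_t$, which one reads off the definitions: $\calK^N_t([0,\iy))=K^N_t$ because $\ind_{[0,\iy)}(u^N_i)=1$ for all $i$, and $\calR^N_t([0,\iy))=R^N_t$ since $\{\gamma^N_i=\iy,\ t\ge u^N_i\}$ exactly characterizes the customers that have reneged by time $t$ (equivalently, take $x\to\iy$ in the identity $\calR^N_t[0,x]=R^N_{x\w t}$ noted in the text). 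Condition 2 is the sum of \eqref{15} and \eqref{41}.

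Condition 1 is a conservation statement. Every customer counted by $\al^N_t[0,x]=\calQ^N_0[0,x]+\calE^N_t[0,x]$ is either present in the initial queue or arrives during $(0,t]$, with absolute deadline in $[0,x]$. At time $t$ such a customer is in exactly one of three mutually exclusive states---still queued, already admitted, or already reneged---yielding
\begin{equation*}
\al^N_t[0,x]=\calQ^N_t[0,x]+\calK^N_t[0,x]+\calR^N_t[0,x].
\end{equation*}
Writing $\calK^N_t[0,x]=K^N_t-\calK^N_t(x,\iy)$ and $\calR^N_t[0,x]=R^N_t-\calR^N_t(x,\iy)$ and rearranging recovers condition 1 (with $\biota_t=0$). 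I do not anticipate a substantial obstacle: both parts are largely bookkeeping. The only step requiring care is the trichotomy above, where one must remember that customers initially in service contribute neither to $\al^N$ nor to $\calK^N$ (the latter by the $\gamma^N_i\ge 0$ restriction in its definition), so the three right-hand terms exhaust and disjointly partition $\al^N_t[0,x]$.
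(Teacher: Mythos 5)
Your proposal is correct and follows essentially the same route as the paper: for \eqref{13} you verify the four conditions of Definition~\ref{D-mvsm} for $(\calQ^N,\calK^N+\calR^N,0)$ with data $(\al^N,K^N+R^N)$ via the same conservation identity (the paper's \eqref{14}) together with \eqref{15} and \eqref{41}, and conclude by the uniqueness in Proposition~\ref{prop2.1}. For \eqref{12} the paper simply cites Theorem~5.1 of \cite{kaspi-ramanan}, and your sketch reproduces the standard age-dynamics derivation behind that citation, so there is no substantive difference.
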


\begin{proof}
Equation \eqref{12} is shown in \cite[Theorem 5.1]{kaspi-ramanan}.
As for \eqref{13}, it follows from the balance equations \eqref{3}--\eqref{4}
that, for $x\in[0, \infty)$,
\begin{equation}
\calQ^N_t[0, x] = \al^N_t[0, x] - \calK^N_t[0, x]-\calR^N_t[0, x]
= \al^N_t[0, x] - K^N_t-R^N_t + \calK^N(x, \iy)+\calR^N(x, \iy).\label{14}
\end{equation}
Defining $\beta^N=\calK^N+\calR^N$ and using relations \eqref{15}, \eqref{41} and \eqref{14}
shows that $(\calQ^N, \beta^N, 0)$ satisfies all elements of the definition of the MVSP
with respect to the data $(\al^N, K^N+R^N)$. As a result, \eqref{13} holds.
\qed
\end{proof}

As already mentioned, equation \eqref{12},
that gives the dynamics of the measure-valued process $\nu^N$,
originates from \cite{kaspi-ramanan}. The first term on its RHS is simply
an initial condition. The second term
accounts for the fact that both age and time increase at rate 1. The remaining
two terms correspond to the departure and, respectively, admittance-into-service process.

LLN-scaled version of the processes introduced in this section
is attained by downscaling each of them by a factor $N$.
The resulting processes are denoted with a bar, as in
$\bar\calQ^N = N^{-1}\calQ^N$, for $N\in\N$.
Specifically, $\bar U^N =N^{-1}U^N$ for each of the processes
$U^N= X^N, Q^N, K^N, R^N, D^N,\calE^N$, $\al^N,\nu^N,\calR^N,\calK^N$.

\section{Main results}\label{main-result}
\beginsec

\subsection{Assumptions}
We introduce the main assumptions. The first two will be in force throughout this article.
Define
\begin{align*}
\calS_0&=\Big\{(\calQ_0, \nu_0, \al, x)\in \calM^0\times\calM_{1}([0, H^s))
\times\C^{\uparrow, 0}_{\calM}\times\R_+\, :\\
&\hspace{9em}\, 1-\mean{1, \nu_0}=[1-x]^+,\,
\mean{1, \nu_0}+ \calQ_0(\R_+)= x\Big\}.
\end{align*}
The assumption regarding convergence of the arrival process is as follows.
\begin{assumption}\label{Assump1}
There exists  $(\calQ_0, \nu_0, \al, X_0)\in\calS_0$ with
\begin{equation}\label{16}
\al_t[0, x] = \calQ_0[0, x] + \int_0^t\ind_{\{x\geq s\}} \la_s \pi[0, x-s] ds, \quad \forall\; t,\; x\in [0, \iy),
\end{equation}
where $\la:\R_+\to\R_+$ is locally bounded and $\pi$ is a probability measure
with $\pi\{0\}=0$, such that, as $N\to\iy$,
$\bar\al^N\Ra \al$ in $\D_{\calM}$,
$\bar{X}^N_0\Ra X_0$ in $\R_+$, and $\bar{\nu}^N_0\Ra \nu_0$ in $\calM_1([0,H^s))$.
Moreover, for any $T\in(0,\iy)$, $\sup_N\EE\bar\al^N_T[0,\iy)<\iy$.
\end{assumption}
The assumption that the limiting measure-valued process
$\al_t$ takes the form \eqref{16} corresponds to assuming
an asymptotic rate of arrival that follows the function $\la_t$ and an asymptotic
patience that is distributed according to the measure $\pi$.
In particular, the rate of arrivals is allowed to vary with time, but the patience
distribution is assumed to be fixed. Working with fixed patience distribution
allows us to keep things simple as far as this aspect of the model is concerned.
However, extending the results to time-varying patience distribution
it is not a serious obstacle, and one could allow that under suitable assumptions.
Indeed, this has been done in the single-server setting in \cite{atar-bis-kaspi-ram};
see Assumption 4.5 in \cite{atar-bis-kaspi-ram} for a general structure of
patience, and Example 5.3 of \cite{atar-bis-kaspi-ram} for a discussion
of this matter.

As for the service time distribution,
the hazard rate function of $G$ is defined by
$$h(x)=\frac{g(x)}{1-G(x)}, \quad x\in [0, H^s).$$
It is easy to see that $h$ is locally integrable on $[0, H^s)$.

\begin{assumption}\label{Assump2}
There exists $L^s<H^s$ such that the function
$h$ is either bounded or lower-semicontinuous on $(L^s, H^s)$.
\end{assumption}

Our results require that either one of the following two assumptions hold.

\begin{assumption}\label{Assump3}
\begin{enumerate}
\item[1.] The density $g$ vanishes at most at finitely many points of $[0, H^s)$.
If $\calZ$ denotes the set of zeros of $g$ then $g$ is bounded away from zero on
any compact subset of $[0, H^s)\setminus\calZ$.

\item[2.] $\nu_0\in\calM^0$.
\end{enumerate}
\end{assumption}

\begin{assumption}\label{Assump4}
There exists $\kappa_1>0$ such that $\pi[0, \kappa_1]=0$ where $\pi$ is given in \eqref{16}.
\end{assumption}

Assumption~\ref{Assump2} was introduced in \cite{kaspi-ramanan} to guarantee  the convergence of sub-sequential limits  to a solution of the MSTE.
Assumption~\ref{Assump3} is satisfied by various distributions of practical interest,
such as
exponential, log-normal, Weibull, gamma, log-logistic etc.
When it is in force, we do not make any assumption on the distribution $\pi$ of patience time.
On the other hand, Assmption~\ref{Assump4} which imposes a condition on $\pi$,
allows us to treat an extended collection (e.g., uniform over an interval
of the form $[a,b]$ where $b>a>0$, Pareto) of service time distributions.
Let us mention that a similar condition to Assumption~\ref{Assump4}
was also used in \cite[expression (2.5)]{kruk-lehoc-ram-shre} to establish
diffusion limits for the G/G/1+G EDF.

\subsection{Fluid model equations and statement of results}\label{fluid eqn}

Recall from Remark~\ref{R2.1} that $\Gam=(\Gam_1, \Gam_2)$ is the 1-dimensional Skorohod map, that is, for any locally bounded
$\psi:\R_+\to\R$, $\Gam_1[\psi](t)=\psi(t)-\inf_{s\in[0,t]}[\psi(s)\w0]$,
and $\Gam_2[\psi](t)=-\inf_{s\in[0,t]}[\psi(s)\w0]$,
$t\ge0$.
\begin{definition}[{\bf FME}]
A quadruple $(\calQ, \nu, K, R)$ in
$\C_{\calM}\times \C_{\calM_1([0, H^s))}\times\C^\up_{\R_+}\times\C^\uparrow_{\R_+}$
is said to solve the \textit{FME} with given data
$(\calQ_0, \nu_0, \al, X_0)\in\calS_0$, for $\al$ as in \eqref{16}, if
the initial conditions of $\calQ$ and $\nu$ are consistent with the data $\calQ_0$ and $\nu_0$,
one has $\int_0^t\mean{h, \nu_s}\ ds <\iy$ for all $t\ge0$,
and, letting
\begin{equation}
  \label{1000}
  E_t=\al_t[0,\iy)-\al_0[0,\iy),
\end{equation}
\begin{equation}
  \label{1001}
  Q_t=\calQ_t[0,\iy),
\end{equation}
\begin{equation}
  X_t = Q_t + \lan 1, \nu_t\ran, \label{fme2}
\end{equation}
and
\begin{equation}
  \label{1002}
  D_t=\int_0^t \lan h, \nu_s\ran ds,
\end{equation}
the following relations are satisfied.
For $\varphi\in \C^{1}_c([0, H^s)\times [0, \infty))$,
\begin{align}\label{fme5}
\lan \varphi, \nu_t\ran & = \lan \varphi(\cdot, 0), \nu_0\ran + \int_0^t \lan \varphi_x(\cdot, s)+\varphi_s(\cdot, s), \nu_s\ran ds
- \int_0^t\lan h\varphi(\cdot, s), \nu_s\ran ds + \int_0^t \varphi(0, s)dK_s,
\end{align}
and
\begin{align}
Q_0 + E_t &= Q_t + K_t +R_t,\label{fme1}
\\
Q_t &=[X_t-1]^+, \label{fme3}
\\
K_t &= \lan1, \nu_t\ran-\lan 1, \nu_0\ran + D_t \label{fme4}
\\
\calQ & = \Th_1(\al, K+R), \label{fme6}
\\
\calQ_t[0, t)&=0, \quad \forall\, t\geq 0,\label{fme7}
\\
\int_0^\cdot[1-X_s]^+dR_s & = 0, \label{fme8}
\\
\sigma_t&=t \quad dR\text{-a.e., where for } t\geq 0,
\sigma_t=\inf\,\text{\rm support}(\calQ_t). \label{fme9}
\end{align}

 \end{definition}
Note that from \eqref{fme1}, \eqref{fme2} and \eqref{fme4}, it follows that
\begin{equation}\label{101}
X_t=X_0+E_t-D_t-R_t.
\end{equation}

The FME are based on analogy to the pre-limit dynamics.
Equation \eqref{fme5} is similar to \eqref{12}, where
the departure term is represented in terms of the hazard rate function.
The equations
\eqref{fme1}, \eqref{fme2}, \eqref{fme3} and \eqref{fme4} are analogous to \eqref{3}, \eqref{AB1}, \eqref{5}
and \eqref{7}. Also,
\eqref{fme7}, \eqref{fme8} and \eqref{fme9} are analogous to \eqref{10}, \eqref{6} and \eqref{11},
respectively. We see that \eqref{fme6}
relates the MVSP with data $(\al, K+R)$ and therefore analogy comes from \eqref{13}.

Define $\calE\in\C^{\uparrow, 0}_{\calM}$ by
$$\calE_t[0, x]=\int_0^t\ind_{\{x\geq s\}} \la_s \pi[0, x-s] ds, \quad \forall\; t,\; x\in [0, \iy).$$
Then $\al=\calQ_0+\calE$. Given $t_0\in [0, \iy)$, define for $s\in[0, \iy)$,
\begin{align}\label{18}
\calE^{[t_0]}_s[0, x]& =\int_0^s\la_{t_0+p}\ind_{\{x\geq p\}}\pi[0, x-p] dp,\quad \calQ^{[t_0]}_s[0, x] =\calQ_{t_0+s}[0, t_0+x],
\\
\nu^{[t_0]}_s &=\nu_{t_0+s}, \quad K^{[t_0]}_s=K_{t_0+s}-K_{t_0}, \quad R^{[t_0]}_s=R_{t_0+s}-R_{t_0}.\nonumber
\end{align}
The proof of the following time shift lemma is straightforward.
\begin{lemma}\label{time-shift}
Suppose that $(\calQ, \nu, K, R)$ is a solution to the FME
with data $(\calQ_0, \nu_0, \al, X_0)$. Then for any
$t_0\geq 0$, $(\calQ^{[t_0]}, \nu^{[t_0]}, K^{[t_0]}, R^{[t_0]})$
satisfies the FME
with data $(\calQ_{t_0}, \nu_{t_0},\calQ_{t_0}+\calE^{[t_0]} ,X_{t_0})$.
\end{lemma}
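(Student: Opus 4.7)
The approach is direct verification: each defining relation of the FME for the shifted tuple is obtained by writing the corresponding relation for $(\calQ,\nu,K,R)$ at times $t_0$ and $t_0+s$, subtracting, and substituting $p=t_0+q$ in every time integral. A useful preliminary observation is that \eqref{fme7} at time $t_0$ forces $\supp(\calQ_{t_0})\subseteq[t_0,\iy)$, so that what the statement refers to as ``$\calQ_{t_0}$'' in the shifted data is really $\calQ_{t_0}$ shifted by $-t_0$: a well-defined atomless measure on $\R_+$, whose atomlessness is inherited from $\calQ_0$ and $\calE$ via \eqref{fme6}. Under this identification $\calQ^{[t_0]}_0$ matches the initial data, and the integrability of $\lan h,\nu^{[t_0]}_\cdot\ran$ follows from that of $\lan h,\nu_\cdot\ran$. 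The balance equations \eqref{fme1}, \eqref{fme3}, \eqref{fme4} and the defining relations \eqref{1000}--\eqref{1002} are then immediate from the subtraction, using $E^{[t_0]}_s=E_{t_0+s}-E_{t_0}$, $Q^{[t_0]}_s=Q_{t_0+s}$, $X^{[t_0]}_s=X_{t_0+s}$ and $\int_0^s\lan h,\nu^{[t_0]}_p\ran dp=D_{t_0+s}-D_{t_0}$; the relations \eqref{fme7}--\eqref{fme9} transfer routinely, using that $\sigma^{[t_0]}_s=\sigma_{t_0+s}-t_0$.

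For the MSTE \eqref{fme5}, given $\psi\in\C^1_c([0,H^s)\times[0,\iy))$, I apply the MSTE for $(\nu,K)$ to the test function $\varphi(x,t):=\psi(x,t-t_0)\chi(t)$, where $\chi\in\C^1_c(\R_+)$ vanishes on $[0,t_0-\eps]$ and equals $1$ on an interval containing $[t_0,t_0+s]$ ($\psi$ may be extended arbitrarily for $t<t_0$ since $\chi$ kills it there). Subtracting the resulting identity at $t_0+s$ and $t_0$ and using $\chi\equiv 1$, $\chi'\equiv 0$ on $[t_0,t_0+s]$ yields \eqref{fme5} for $(\nu^{[t_0]},K^{[t_0]})$ applied to $\psi$ after the change of variable $p=t_0+q$.

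The substantive step is \eqref{fme6}. Starting from $(\calQ,\calK+\calR,0)=\Th(\al,K+R)$, I propose the candidate
\[
\bbeta^{[t_0]}_s(B):=(\calK+\calR)_{t_0+s}(B+t_0)-(\calK+\calR)_{t_0}(B+t_0),\quad B\in\calB(\R_+),\qquad \biota^{[t_0]}\equiv 0,
\]
and verify that $(\calQ^{[t_0]},\bbeta^{[t_0]},0)$ satisfies the four conditions of Definition~\ref{D-mvsm} relative to the data $(\al^{[t_0]},K^{[t_0]}+R^{[t_0]})$. Condition~3 is vacuous; condition~4 is the difference of the original condition~4 at $t_0+s$ and $t_0$; condition~2 follows from the original condition~2 (evaluated at $t_0+x$) through the time-shift of the Lebesgue-Stieltjes measure on $(t_0,\iy)$. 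The one calculation requiring care is condition~1: combining the original condition~1 at $t_0+s$ and $t_0$ (both evaluated at the point $t_0+x$) with the elementary identity $\al_{t_0+s}[0,t_0+x]-\al_{t_0}[0,t_0+x]=\calE^{[t_0]}_s[0,x]$ (immediate from the explicit form \eqref{16} of $\al$), and using $\al^{[t_0]}_s[0,x]=\calQ_{t_0}[0,t_0+x]+\calE^{[t_0]}_s[0,x]$ under the identification above, yields condition~1 for the shifted problem. Uniqueness of the MVSP (Proposition~\ref{prop2.1}) then identifies $\Th(\al^{[t_0]},K^{[t_0]}+R^{[t_0]})$ with $(\calQ^{[t_0]},\bbeta^{[t_0]},0)$, giving \eqref{fme6}. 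This MVSP bookkeeping is the main point of care, but involves only the explicit identities above and no further analysis.
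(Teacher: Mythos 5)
Your overall plan is sound and the proof can be made to work, but the MVSP bookkeeping at the end is not quite as routine as you claim, and there are two places where you glide past steps that require justification.

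First, you write ``Starting from $(\calQ,\calK+\calR,0)=\Th(\al,K+R)$''. The FME only asserts $\calQ=\Th_1(\al,K+R)$. The existence of the full triple $(\calQ,\bbeta,\biota)=\Th(\al,K+R)$ comes from Proposition~\ref{prop2.1}, and calling the second coordinate $\calK+\calR$ is fine as notation, but you are also asserting $\biota\equiv 0$, which is not part of the FME definition and needs a (short) argument. It does hold: sending $x\to\iy$ in \eqref{theta-gamma} gives $Q=\Gam_1[Q_0+E-K-R]$ and $\iota=\Gam_2[Q_0+E-K-R]$, while \eqref{fme1} gives $Q=Q_0+E-K-R$ directly, so $\iota=Q-(Q_0+E-K-R)=0$.

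Second, and more substantively, your claim that ``condition~4 is the difference of the original condition~4 at $t_0+s$ and $t_0$'' is false as stated. That difference gives $\bbeta_{t_0+s}[0,\iy)-\bbeta_{t_0}[0,\iy)=K^{[t_0]}_s+R^{[t_0]}_s$, whereas you need $\bbeta^{[t_0]}_s[0,\iy)=\bbeta_{t_0+s}[t_0,\iy)-\bbeta_{t_0}[t_0,\iy)$ to equal the same thing. The two differ by $\bbeta_{t_0+s}[0,t_0)-\bbeta_{t_0}[0,t_0)$, so you must additionally show that $t\mapsto\bbeta_t[0,t_0)$ is constant on $[t_0,\iy)$. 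This is true: from \eqref{theta-gamma}, $\Gam_2[\psi]=\Gam_1[\psi]-\psi$ gives $\bbeta_t[0,x]=\al_t[0,x]-\calQ_t[0,x]$, and for $t\geq t_0$, \eqref{fme7} yields $\calQ_t[0,t_0)=0$ while \eqref{16} shows $\al_t[0,t_0]$ is constant in $t\geq t_0$. But none of this is a ``difference of condition~4''; it is exactly the piece of the argument that requires care.

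As an aside, a cleaner route to \eqref{fme6} avoids proposing $\bbeta^{[t_0]}$ altogether: by \eqref{theta-gamma}, \eqref{fme6} is equivalent to $\calQ_t[0,x]=\Gam_1[\al[0,x]-K-R](t)$ for every $x$, and then the shifted version follows from the one-dimensional restart identity $\Gam_1[\psi](t_0+s)=\Gam_1\big[\Gam_1[\psi](t_0)+\psi(t_0+\cdot)-\psi(t_0)\big](s)$ applied to $\psi=\al[0,t_0+x]-K-R$, together with $\al_{t_0+s}[0,t_0+x]-\al_{t_0}[0,t_0+x]=\calE^{[t_0]}_s[0,x]$. This gives \eqref{fme6} for the shifted tuple in one line, with no need to identify $\biota$ or to track the mass of $\bbeta$ below level $t_0$. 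The MSTE step and the remaining balance equations in your write-up are fine, and the identification of $\calQ_{t_0}$ with its translate (justified by \eqref{fme7}) is a good observation to make explicit.
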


Let us also recall the following result from \cite[Theorem 4.1]{kaspi-ramanan}.
\begin{lemma}\label{lem3.2}
If $\nu$ satisfies \eqref{fme5} then for any $f\in \C_c(\R_+)$ we have
$$\int_{[0, H^s)} f(x) \nu_t(dx) = \int_{[0, H^s)} f (x+t)\frac{1-G(x+t)}{1-G(x)}\nu_0(dx) + \int_0^t f(t-s)(1-G(t-s))dK_s.$$
\end{lemma}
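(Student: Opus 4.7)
The plan is to apply the MSTE \eqref{fme5} with a carefully chosen test function $\varphi$ that annihilates the bulk integrand $\varphi_x+\varphi_s-h\varphi$, so that only the initial data and the boundary term involving $K$ survive on the right-hand side. The correct ansatz is the solution of the backward transport equation $\varphi_x+\varphi_s=h\varphi$ on $[0,H^s)\times[0,t]$ with terminal condition $\varphi(\cdot,t)=f$. Solving along characteristics $s\mapsto(y+s,s)$ gives the ODE $\tfrac{d}{ds}\varphi(y+s,s)=h(y+s)\varphi(y+s,s)$, which integrates to
\begin{equation*}
\varphi(y,s)=f(y+t-s)\exp\!\Bigl(-\int_y^{y+t-s}h(u)\,du\Bigr)
=f(y+t-s)\,\frac{1-G(y+t-s)}{1-G(y)},
\end{equation*}
where the last equality uses $h=-\tfrac{d}{du}\log(1-G)$. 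Substituting this $\varphi$ into \eqref{fme5} and noting that $\varphi(\cdot,t)=f$, $\varphi(y,0)=f(y+t)\tfrac{1-G(y+t)}{1-G(y)}$ and $\varphi(0,s)=f(t-s)(1-G(t-s))$ (since $G(0)=0$) produces exactly the claimed identity.

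The main technical obstacle is that \eqref{fme5} is a priori only stated for $\varphi\in\C^1_c([0,H^s)\times[0,\iy))$, whereas our candidate $\varphi$ may fail to be $\C^1$ in $x$ (the hazard rate $h$ is only locally integrable, and under Assumption~\ref{Assump2} only bounded or lower-semicontinuous near $H^s$) and is not necessarily compactly supported within $[0,H^s)$. I would deal with this by an approximation argument in two layers. First, since $f$ has compact support, $\varphi(\cdot,s)$ is spatially supported in a fixed compact subset of $[0,H^s)$ for all $s\in[0,t]$ provided $\mathrm{supp}(f)\subset[0,H^s-t)$; the general case reduces to this one by the finite-mass condition $\lan1,\nu_s\ran\le1$ together with a truncation of $f$ so that $\mathrm{supp}(f)+t\subset[0,H^s)$, using that $\int_0^t\lan h,\nu_s\ran ds<\iy$ controls the contribution of the boundary region.

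Second, to handle the regularity of $h$, I would mollify: replace $G$ by a smooth approximation $G_\eps$ (obtained e.g.\ by convolution of $g$ with an approximate identity) with hazard rate $h_\eps$, and build $\varphi_\eps(y,s)=f(y+t-s)\tfrac{1-G_\eps(y+t-s)}{1-G_\eps(y)}$. Then $\varphi_\eps\in\C^1_c$, and it satisfies $\varphi_{\eps,x}+\varphi_{\eps,s}=h_\eps\varphi_\eps$ identically. Plugging $\varphi_\eps$ into \eqref{fme5} yields
\begin{equation*}
\lan\varphi_\eps(\cdot,t),\nu_t\ran
=\lan\varphi_\eps(\cdot,0),\nu_0\ran
+\int_0^t\lan(h_\eps-h)\varphi_\eps(\cdot,s),\nu_s\ran ds
+\int_0^t\varphi_\eps(0,s)\,dK_s.
\end{equation*}
As $\eps\downarrow0$, $G_\eps\to G$ pointwise and $\varphi_\eps\to\varphi$ uniformly on compact subsets of $[0,H^s)\times[0,t]$; the error term $\int_0^t\lan(h_\eps-h)\varphi_\eps,\nu_s\ran ds$ vanishes because $h_\eps\to h$ in $L^1_{\rm loc}$ and $\int_0^t\lan h,\nu_s\ran ds<\iy$ (this is where Assumption~\ref{Assump2} and the assumed integrability are used). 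Passing to the limit delivers the identity, completing the proof.
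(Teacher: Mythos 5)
Your test-function ansatz is exactly the right one, and the algebra is correct: $\varphi(y,s)=f(y+t-s)\frac{1-G(y+t-s)}{1-G(y)}$ satisfies $\varphi_x+\varphi_s=h\varphi$, with $\varphi(\cdot,t)=f$, $\varphi(y,0)=f(y+t)\frac{1-G(y+t)}{1-G(y)}$ and $\varphi(0,s)=f(t-s)(1-G(t-s))$, so that plugging it into \eqref{fme5} kills the bulk term and leaves precisely the claimed identity. Note that the paper does not prove this lemma at all; it quotes it from Kaspi--Ramanan (Theorem 4.1), whose proof is based on the same family of test functions. The entire content of the lemma therefore lies in justifying the use of this non-$\C^1_c$ test function, and that is where your argument has genuine gaps.

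The main one is the limiting step for the error term: you claim $\int_0^t\lan(h_\eps-h)\varphi_\eps(\cdot,s),\nu_s\ran\,ds\to0$ because $h_\eps\to h$ in $L^1_{\mathrm{loc}}$ and $\int_0^t\lan h,\nu_s\ran\,ds<\iy$. But $L^1_{\mathrm{loc}}$ convergence is convergence with respect to Lebesgue measure, whereas here the integration is against $\nu_s(dx)\,ds$, and a priori (knowing only that $\nu$ solves \eqref{fme5}) nothing prevents the occupation measure $\int_0^t\nu_s(\cdot)\,ds$ from being singular with respect to Lebesgue measure -- $\nu_0$, hence the $\nu_s$, may carry atoms, and the lemma is stated without Assumption~\ref{Assump3}. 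The finiteness of $\int_0^t\lan h,\nu_s\ran\,ds$ gives integrability of $h$ against this measure but does not upgrade Lebesgue-a.e.\ or $L^1_{\mathrm{loc}}$ convergence of $h_\eps$ into convergence of these integrals; to close this one needs either a pointwise, dominated (or monotone, e.g.\ via truncated hazard rates $h\wedge n$) approximation of $h$, or a separate argument giving regularity of the occupation measure -- which is essentially what the cited proof supplies. A second, more easily repaired, flaw is the reduction to $\mathrm{supp}(f)+t\subset[0,H^s)$: when $H^s<\iy$, the measure $\nu_t$ and the integrator $dK$ can charge the region corresponding to ages in $[H^s-t,H^s)$, so the truncated identity does not imply the general one; moreover the strong condition is unnecessary, since $\mathrm{supp}(f)\subset[0,c]$ with $c<H^s$ already forces $\varphi(\cdot,s)$ to be supported in $[0,c]$ for all $s\le t$. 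The correct reduction is to truncate $f$ at levels $c_n\uparrow H^s$ and pass to the limit using $\frac{1-G(x+t)}{1-G(x)}\le1$ and dominated convergence with respect to the finite measures $\nu_t$, $\nu_0$ and $dK$. Finally, a small omission: $f$ is only continuous, so $\varphi_\eps$ is not $\C^1$ unless you also mollify $f$ (harmless, but it should be said).
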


The first main result of this article states that solutions to the FME are unique.

\begin{theorem}\label{Thm1}
Let either Assumption~\ref{Assump3} or \ref{Assump4} hold.
Then given $(\calQ_0, \nu_0, \al, X_0)\in\calS_0$, there exists at most one
solution $(\calQ, \nu, K, R)$ to the FME.
\end{theorem}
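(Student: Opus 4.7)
The plan is to combine the time-shift lemma with a local contraction argument. Suppose two solutions $(\calQ^{(i)}, \nu^{(i)}, K^{(i)}, R^{(i)})$, $i=1,2$, exist for the same data $(\calQ_0, \nu_0, \al, X_0)\in\calS_0$, and set
$\tau = \inf\{t\geq 0 : (\calQ^{(1)}_t,\nu^{(1)}_t,K^{(1)}_t,R^{(1)}_t) \neq (\calQ^{(2)}_t,\nu^{(2)}_t,K^{(2)}_t,R^{(2)}_t)\}$.
All components live in continuous path spaces, so both solutions coincide at $\tau$. If $\tau<\iy$, Lemma~\ref{time-shift} lets me restart at $\tau$ and thereby reduce the theorem to showing the following local statement: any two solutions with identical data must agree on some $[0,\eta]$. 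Once that is in hand, choosing $\eta$ so small that uniqueness holds on $[0,\eta]$ contradicts the definition of $\tau$.

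For the local statement, I would parametrize everything by the admission process $K$. Applying Lemma~\ref{lem3.2} with $f\equiv 1$ expresses $\lan 1,\nu_t\ran$ as a linear Volterra functional of $K$ with kernel $1-G$, and a similar computation (taking $f=h$, or integrating \eqref{fme5} against $h$) gives $D_t$ as a linear Volterra functional of $K$ with kernel $G$. Coupling these with the non-idling identity \eqref{fme3}, i.e.\ $\lan 1,\nu_t\ran = X_t\w 1$, with the balance identity \eqref{101}, and with \eqref{fme6}--\eqref{fme9}, produces a coupled system for $(K,R)$. In the \emph{overloaded} regime $\lan 1,\nu_t\ran=1$ forces $K_t=D_t$, which becomes a single Volterra equation of the second kind for $K$; in the \emph{underloaded} regime $R$ is locally constant and $\lan 1,\nu_t\ran=X_t$ again determines $K$ explicitly via the balance. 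In either case, once $K$ is pinned down, $\nu$ is recovered from the MSTE, and $R$ is identified through \eqref{fme6} and the reneging support condition \eqref{fme9}: the requirement that $\calQ=\Th_1(\al,K+R)$ satisfies $\sigma_t=t$ $dR$-a.e.\ together with the MVSM structure and the continuity of $\Th$ (Proposition~\ref{prop2.1}) forces $R$ to be the canonical complement of $K$ in the MVSM input on $[0,\eta]$. A Banach fixed-point argument on a small interval then closes the contraction.

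The role of Assumptions~\ref{Assump3} and \ref{Assump4} is to supply, in two complementary ways, the regularity that makes the above contraction go through. Under Assumption~\ref{Assump3}, the density $g$ is bounded away from $0$ on compact subsets of $[0,H^s)\setminus\calZ$ and $\nu_0\in\calM^0$; this gives Lipschitz control of the Volterra kernels appearing in $K\mapsto \lan 1,\nu\ran$ and $K\mapsto D$, and prevents $\nu$ from concentrating on the zero set of $g$ on short time intervals, so the $K$-equation is genuinely contractive. Under Assumption~\ref{Assump4}, the gap $\pi[0,\kap_1]=0$ implies $\calE_t[0,t+\kap_1)=\calE_t[0,\kap_1)$ is zero for $t<\kap_1$, so on any interval $[0,\eta]$ with $\eta<\kap_1$ the front of the queue is carved entirely out of $\calQ_0$ and $R$ depends only on the data $\calQ_0$ and on $K$; this decouples the reneging dynamics from the arrival dynamics on a short interval and reduces the coupled system to a closed equation in $K$ alone, to which the same Volterra uniqueness applies.

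The main obstacle, as flagged in the introduction, is that the effective service rate $\lan h,\nu\ran$ may become arbitrarily small even while $X_t>1$, so neither the minimality argument of \cite{atar-bis-kaspi-ram} nor the monotonicity of reneging count in queue length used in \cite{kang-ramanan} is available. Consequently, the most delicate step will be to produce a genuine Lipschitz estimate of $R$ as a function of $K$ that survives this degeneracy; this is precisely the point at which the proofs under Assumptions~\ref{Assump3} and \ref{Assump4} diverge and must be carried out separately, which is why the theorem is stated with the alternative hypothesis rather than a single unified one. A secondary technical hurdle will be handling the regime transition $X_t=1$, where the non-idling equation changes form; this can be dealt with by observing that the Volterra kernel is continuous across the transition and that the identifications of $K$ in the two regimes match at the boundary.
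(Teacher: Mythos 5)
Your overall architecture (reduce to a local statement via Lemma~\ref{time-shift}, use the Volterra/renewal structure of $K\mapsto\lan 1,\nu\ran$ and $K\mapsto D$ from Lemma~\ref{lem3.2}, recover $\calQ$ from the MVSM) matches the paper's, but the proposal has a genuine gap at exactly the point you yourself flag as ``the most delicate step'': the identification, or Lipschitz control, of $R$ in terms of $K$ (equivalently $D$). You assert that \eqref{fme6}, \eqref{fme9}, ``the MVSM structure and the continuity of $\Th$ (Proposition~\ref{prop2.1})'' force $R$ to be ``the canonical complement of $K$ in the MVSM input,'' but this does not follow: $\Th$ acts on the pair $(\al,K+R)$, so \eqref{fme6} constrains only the sum $K+R$, and continuity of $\Th$ cannot separate $R$ from $K$. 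Likewise, in your overloaded regime the renewal equation pins down $K$, but the balance \eqref{fme1} then determines only $Q+R$, not $R$; and since $X$ (hence $\lan1,\nu\ran=X\w1$ and hence $K$ via \eqref{fme4}) feeds back on $R$, the system is genuinely coupled and your Banach iteration cannot close without a quantitative $R$-versus-$D$ estimate. This estimate is the core of the paper's proof under Assumption~\ref{Assump3}: one first proves a propagation-of-emptiness lemma (Lemma~\ref{lem5}) using the fact that, when the pool is full, $\lan h,\nu_s\ran\ge m>0$ for a compact set carrying most of the mass of $\nu$ (this, not ``Lipschitz control of the Volterra kernels,'' is the actual role of Assumption~\ref{Assump3} together with the atomlessness of $\nu_0$ via Lemma~\ref{lem1}), so that admissions outpace the inflow $\la_s\pi[0,\del]$ of imminent deadlines; this is then combined with the support condition \eqref{fme9} in a contradiction argument to obtain $|R^1_s-R^2_s|\le\|D^1-D^2\|_s$ (Lemma~\ref{lem4}), which is what makes the contraction $\|\Del K\|_\del\le 3\|\Del D\|_\del\le \tfrac{3}{10}\|\Del K\|_\del$ work.

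Under Assumption~\ref{Assump4} your heuristic (only $\calQ_0$-mass can renege on a short horizon) points in the right direction, but the actual mechanism is again missing: the paper shows $R=\Gam^b_2[Q_0+E-K]$ on an initial interval via the Skorohod map on the time-varying domain with boundary $b_t=H(t,t)$ (Lemma~\ref{lem4.9}), identifies $\sigma_t$ explicitly through $K+R=\calQ_0[0,\tilde\sigma_t]$, proves that reneging ceases entirely after the time $\varrho$, and only then invokes the renewal equation during the busy period and the no-reneging uniqueness of \cite{kaspi-ramanan} afterwards. None of these steps is a routine consequence of the ingredients you list, and without them (or substitutes) the plan does not yield a proof; it is an outline of the correct skeleton with the two assumption-specific hearts of the argument left open.
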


Our second main result asserts that the FME characterize the limit.

\begin{theorem}\label{Thm3}
Suppose that Assumptions~\ref{Assump1}--\ref{Assump2} and either Assumption~\ref{Assump3}
or \ref{Assump4} hold. Then there exists
a unique solution $(\calQ, \nu, K, R)$ to the FME and the sequence $(\bar\calQ^N, \bar\nu^N, \bar{K}^N, \bar{R}^N)$ converges in distribution, as $N\to\iy$,
to $(\calQ, \nu, K, R)$.
\end{theorem}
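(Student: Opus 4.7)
The plan is to reduce Theorem~\ref{Thm3} to the uniqueness statement of Theorem~\ref{Thm1} by the standard tightness--plus--characterization scheme. Since Theorem~\ref{Thm1} guarantees at most one FME solution, it suffices to show (i) that the sequence $(\bar\calQ^N, \bar\nu^N, \bar K^N, \bar R^N)$ is tight in the appropriate product space and (ii) that every subsequential weak limit $(\calQ, \nu, K, R)$ solves the FME with data $(\calQ_0, \nu_0, \al, X_0)$. Existence of a solution will then be a free byproduct, and uniqueness will upgrade subsequential convergence to full convergence.

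For tightness (step (i)), I would carry along the augmented vector $(\bar\al^N, \bar\nu^N, \bar K^N, \bar R^N, \bar D^N, \bar\calQ^N, \bar\calK^N, \bar\calR^N, \bar X^N, \bar Q^N)$. Assumption~\ref{Assump1} gives convergence of $\bar\al^N$, $\bar\nu^N_0$ and $\bar X^N_0$, together with a uniform $L^1$ bound on $\bar\al^N_T[0,\iy)$. Combining the balance equation \eqref{3} with the ceiling $\lan 1,\bar\nu^N_t\ran\le 1$ yields $\bar K^N_t+\bar R^N_t\le \bar Q^N_0+\bar\al^N_t[0,\iy)$, so the monotone processes $\bar K^N, \bar R^N, \bar D^N$ are stochastically bounded on compact intervals. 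Tightness of these monotone processes follows from an oscillation estimate inherited from $\bar\al^N$; tightness of $\bar\nu^N$ in $\D_{\calM_1([0,H^s))}$ is then obtained from the MSTE \eqref{12} by testing against a convergence-determining class, along the lines of \cite{kaspi-ramanan} (Assumption~\ref{Assump2} is used here to control the contribution of $h$ near $H^s$). Tightness of $\bar\calQ^N, \bar\calK^N, \bar\calR^N$ then follows from the measurability of $\Th$ and the tightness of the MVSM data $(\bar\al^N, \bar K^N+\bar R^N)$ via \eqref{13}.

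For characterization (step (ii)), by Skorohod representation I would work with almost sure convergence along a subsequence. The key input is the continuity statement of Proposition~\ref{prop2.1}: the limit $\al$ lies in $\C_{\calM}^{\up, 0}$ by Assumption~\ref{Assump1}, and the limit of $\bar K^N+\bar R^N$ is continuous because the pre-limit jumps are of size $N^{-1}$, so $\Th$ applied to the limiting data yields \eqref{fme6} from \eqref{13}. The balance relations \eqref{fme1}, \eqref{fme2}, \eqref{fme3}, \eqref{fme4} pass to the limit directly from \eqref{3}--\eqref{5}, where for \eqref{fme8} one uses the non-idling condition \eqref{5} together with the fact that $\bar R^N$ is flat whenever $1-\bar X^N>0$. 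Condition \eqref{fme7} passes from \eqref{10}, and the deadline constraint \eqref{fme9} follows from \eqref{11} combined with \eqref{41} so that $d\bar R^N_t$-a.e. the support of $\bar\calQ^N_{t-}$ begins at $t$. The subtle identification is that
\[
\bar D^N_t(\ph) \longrightarrow \int_0^t \lan h\ph(\cdot,s), \nu_s\ran ds,
\]
which plugged into the scaled \eqref{12} yields \eqref{fme5}; this proceeds along the martingale decomposition of the empirical departure process developed in \cite{kaspi-ramanan}, using local integrability of $h$ and Assumption~\ref{Assump2}.

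The main obstacle I expect is twofold. First, ensuring that the limiting $\calQ_t$ is atomless for all $t$ (so that $\Th$ is genuinely continuous at the limit point) requires controlling the regularity of the arriving mass and its propagation through the buffer; this is precisely where either Assumption~\ref{Assump3}, via regularity of the age profile, or Assumption~\ref{Assump4}, via the deadline gap $\pi[0,\kappa_1]=0$, becomes indispensable. Second, verifying the limiting nonidling and prioritization conditions \eqref{fme8}--\eqref{fme9} demands care because the sets on which $\bar R^N$ charges are not closed under weak limits; I would address this by writing these relations in integrated form against continuous test functions and then passing to the limit, exploiting the fact that $[1-\bar X^N]^+\to [1-X]^+$ uniformly on compacts and that $\sigma^N_{t-}\to\sigma_t$ at continuity points of $\sigma$. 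Once both steps are in place, Theorem~\ref{Thm1} closes the argument and delivers the stated convergence.
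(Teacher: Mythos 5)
Your overall architecture matches the paper's: establish tightness of the scaled processes, pass to a subsequential limit via Skorohod representation, show that the limit solves the FME, and invoke Theorem~\ref{Thm1} to conclude convergence and existence. The tightness step is broadly correct in outline (martingale bounds, $C$-tightness via oscillation estimates, continuity of $\Th$ for $\bar\calQ^N$), and most of the balance relations \eqref{fme1}--\eqref{fme8} pass to the limit essentially as you describe. However, there are two points where the proposal departs from what actually works, and one of them is a genuine gap.

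First, a minor misattribution: you claim that atomlessness of the limiting $\calQ_t$ ``requires controlling the regularity of the arriving mass and its propagation through the buffer'' and that Assumption~\ref{Assump3} or~\ref{Assump4} is indispensable for this. In fact $\calQ_t$ being atomless is immediate from Assumption~\ref{Assump1} alone: since $\calQ^N_t[a,b]\le\al^N_t[a,b]$ for all $a<b$, the limit satisfies $\calQ_t\le\al_t$, and $\al_t$ is atomless because $\al\in\C_\calM^{\up,0}$. No role for Assumptions~\ref{Assump3}/\ref{Assump4} here, and in particular \eqref{fme6} follows directly from Proposition~\ref{prop2.1} and Theorem~\ref{Thm2}.

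The genuine gap is your treatment of the prioritization constraint \eqref{fme9}, which is by far the hardest part of the characterization. You correctly observe that the sets on which $\bar R^N$ charges do not pass to the limit under weak convergence, but your proposed fix --- ``writing these relations in integrated form against continuous test functions'' and ``exploiting the fact that $\sigma^N_{t-}\to\sigma_t$ at continuity points of $\sigma$'' --- would not work. The infimum of the support of a measure is not a weakly continuous functional (it can jump down in the limit when mass concentrates near the left edge), so there is no such convergence to exploit, and no natural choice of test functions against which \eqref{11} integrates into \eqref{fme9}. The actual argument is considerably more delicate: one fixes $\hat\delta>0$, uses the Section Theorem (after checking optionality of the relevant set via right-continuity of $t\mapsto\calQ_t[0,t+a]$) to produce a random time $\tau$ at which $\sigma_\tau>\tau+\hat\delta$ but $R$ is increasing at $\tau$, and then derives a contradiction by bounding $\bar R^N_{\tau+\eps}-\bar R^N_\tau$ via the a~priori pre-limit estimate $\bar R^N_b-\bar R^N_a\le\bar\calQ^N_a(a,b]+\bar\al^N_b(a,b]-\bar\al^N_a(a,b]$, partitioning $(\tau,\tau+\eps]$ and passing $N\to\infty$ then the mesh to zero, combined with the crucial step of showing $\calQ_s[0,\tau+\eps]=0$ on $[\tau,\tau+\eps]$. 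This last step is exactly where Assumption~\ref{Assump3} (via the lower bound on $h$ and the argument of Lemma~\ref{lem5}) or Assumption~\ref{Assump4} (via the deadline gap $\pi[0,\kappa_1]=0$, which makes the newly arriving deadline mass in a short window vanish) enters. Without a correct replacement for your ``$\sigma^N\to\sigma$'' claim, the characterization step is incomplete.
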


We prove Theorem~\ref{Thm1} in Section~\ref{sec-fme}. The proof
is established by two propositions, Proposition~\ref{Thm4.4} and \ref{Thm4.5}.
Sections \ref{sec-tight} and \ref{sec-char} are devoted to prove Theorem~\ref{Thm3},
by showing tightness of the scaled processes in Section \ref{sec-tight},
and the characterization of limits in terms of the FME in Section~\ref{sec-char}.

In \cite[Remark 2.2]{atar-bis-kaspi}, the authors establish the fluid limit of $G/M/N+G$ queueing model governed by EDF
discipline where the patience time distribution $\pi$ is assumed to
have the property that $x\to\pi[0, x]$ is strictly increasing on the support of $\pi$. In view of Theorem~\ref{Thm1} and Theorem~\ref{Thm3} we have
the following corollary that generalizes the results in \cite{atar-bis-kaspi}.

\begin{corollary}\label{Cor1}
Let the service requirement distribution be given by an exponential distribution. Assume that Assumptions~\ref{Assump1} holds and $\nu_0$ does not have atoms.
Then $(\bar\calQ^N, \bar\nu^N, \bar{K}^N, \bar{R}^N)$ converges in distributions, as $N\to\iy$,
to $(\calQ, \nu, K, R)$ where $(\calQ, \nu, K, R)$ is the unique solution to the FME.
\end{corollary}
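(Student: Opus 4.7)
The plan is to observe that Corollary~\ref{Cor1} is a direct specialization of Theorem~\ref{Thm3}: the entire task reduces to verifying the hypotheses of that theorem in the exponential-service setting. Assumption~\ref{Assump1} is given by hypothesis, so I only need to verify Assumption~\ref{Assump2} and one of Assumptions~\ref{Assump3}, \ref{Assump4}. I will use Assumption~\ref{Assump3}, which is compatible with exponential service and places no restriction whatsoever on the patience distribution $\pi$.

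For an exponential service time with rate $\mu>0$, we have $g(x)=\mu e^{-\mu x}$ and $1-G(x)=e^{-\mu x}$ for $x\ge 0$, so $H^s=\infty$ and the hazard rate is constant:
\[
h(x)=\frac{g(x)}{1-G(x)}=\mu,\qquad x\in [0,H^s).
\]
Being bounded, $h$ satisfies Assumption~\ref{Assump2} trivially on any interval $(L^s,H^s)$ with $L^s<H^s$.

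For Assumption~\ref{Assump3}, I note that $g(x)>0$ for every $x\in [0,H^s)=[0,\infty)$, so the zero set $\calZ$ of $g$ is empty. Since $g$ is continuous and strictly positive on $[0,H^s)$, the extreme value theorem shows that $g$ is bounded away from zero on every compact subset of $[0,H^s)\setminus\calZ=[0,\infty)$, which verifies part~1. Part~2, namely $\nu_0\in\calM^0$, is exactly the hypothesis that $\nu_0$ has no atoms.

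With Assumptions~\ref{Assump1}, \ref{Assump2} and \ref{Assump3} all in force, Theorem~\ref{Thm3} applies and directly yields both the existence of a unique solution $(\calQ,\nu,K,R)$ to the FME and the convergence in distribution $(\bar\calQ^N,\bar\nu^N,\bar K^N,\bar R^N)\To (\calQ,\nu,K,R)$, which is the statement of the corollary. There is no genuine obstacle in the proof itself; the heavy lifting is carried out by Theorem~\ref{Thm3}. The substantive content of the corollary lies rather in the comparison with \cite{atar-bis-kaspi}, where strict monotonicity of $x\mapsto\pi[0,x]$ on the support of $\pi$ was required: the present framework dispenses with that condition entirely, since Assumption~\ref{Assump3} imposes no restriction on $\pi$.
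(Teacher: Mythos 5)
Your proposal is correct and matches the paper's treatment: the paper states Corollary~\ref{Cor1} as an immediate consequence of Theorems~\ref{Thm1} and~\ref{Thm3}, which amounts exactly to your verification that exponential service gives a bounded (constant) hazard rate for Assumption~\ref{Assump2} and a nowhere-vanishing density for Assumption~\ref{Assump3}(1), with $\nu_0\in\calM^0$ supplied by hypothesis. Your closing remark about dispensing with the strict monotonicity of $x\mapsto\pi[0,x]$ required in \cite{atar-bis-kaspi} is also precisely the point the paper makes when introducing the corollary.
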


We note that when the patience time distribution is deterministic then G/G/N+D queueing models under EDF policy is same as the queueing model
under FCFS (or FIFO) scheduling policy. \cite{kang-ramanan} studies the fluid limit for G/G/N+G queueing systems working under FCFS discipline where
it assumes that the patience time distribution has density.
This is further generalized in \cite{zuniga} to a more general class of distributions
but with continuous patience time distribution.
Thus in particular, it does not cover the case of deterministic patience time. Since our
Assumption~\ref{Assump4} includes deterministic patience time we have,

\begin{corollary}\label{Cor2}
Let Assumption~\ref{Assump1}-\ref{Assump2} hold. Then for a queueing model G/G/N+D working under FCFS discipline we have LLN limit and the unique limit can be characterised by the FME \eqref{1000}-\eqref{fme9}.
\end{corollary}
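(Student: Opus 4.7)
The plan is to reduce Corollary~\ref{Cor2} directly to Theorem~\ref{Thm1} and Theorem~\ref{Thm3} via two simple observations, one about scheduling equivalence and one about the patience-time assumption.

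First, I would observe that a deterministic patience time means $\pi=\delta_\kappa$ for some constant $\kappa>0$. Then Assumption~\ref{Assump4} is trivially satisfied: pick any $\kappa_1\in(0,\kappa)$, so that $\pi[0,\kappa_1]=0$. Consequently, together with Assumptions~\ref{Assump1}--\ref{Assump2}, the hypotheses of Theorem~\ref{Thm1} and Theorem~\ref{Thm3} are in force (under the Assumption~\ref{Assump4} branch), so a unique solution $(\calQ,\nu,K,R)$ to the FME exists.

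Second, I would argue that under deterministic patience, the G/G/N+D system under FCFS is indistinguishable on a path-by-path basis from the same system under EDF. Recall that under EDF, when a server frees up and the queue is nonempty, the customer with the smallest absolute deadline $u_i^N=a_i^N+r_i^N$ is admitted, with ties broken by order of arrival. When all $r_i^N$ equal the same constant $\kappa$, the map $i\mapsto u_i^N$ is a strictly increasing affine function of $i\mapsto a_i^N$, so the customer with the earliest deadline is precisely the customer with the earliest arrival time. Hence the server-admission sequence coincides with the FCFS admission sequence. Similarly, the reneging rule (departure at time $u_i^N=a_i^N+\kappa$ if not yet admitted) is identical under both disciplines. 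Thus all of the processes $\calQ^N,\nu^N,K^N,R^N$ under FCFS agree with those under EDF.

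Combining these two observations, Theorem~\ref{Thm3} (applied to the EDF version of the system) yields $(\bar\calQ^N,\bar\nu^N,\bar K^N,\bar R^N)\Rightarrow(\calQ,\nu,K,R)$ in distribution, with $(\calQ,\nu,K,R)$ the unique FME solution characterized by \eqref{1000}--\eqref{fme9}. There is no real obstacle in this argument; the only point requiring a moment of care is the tie-breaking convention in EDF, which the model specifies to be order of arrival, so FCFS and EDF admission coincide even in the (measure-zero but pathwise possible) event of simultaneous deadlines.
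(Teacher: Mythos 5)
Your proposal is correct and matches the paper's own reasoning, which is given informally in the paragraph preceding the corollary: deterministic patience means $\pi=\delta_\kappa$ satisfies Assumption~\ref{Assump4}, and EDF with a common constant patience time admits customers in exactly FCFS order (including the tie-breaking rule), so Theorems~\ref{Thm1} and~\ref{Thm3} apply directly. The only thing you add is a slightly more explicit explanation of why the admission and reneging sequences coincide under the two disciplines, which is a fine clarification but not a different argument.
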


\section{Uniqueness of solution to the FME}\label{sec-fme}
\beginsec

This section is devoted to the proof of uniqueness of solutions to the FME
\eqref{1000}--\eqref{fme9}, Theorem~\ref{Thm1}.
First we show the result under Assumption~\ref{Assump3} (Proposition~\ref{Thm4.4})
and then under Assumption~\ref{Assump4} (in Proposition~\ref{Thm4.5}).

\subsection{Uniqueness under Assumption~\ref{Assump3}}\label{uni-1}

Under Assumption~\ref{Assump3},
for any compact set $\bC\subset [0, H^s)\setminus\calZ$ we have
$\inf_{x\in\bC} h(x)>0$ where $h$ denotes the hazard rate function of $G$.
Now let us begin with the following lemma.

\begin{lemma}\label{lem1}
Let $(K, \nu)$ be continuous and satisfy \eqref{fme5}.
Furthermore, assume that the measure $\nu_0$ is atomless.
Then $\nu$ is atomless, and for any $\eps, T> 0$ there exists $\del>0$ such that
\begin{equation*}
\sup_{t\in[0, T]}\sup_{x\in\R_+}\nu_t[x-\del, x+\del]<\eps.
\end{equation*}
\end{lemma}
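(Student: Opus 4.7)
The plan is to derive both conclusions as immediate consequences of the explicit representation in Lemma~\ref{lem3.2}. First I would verify that the identity in Lemma~\ref{lem3.2}, stated for $f\in\C_c(\R_+)$, extends to bounded Borel $f$: since $\nu_0$ and $\nu_t$ are finite and the Lebesgue--Stieltjes measure $dK_s$ has bounded mass on $[0,t]$, approximating $\ind_{[a,b]}$ from above by a decreasing sequence of continuous functions and invoking dominated convergence on both sides gives, for every $0\le a\le b$ and $t\ge 0$,
\[
\nu_t[a,b]=\int_{[(a-t)^+,(b-t)^+]}\frac{1-G(x+t)}{1-G(x)}\,\nu_0(dx)
+\int_{[(t-b)^+,(t-a)^+]}(1-G(t-s))\,dK_s.
\]
Since $G$ is nondecreasing, both integrands lie in $[0,1]$, so this yields the clean pointwise bound
\[
\nu_t[a,b]\;\le\;\nu_0\bigl([(a-t)^+,(b-t)^+]\bigr)\;+\;\bigl(K_{(t-a)^+}-K_{(t-b)^+}\bigr),
\]
in which both summands are controlled by intervals of length at most $b-a$, uniformly in the location.

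Atomlessness of $\nu_t$ is then immediate: setting $a=b=x_0$ makes the first term $\nu_0\{(x_0-t)^+\}=0$ (since $\nu_0$ is atomless) and the second term vanishes as $(t-a)^+=(t-b)^+$; hence $\nu_t\{x_0\}=0$ for every $t\ge 0$ and $x_0\ge 0$.

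For the uniform modulus, apply the bound with $a=x-\del$ and $b=x+\del$. For the first summand: the distribution function $F(y):=\nu_0[0,y]$ is continuous (by atomlessness of $\nu_0$), nondecreasing, and has the finite limit $\nu_0([0,H^s))$ as $y\to\infty$, so $F$ extends continuously to the one-point compactification $[0,\infty]$ and is therefore uniformly continuous on $[0,\infty)$; hence for any $\eps>0$ one can pick $\del_1>0$ with $\sup_{y\ge 0}\nu_0[y,y+2\del_1]<\eps/2$. For the second summand: continuity of $K$ on the compact interval $[0,T+1]$ is uniform continuity, so one can pick $\del_2>0$ so that any $K$-increment over a subinterval of length at most $2\del_2$ is less than $\eps/2$. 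Taking $\del=\min(\del_1,\del_2,1)$ yields $\nu_t[x-\del,x+\del]<\eps$ for every $(t,x)\in[0,T]\times\R_+$. The only slightly non-mechanical step is the extension of Lemma~\ref{lem3.2} to indicator $f$, but this is a routine bounded-convergence argument; no genuine obstacle arises, and in particular Assumption~\ref{Assump3} is not needed for this lemma.
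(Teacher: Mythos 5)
Your proof is correct and follows essentially the same route as the paper: both apply Lemma~\ref{lem3.2}, bound the kernels $\frac{1-G(x+t)}{1-G(x)}$ and $1-G(t-s)$ by $1$, and conclude via uniform continuity of $y\mapsto\nu_0[0,y]$ and of $K$ on $[0,T]$. The only cosmetic difference is that you extend the identity to indicators by monotone approximation, whereas the paper simply evaluates it at a continuous bump function dominating the indicator; this changes nothing of substance.
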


\begin{proof}
Fix $s\in [0, T]$ and consider $x\in\R_+$.
Since $\nu_0\in\calM^0$, the map $x\mapsto \nu_0((-\infty, x])$ is non-decreasing, bounded and continuous on $\R$,
and therefore uniformly continuous on $\R$. Hence
there exists $\del_1>0$ such that
\begin{equation}\label{01}
\sup_{x\in\R_+}\nu_0[x-\del_1, x+\del_1] <\frac{\eps}{4}.
\end{equation}
Let $\del<\del_1$ and
$f_\del\in \C_{b, +}(\R_+)$ be such that $f_\del=1$ on $[x-\frac{\del}{2}, x+\frac{\del}{2}]$,
$f_\del$ vanishes outside $[x-\del, x+\del]$, and $0\leq f_\del\leq 1$.
By Lemma~\ref{lem3.2},
$$
\lan f_\delta, \nu_s\ran \leq \sup_{x\in\R_+}\nu_0[x-\del_1, x+\del_1] + \text{osc}_{2\del} (K, T).
$$
Therefore using the fact that $K$ is continuous and \eqref{01} we have the result.\qed
\end{proof}

Now we state a uniqueness result under Assumption~\ref{Assump3}.
\begin{proposition}\label{Thm4.4}
Let Assumption~\ref{Assump3} hold.
Let $(\calQ^i, \nu^i, K^i, R^i), i=1,2,$ be two of solutions of the FME \eqref{1000}--\eqref{fme9} on $[0, T]$.
Then they are equal on $[0, T]$.
\end{proposition}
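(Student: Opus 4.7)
My plan is to derive a Gronwall-type integral inequality for $\|K^1-K^2\|_t + \|R^1-R^2\|_t$ by combining three ingredients: Lemma~\ref{lem3.2}, which expresses $\nu^i$ as an explicit linear functional of $K^i$; the identity \eqref{theta-gamma} together with the standard $2$-Lipschitz continuity of the one-dimensional Skorohod map $\Gam_1$, which controls $\calQ^i$ in terms of $Y^i:=K^i+R^i$; and the EDF constraints \eqref{fme7}--\eqref{fme9}, combined with atomlessness of $\nu^i_t$ afforded by Assumption~\ref{Assump3} and Lemma~\ref{lem1}. Once such an inequality is in hand, $K^1\equiv K^2$ and $R^1\equiv R^2$ follow, and uniqueness of $\nu^i$ and $\calQ^i$ is a consequence of the uniqueness statements coming from Lemma~\ref{lem3.2} and Proposition~\ref{prop2.1}.

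Concretely, applying Lemma~\ref{lem3.2} with $f\equiv 1$ gives
\[
\lan 1,\nu^i_t\ran \;=\; A_0(t)+\int_0^t(1-G(t-s))\,dK^i_s,\qquad A_0(t):=\int\frac{1-G(x+t)}{1-G(x)}\,\nu_0(dx),
\]
so $|\lan 1,\nu^1_t\ran-\lan 1,\nu^2_t\ran|\le\|K^1-K^2\|_t$, and an analogous computation using Fubini yields $|D^1_t-D^2_t| = \bigl|\int_0^t G(t-s)\,d(K^1-K^2)_s\bigr| \le\|K^1-K^2\|_t$. From \eqref{fme6} and \eqref{theta-gamma} one has $\calQ^i_\cdot[0,x]=\Gam_1[\al_\cdot[0,x]-Y^i_\cdot]$ for every $x\ge 0$, giving $\sup_{s\le t}|\calQ^1_s[0,x]-\calQ^2_s[0,x]|\le 2\|Y^1-Y^2\|_t$ uniformly in $x$. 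Combining \eqref{fme2} and \eqref{fme3} yields the complementarity $Q^i_t(1-\lan 1,\nu^i_t\ran)=0$ with $\lan 1,\nu^i_t\ran\le 1$; on any subinterval where $Q^i_t>0$, $K^i$ is pinned down as the unique solution of the linear Volterra equation $K^i_t=1-A_0(t)+\int_0^t K^i_s\,g(t-s)\,ds$, while on $\{Q^i_t=0,\,\lan 1,\nu^i_t\ran<1\}$ reneging is ruled out by \eqref{fme8} and the admittance rate matches the inflow.

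The hard part will be matching the local dynamics across regime transitions -- between $Q>0$ and $Q=0$, and between $\sigma_t>t$ (no reneging) and $\sigma_t=t$ (reneging active) -- and in particular comparing the two reneging rates on the set $\{\sigma^i_t=t\}$. This is where Assumption~\ref{Assump3} enters decisively: by Lemma~\ref{lem1} the measures $\nu^i_t$ are uniformly atomless, and by the hypothesis that $g$ is bounded below on compact subsets disjoint from its finite zero set $\calZ$ one obtains quantitative control on the admittance and departure fluxes, and on $t\mapsto\sigma^i_t$ as a continuous function. Using the time-shift Lemma~\ref{time-shift}, I would restart the argument on maximal intervals of constant regime, derive local Lipschitz estimates there, and patch them together to obtain
\[
\|K^1-K^2\|_t+\|R^1-R^2\|_t \;\le\; C\int_0^t \bigl(\|K^1-K^2\|_s+\|R^1-R^2\|_s\bigr)\,ds,
\]
from which the conclusion follows by Gronwall's inequality.
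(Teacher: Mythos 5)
There is a genuine gap: the entire difficulty of this uniqueness proof is the control of $R^1-R^2$, and your proposal does not supply it. The estimates you actually derive are of Lipschitz type with constants at least $1$ ($|\Del D_t|\le\|\Del K\|_t$, $\sup_{s\le t}|\calQ^1_s[0,x]-\calQ^2_s[0,x]|\le 2\|Y^1-Y^2\|_t$, and $|\Del K|\le|\Del Q|+|\Del R|$ from \eqref{fme1}), and these cannot be assembled into the claimed Gronwall inequality: there is no integral (or small-constant) bound for $\Del R$ among your ingredients, and the kernel $g$ in $\Del D_t=\int_0^t g(t-s)\Del K_s\,ds$ need not be bounded under Assumption~\ref{Assump3}, so even the $K$-to-$D$ step does not produce a Gronwall-type term in general. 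The sentence ``the hard part will be matching the local dynamics across regime transitions \dots comparing the two reneging rates on $\{\sigma^i_t=t\}$'' is precisely the missing proof: on that set $R$ is not determined by any local rate equation (it is pinned down only through the Skorohod-type complementarity conditions \eqref{fme6}--\eqref{fme9}), so no ``local Lipschitz estimate'' is available there. Moreover, the proposed decomposition into maximal intervals of constant regime is solution-dependent: the sets $\{Q^1>0\}$ and $\{Q^2>0\}$ (and the sets where $\sigma^i_t=t$) need not coincide for the two solutions, the transition times need not match or be finitely many, and the Volterra/renewal characterization of $K^i$ on a busy interval not starting at $0$ involves $\nu^i$ at the (unknown, possibly different) entry time, so it does not ``pin down'' $K^i$ without further argument.

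For comparison, the paper's proof fills exactly this hole by a different mechanism: it fixes a compact set $\bC\subset[0,H^s)\setminus\calZ$ carrying mass at least $1-\eps$ of both $\nu^i_s$ uniformly on $[0,T]$ (this is where Assumption~\ref{Assump3} and Lemma~\ref{lem1} enter), sets $h_0=\inf_{\bC}h>0$, and chooses a single $\del$ with $\sup_{[0,T]}\la_s\,\pi[0,\del]<(1-\eps)h_0$ and $G(\del)<1/10$. Lemma~\ref{lem5} then shows that once $\calQ^1_t[0,t+z_1]=0$ the queue stays empty below that level (the transfer-into-service rate $\lan h,\nu^1\ran\ge(1-\eps)h_0$ dominates the inflow of soon-to-expire deadlines), and Lemma~\ref{lem4}, via the auxiliary reflector $\tilde R$ of Lemma~\ref{lem3} and the complementarity \eqref{fme9}, yields the key bound $|R^1_s-R^2_s|\le\|D^1-D^2\|_s$ on $[0,\del]$. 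Combining this with $\|\Del D\|_\del\le G(\del)\|\Del K\|_\del\le\tfrac1{10}\|\Del K\|_\del$ gives the short-interval contraction $\|\Del K\|_\del\le\tfrac3{10}\|\Del K\|_\del$, hence equality on $[0,\del]$, and the time-shift Lemma~\ref{time-shift} with the uniform choice of $\del$ propagates equality over $[0,T]$. Without an argument of this kind (or some substitute bound on $\Del R$), your scheme does not close.
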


Let $\eps\in(0, 1/2)$. Then by tightness there exists a sequence of compact sets $\bC_n$ such that
$\max_{i=1,2}\sup_{s\in[0, T]}\nu_s^i(\bC^c_n)\to 0$
as $n\to\infty$.
Combining with Lemma~\ref{lem1} we find a compact set $\bC\subset [0, H^s)\setminus\calZ$ such that
\begin{align}\label{02}
\max_{i=1,2}\sup_{s\in[0, T]} \nu_s^i(\bC^c)<\eps.
\end{align}
Here we have used the fact that $\nu^i([0, H^s)^c)=0$ which is evident from Lemma~\ref{lem3.2}. Define $h_0=\inf_{x\in \bC}h(x)$. From
Assumption~\ref{Assump3}(1) we note that
$h_0$ is positive. Let $m=(1-\eps) h_0$ and $\del$
be such that $\sup_{s\in[0, T]} \la_s\pi[0, \del]< m$
and $G(\del)<\frac{1}{10}$.
We shall show that if the two solutions agree at $t<T$ then
they agree on the interval $[t, t+\del]$. This will prove the result.
A key observation is that when the solutions are translated using
Lemma \ref{time-shift}, the same compact set $\bC$ chosen above
works for a fixed $\eps$ and thus the choice of $\del$ remains the same on $[0, T]$.

\begin{lemma}\label{lem2}
$(\calQ^1, \nu^1, K^1, R^1)=(\calQ^2, \nu^2, K^2, R^2)$ on $[0, \del]$.
\end{lemma}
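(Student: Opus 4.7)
I aim to show $\Del K := K^1-K^2\equiv 0$ and $\Del R := R^1-R^2\equiv 0$ on $[0,\del]$. Once this is done, Lemma~\ref{lem3.2} applied to the common initial datum $\nu_0$ gives $\nu^1=\nu^2$, and the MVSM identity \eqref{fme6} with common arrival input $\al$ and equal $K^i+R^i$ forces $\calQ^1=\calQ^2$. Lemma~\ref{lem3.2} with $f\equiv 1$ yields
\[
\Del\lan 1,\nu_t\ran=\int_0^t(1-G(t-u))\,d\Del K_u,
\]
and approximating $h$ by nonnegative $\C_c$ functions -- legitimate by Assumption~\ref{Assump2} and the FME requirement $\int_0^T\lan h,\nu_s\ran\,ds<\iy$ -- gives $\Del\lan h,\nu_s\ran=\int_0^s g(s-u)\,d\Del K_u$ via $h(v)(1-G(v))=g(v)$, whence by Fubini $\Del D_t=\int_0^t G(t-u)\,d\Del K_u$.

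\textbf{Uniqueness of $K$ by a Volterra contraction.} The nonidling identity $\lan 1,\nu^i_t\ran=X^i_t\wedge 1$ from \eqref{fme3} motivates a case split on $[0,\del]$. If $X_0<1$, then after possibly shrinking $\del$, continuity of $X^i$ gives $X^i_t<1$ on $[0,\del]$ for both $i$, hence $R^i\equiv 0$ by \eqref{fme8}, and combining \eqref{fme4} and \eqref{101} forces $K^i_t=X_0-\lan 1,\nu_0\ran+E_t$, identical in $i$. If $X_0\ge 1$, both solutions remain overloaded on $[0,\del]$ (by continuity of $X^i$ and smallness of $\del$), so $\lan 1,\nu^i_t\ran=1$ and $\Del\lan 1,\nu_t\ran\equiv 0$; \eqref{fme4} then gives $\Del K_t=\Del D_t=\int_0^t G(t-u)\,d\Del K_u$. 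Integrating by parts using $G(0)=0$,
\[
\Del K_t=\int_0^t g(t-u)\,\Del K_u\,du,\qquad t\in[0,\del],
\]
a Volterra equation whose kernel has $L^1$-mass $G(\del)<1/10$, and which is therefore a strict contraction in $L^\infty([0,\del])$, yielding $\Del K\equiv 0$.

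\textbf{Uniqueness of $R$, and the main obstacle.} With $\Del K=0$, and hence $\Del D=0$, $\Del\lan 1,\nu\ran=0$, $\Del X=-\Del R$, only the reneging can differ between the two solutions. From \eqref{theta-gamma} applied to each $i$,
\[
\calQ^i_t[0,x]=\Gam_1[\al_\cdot[0,x]-K-R^i](t),
\]
and the reneging constraints \eqref{fme7}--\eqref{fme9} imply that mass contributed to $R^i$ at time $t\in[0,\del]$ can come only from initial customers lying in $\calQ_0[0,\del]$ (whose contribution is controlled by the atomlessness of $\calQ_0$ and the smallness of $\del$), or from arrivals at $s\in[0,t]$ with patience at most $\del$; the total inflow rate of the latter is bounded by $\la_s\pi[0,\del]<m$. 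Using the explicit $1$-Lipschitz dependence of $\Gam_1$ on its driver from Remark~\ref{R2.1} and propagating this bound through the coupling $R^i\leftrightarrow\calQ^i$, I would derive a Volterra-type inequality of the form $\|\Del R\|_t\le C\,m\int_0^t\|\Del R\|_s\,ds$ on $[0,\del]$, and Gronwall forces $\Del R\equiv 0$. This last step is the crux of the proof: unlike $K$, which decouples cleanly in the overloaded regime, $R$ couples nonlocally through the MVSM to the entire past of $\calQ$, and the global minimality argument of \cite{atar-bis-kaspi-ram} is unavailable in the many-server setting. The twin smallness conditions $G(\del)<1/10$ and $\la\pi[0,\del]<m$ are tailored so that the $K$- and $R$-side Volterra operators are each strict contractions on $[0,\del]$; a secondary subtlety, the boundary case $X_0=1$, is handled by combining the continuity of $X^i$ with the structural bound $\lan 1,\nu^i\ran\le 1$ to show that one regime persists throughout $[0,\del]$.
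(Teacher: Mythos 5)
There is a genuine gap, in fact two. First, your case split is not valid: you assume that on $[0,\del]$ (possibly shrunk) one regime persists, either $X^i_t<1$ throughout or $\lan 1,\nu^i_t\ran\equiv 1$ throughout. Continuity gives this only when $X_0$ is strictly away from $1$, and then only for a $\del$ depending on the solution paths; in the boundary case $X_0=1$ (which cannot be avoided, since after the time shift of Lemma~\ref{time-shift} one must handle arbitrary states $X_{t_0}$, and the paper stresses that $\del$ must be chosen uniformly on $[0,T]$ for the bootstrap in Proposition~\ref{Thm4.4} to work) the process $X$ may cross level $1$ repeatedly on any interval, and your closing remark that ``one regime persists'' is an assertion, not an argument. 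The paper avoids any such dichotomy: it proves $|R^1_s-R^2_s|\le\|D^1-D^2\|_s$ on $[0,\del]$ (Lemma~\ref{lem4}) and then closes the loop via $\|\Del K\|_\del\le 3\|\Del D\|_\del$ from the balance equations together with $\|\Del D\|_\del\le G(\del)\|\Del K\|_\del$, with no regime assumption.

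Second, and more seriously, the part you yourself call the crux --- controlling $\Del R$ --- is not proved. You propose a Gronwall inequality $\|\Del R\|_t\le Cm\int_0^t\|\Del R\|_s\,ds$ ``to be derived'' from $1$-Lipschitz continuity of $\Gam_1$, but the EDF reneging mechanism is not Lipschitz in the queue state: reneging occurs exactly when the left edge of the support of $\calQ_t$ meets $t$ (equation \eqref{fme9}), a threshold phenomenon, and no bound of $\Del R$ by an integral of itself is available this way. The paper's actual argument (Lemmas~\ref{lem3}--\ref{lem4}) is a first-time/contradiction argument: it introduces the auxiliary process $\tilde R$ so that $\Th_1(\al,K^1+\tilde R)=\calQ^2$, compares the one-dimensional Skorohod maps at nearby levels $y<t_1<z$ using atomlessness of $\calQ_0$ and $\calE$, and invokes the drainage Lemma~\ref{lem5}, whose proof is where the constant $m=(1-\eps)h_0$ and the condition $\sup_s\la_s\pi[0,\del]<m$ are actually used (the departure rate $\lan h,\nu^1\ran>m$ during busy periods, guaranteed by Assumption~\ref{Assump3} via the compact set $\bC$, dominates the inflow below a fixed deadline level, so a vacated region $[0,t+z_1]$ of the queue stays vacated, contradicting \eqref{fme9} if $R^1$ charged that time interval). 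Your sketch names the constants $m$ and $G(\del)<1/10$ but never uses $m$ in this load-bearing way, so the essential content of the lemma is missing.
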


The proof of Lemma~\ref{lem2} is based on Lemmas \ref{lem3}--\ref{lem4} below.
Recall $D^i$ from \eqref{fme4}. Denote $\Del D= D^1-D^2$.

\begin{lemma}\label{lem3}
Define $\tilde{R}_s= R^2_s+D^2_s-D^1_s +\lan 1, \nu^2_s\ran -\lan 1, \nu^1_s\ran$.
Then the image $\tilde\calQ$ of the data $(\al, K^1+\tilde{R})$ under $\Th_1$
satisfies $\tilde\calQ=\calQ^2$.
\end{lemma}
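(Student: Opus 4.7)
The plan is to recognize this as a bookkeeping identity that follows directly from the balance equation \eqref{fme4} together with the FME identification \eqref{fme6} applied to the second solution.

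First I would use \eqref{fme4} for each of the two solutions. Since both share the same initial data, we have $\nu^1_0=\nu^2_0=\nu_0$, and so
\[
K^i_t=\lan 1,\nu^i_t\ran-\lan 1,\nu_0\ran+D^i_t,\qquad i=1,2.
\]
Subtracting these two identities yields
\[
K^2_t-K^1_t=\lan 1,\nu^2_t\ran-\lan 1,\nu^1_t\ran+D^2_t-D^1_t.
\]
Adding $R^2_t$ to both sides and rearranging,
\[
K^2_t+R^2_t=K^1_t+\bigl(R^2_t+D^2_t-D^1_t+\lan 1,\nu^2_t\ran-\lan 1,\nu^1_t\ran\bigr)=K^1_t+\tilde{R}_t,
\]
which is precisely the definition of $\tilde R$.

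Second, I would invoke \eqref{fme6} applied to the second solution, which gives $\calQ^2=\Th_1(\al,K^2+R^2)$. Combining with the identity $K^2+R^2=K^1+\tilde R$ just derived, we conclude
\[
\tilde\calQ=\Th_1(\al,K^1+\tilde R)=\Th_1(\al,K^2+R^2)=\calQ^2,
\]
as desired.

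There is no real obstacle here: the lemma is essentially a rewriting that absorbs the difference in the departure terms and pool occupancies of the two candidate solutions into an auxiliary ``reneging'' process $\tilde R$, so that the first-solution admittance process $K^1$ plus $\tilde R$ replaces the second-solution drive $K^2+R^2$ inside the Skorohod map. The work of comparing $\tilde R$ with $R^1$ and exploiting the MVSP to conclude that the two solutions must coincide will be carried out in the subsequent lemmas; this lemma only provides the algebraic substitution that makes such a comparison possible.
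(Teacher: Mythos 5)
Your proof is correct and follows exactly the paper's argument: use \eqref{fme4} for both solutions to get $K^1+\tilde R=K^2+R^2$, then note that $\tilde\calQ$ and $\calQ^2$ are images of the same data $(\al,K^2+R^2)$ under $\Th_1$ via \eqref{fme6}. You have merely written out the algebraic cancellation that the paper leaves implicit.
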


\begin{proof}
By \eqref{fme4}, $K^1+\tilde{R}=K^2+R^2$. As a result,
$\tilde\calQ$ and $\calQ^2$ are images of the same data under the map $\Th_1$,
and so they are equal.
\qed
\end{proof}

\begin{lemma}\label{lem5}
If for some $t\in [0, \del)$ and some $z_1\in(0,\del-t]$ we have $\calQ^1_t[0, t+z_1]=0$
then $\calQ^1_s[0, t+z_1]=0$ for all $s\in [t, t+z_1)$.
\end{lemma}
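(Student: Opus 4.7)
I would argue by contradiction, exploiting the MVSP representation for $\calQ^1$, the EDF complementarity encoded in it, and the three inequalities built into the choices of $\bC$ and $\del$: $h\ge h_0$ on $\bC$, $\nu^1_\cdot(\bC^c)<\eps$ on $[0,T]$, and $\sup_{s\in[0,T]}\la_s\pi[0,\del]<m=(1-\eps)h_0$. Write $\phi(s)=\calQ^1_s[0,t+z_1]$ and let $\beta^1=\Th_2(\al,K^1+R^1)$. Theorem~\ref{Thm2} and \eqref{13} tell us that $\Th_3(\al,K^1+R^1)=0$, so items~1 and~4 of the MVSP give the clean identity
\[
\calQ^1_s[0,x]=\al_s[0,x]-\beta^1_s[0,x],\qquad x\ge 0.
\]
Setting $u(s)=\al_s[0,t+z_1]-K^1_s-R^1_s$, which is continuous in $s$ because $\al\in\C^{\up,0}_\calM$ and $K^1,R^1\in\C_{\R_+}$, the relation \eqref{theta-gamma} at $x=t+z_1$ yields $\phi=\Gam_1[u]$ and $\beta^1_\cdot(t+z_1,\iy)=\Gam_2[u]$, both continuous in $s$.

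Now assume toward contradiction that $\phi(s^*)>0$ for some $s^*\in(t,t+z_1)$ and set $s_0=\sup\{s\in[t,s^*]:\phi(s)=0\}$. By continuity and $\phi(t)=0$ we have $s_0<s^*$, $\phi(s_0)=0$, and $\phi>0$ on $(s_0,s^*]$. On this interval $\phi>0$ forces $Q>0$ and hence, by \eqref{fme2}--\eqref{fme3}, $\lan 1,\nu^1\ran=1$; continuity of $X$ extends this to $[s_0,s^*]$. Combined with $\nu^1_r(\bC)\ge 1-\eps$ this yields $\lan h,\nu^1_r\ran\ge h_0(1-\eps)=m$ for $r\in[s_0,s^*]$, and \eqref{fme4} then gives
\[
K^1_{s^*}-K^1_{s_0}=\int_{s_0}^{s^*}\lan h,\nu^1_r\ran\,dr\ge m(s^*-s_0).
\]
The MVSP complementarity (item~2) reads $\phi=0$ $d\beta^1_\cdot(t+z_1,\iy)$-a.e.; since $\phi>0$ throughout the positive-length interval $(s_0,s^*]$ and $\beta^1_\cdot(t+z_1,\iy)$ is continuous, it follows that $\beta^1_{s^*}(t+z_1,\iy)=\beta^1_{s_0}(t+z_1,\iy)$, so that (using item~4 with $\iota=0$, which gives $\beta^1_s[0,\iy)=K^1_s+R^1_s$)
\[
(\beta^1_{s^*}-\beta^1_{s_0})[0,t+z_1]=(K^1_{s^*}-K^1_{s_0})+(R^1_{s^*}-R^1_{s_0})\ge m(s^*-s_0).
\]
On the arrival side, $r\in[s_0,s^*]\subset[t,t+z_1]$ forces $t+z_1-r\le z_1\le\del$, so
\[
\al_{s^*}[0,t+z_1]-\al_{s_0}[0,t+z_1]=\int_{s_0}^{s^*}\la_r\pi[0,t+z_1-r]\,dr<m(s^*-s_0).
\]
Subtracting these two displays and using the identity for $\phi$,
\[
\phi(s^*)-\phi(s_0)=(\al_{s^*}-\al_{s_0})[0,t+z_1]-(\beta^1_{s^*}-\beta^1_{s_0})[0,t+z_1]<0,
\]
which contradicts $\phi(s_0)=0$ and $\phi(s^*)>0$.

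The main subtlety is handling the continuity of $\phi$ and of $\beta^1_\cdot(t+z_1,\iy)$: a priori $\calQ^1_s$ and $\beta^1_s$ could carry atoms at $t+z_1$, which would spoil the transition from the ``$d\beta^1$-a.e.'' complementarity to the pointwise equality $\beta^1_{s^*}(t+z_1,\iy)=\beta^1_{s_0}(t+z_1,\iy)$. Passing through the one-dimensional Skorohod map via \eqref{theta-gamma} with the continuous input $u$ sidesteps this, and is the step that lets the ``soft'' EDF constraint and the strict arrival-rate inequality combine into a hard contradiction at the critical times $s_0$ and $s^*$.
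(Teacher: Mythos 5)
Your proof is correct and follows essentially the same route as the paper's: locate the last time $s_0$ at which $\calQ^1_\cdot[0,t+z_1]$ vanishes, use $Q^1>0$ on $(s_0,s^*]$ together with \eqref{fme2}--\eqref{fme4} and the choice of $\bC,\del$ to get $K^1$ increasing at rate at least $m$ while arrivals into $[0,t+z_1]$ accrue at rate strictly below $m$, and invoke the Skorohod/MVSP complementarity to conclude that the mass below level $t+z_1$ cannot become positive (the paper packages this by restarting $\Gam_1$ at $s_0$, you by tracking $\beta^1=\Th_2(\al,K^1+R^1)$). One cosmetic point: the claim $\Th_3(\al,K^1+R^1)=0$ cannot be cited from Theorem~\ref{Thm2} and \eqref{13}, which concern the prelimit processes rather than an FME solution; but this is immaterial, since your identity $\calQ^1_s[0,x]=\al_s[0,x]-\beta^1_s[0,x]$ already follows from items~1 and~4 of the MVSP alone, and in the complementarity step one may instead note that $\beta^1_\cdot(t+z_1,\iy)+\iota_\cdot=\Gam_2[u]$ is constant on $[s_0,s^*]$, hence so is each of its nondecreasing summands.
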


\begin{proof}
Assume the contrary. Then there exist $t$ and $z_1$ as in the statement of the lemma,
and there exists $s\in [t, t+z_1)$ such that $\calQ^1_s[0, t+z_1]>0$.
Define $s_0=\sup\{s'\leq s: \calQ^1_{s'}[0, t+z_1]=0\}$.
From \eqref{fme6} and \cite[Proposition~2.8]{atar-bis-kaspi-ram} we have $\calQ\in\C_{\calM^0}$.
Since $\calQ$ is atomless
and continuous we have $s_0\in [t, s)$. Appealing to the 1-dimensional Skorohod map,
we have
$$
\calQ^1_{s'}[0, t+z_1]=\Gam_1(\calQ^1_{s_0}[0, t+z_1] +
\bar\calE[0, t+z_1]-\bar{K}^1-\bar{R}^1)(s'),
$$
where $\bar\calE_{s'}=\calE_{s'}-\calE_{s_0},\, \bar{K}^1_{s'}=K^1_{s'}-K^1_{s_0},\, \bar{R}^1_{s'}=R^1_{s'}-R^1_{s_0}$. Now by definition we have
$\calQ_{s'}>0$ on $(s_0, s]$. Therefore $\lan 1, \nu^1_{s'}\ran =1$ on $[s_0, s]$. Hence by \eqref{fme4} we have
\begin{equation}\label{07}
\bar{K}^1_{s'}=\int_{s_0}^{s'}\lan h, \nu^1_p\ran dp.
\end{equation}
We note that when $\lan 1, \nu^1_{p}\ran =1$,
we have $\lan h, \nu^1_p\ran \geq h_0 \nu^1_p(\bC)>h_0(1-\eps)=m$.
Again
$$
\bar\calE_{s'}[0, t+z_1]=\int_{s_0}^{s'}\ind_{\{t+z_1\geq p\}} \la_p \pi[0, t+z_1-p]dp\leq \int_{s_0}^{s'} \sup_{[0, T]}\la_s \pi[0, \del]dp.
$$
Therefore $\bar\calE[0,t+z_1]-\bar{K}^1$ is non-increasing on $[s_0, s]$. Since $\calQ^1_{s_0}[0, t+z_1]=0$ we have $\calQ^1_{s}[0, t+z_1]=0$ which
is a contradiction. Hence follows the lemma.
\qed
\end{proof}

\begin{lemma}\label{lem4}
For all $s\in[0, \del]$ we have $|R^1_s-R^2_s|\leq \|D^1-D^2\|_s$.
\end{lemma}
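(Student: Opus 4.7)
The plan is to prove the one-sided inequality $R^1_s - R^2_s \le \|\Delta D\|_s$, where $\Delta D := D^1 - D^2$; the reverse bound then follows by interchanging the roles of the two solutions. The starting point is the identity
\[
R^i_t = \al_t[0,t] - \calK^i_t[0,t], \qquad i=1,2,
\]
obtained by setting $x=t$ in \eqref{14}, invoking \eqref{fme7}, and observing that $\calR^i_t(t,\iy)=0$ (reneging occurs exactly at the customer's absolute deadline). Subtracting the two identities reduces the problem to controlling the difference of cumulative ``urgent'' admissions $\calK^i_t[0,t]$.

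Next, I would apply the 1-D Skorohod representation of Remark~\ref{R2.1} to the MVSP relation \eqref{fme6} at the level $x=t$, obtaining
\[
\calK^i_t(t,\iy) = \Gam_2\bigl[\al_\cdot[0,t] - K^i - R^i\bigr](t),
\]
and, via Lemma~\ref{lem3}, rewrite $\calK^2_t(t,\iy)$ as $\Gam_2[\al_\cdot[0,t] - K^1 - \tilde R](t)$, with $\tilde R$ as defined there. The $1$-Lipschitz property of $\Gam_2$ then yields
\[
\bigl|\calK^1_t(t,\iy) - \calK^2_t(t,\iy)\bigr| \le \|R^1 - \tilde R\|_t = \|Q^2 - Q^1\|_t,
\]
using the algebraic identity $R^1 - \tilde R = (K^1+R^1) - (K^2+R^2) = Q^2 - Q^1$ (the latter equality a consequence of $K^i + R^i = Q_0 + E - Q^i$, which follows from \eqref{fme1}).

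The main obstacle is to close the resulting estimate with the sharp Lipschitz constant~$1$ on $\|\Delta D\|$. Combining the above with $R^1_t - R^2_t = (K^2_t - K^1_t) + (\calK^1_t(t,\iy) - \calK^2_t(t,\iy))$, and bounding $|K^1-K^2|$ and $\|Q^1-Q^2\|$ via $\lan 1,\nu^i\ran = X^i\wedge 1$, $X^i = X_0 + E - D^i - R^i$, together with the $1$-Lipschitz continuity of $[\cdot-1]^+$ and $\cdot\wedge 1$, produces only a self-referential inequality of the form $|R^1-R^2|_t \le C\|\Delta D\|_t + \|R^1-R^2\|_t$, which does not close. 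To obtain the sharp constant I would proceed by contradiction: set $\tau := \inf\{s\in[0,\del]: R^1_s - R^2_s > \|\Delta D\|_s\}$; by continuity and $R^i_0=0$, one has $R^1_\tau - R^2_\tau = \|\Delta D\|_\tau$ and $R^1$ must grow strictly faster than $R^2 + \|\Delta D\|$ on an immediate right-neighbourhood of $\tau$. In particular $R^1$ is strictly increasing there, so by \eqref{fme9}, $\sigma^1_s = s$ on a $dR^1$-positive-measure subset of that neighbourhood. Exploiting the EDF prioritization \eqref{15}, the support-monotonicity Lemma~\ref{lem5}, and the quantitative choice of $\del$---under which the arrival rate $\sup_s\la_s\pi[0,\del]$ of new urgent deadlines is strictly less than the lower bound $m=(1-\eps)h_0$ on the departure rate in the saturated regime---one concludes that each unit of extra reneging in system~1 is matched one-for-one by extra urgent admissions (and hence extra departures) in system~2. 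This yields $R^1 - R^2 \le \|\Delta D\|$ on a neighbourhood of $\tau$, contradicting the definition of $\tau$. This final matching, which converts the EDF-induced prioritization into a Lipschitz bound with the sharp constant $1$, is the crux of the argument.
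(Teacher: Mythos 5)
Your opening reduction is already on shaky ground: an FME solution consists only of $(\calQ,\nu,K,R)$, so the measure-valued processes $\calK^i$, $\calR^i$ and the relations \eqref{14}, \eqref{15} you invoke are pre-limit objects that are not defined (let alone established) at the fluid level; the map $\Th_2(\al,K+R)$ only gives you the combined admission-plus-reneging measure, not a separate $\calK^i$. You then abandon that route anyway, conceding that the Lipschitz manipulation does not close, so the weight of the proof rests entirely on the final contradiction argument. There, however, the decisive step is only asserted: ``each unit of extra reneging in system~1 is matched one-for-one by extra urgent admissions (and hence extra departures) in system~2'' is precisely the statement that needs proof, you give no mechanism for it, and in fact it is not how the sharp constant is obtained in the paper. (Also, the inf defining $\tau$ only gives you times $t_n\downarrow\tau$ where the inequality is violated, not that $R^1$ grows strictly faster throughout a right-neighbourhood; the paper instead extracts a single point $t_1$ such that $dR^1$ charges every neighbourhood of $t_1$ and the inequality is violated at $t_1$.)

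The missing idea is the following comparison, which is the heart of the paper's proof. At such a point $t_1$, non-idling \eqref{fme8} forces $\lan 1,\nu^1_{t_1}\ran=1$, so the quantity $\Del_0=R^1_{t_1}-R^2_{t_1}-\|D^1-D^2\|_{t_1}-\lan 1,\nu^2_{t_1}\ran+\lan 1,\nu^1_{t_1}\ran$ is strictly positive; since $\calQ_0$ and $\calE$ are atomless one can pick $y<t_1<z<\del$ with $\calQ_0(y,z]+\calE_{t_1}(y,z]<\Del_0/2$. One then compares, via the one-dimensional Skorohod map and \eqref{fme6}, the input $\psi^1(z)=\calQ_0[0,z]+\calE[0,z]-K^1-R^1$ for system~1 at level $z$ with the input $\tilde\psi(y)=\calQ_0[0,y]+\calE[0,y]-K^1-\tilde R$ coming from Lemma~\ref{lem3} (whose role is exactly to rewrite $\calQ^2$ with the \emph{same} $K^1$ but the modified reflection term $\tilde R$). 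Because $\calQ^2_{t_1}[0,y]=0$ by \eqref{fme7}, one gets $\psi^1_{t_1}(z)\le\tilde\psi_{t_1}(y)-\Del_0/2<0$, hence $\calQ^1_{t_0}[0,z]=0$ at some $t_0<t_1$; Lemma~\ref{lem5} then propagates this to $\sigma^1_t>t$ on $[t_0,t_1]$, and since $dR^1$ charges that interval this contradicts \eqref{fme9}. Your sketch cites Lemma~\ref{lem5}, \eqref{fme9} and the choice of $\del$, but without the $\Del_0$ comparison (and in particular without using $\lan1,\nu^1_{t_1}\ran=1$ from \eqref{fme8} and the role of $\tilde R$ from Lemma~\ref{lem3}) there is no route from ``$R^1$ exceeds $R^2+\|\Del D\|$'' to ``$\sigma^1>s$ where $dR^1$ charges,'' so the proof as proposed has a genuine gap at its self-declared crux.
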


\begin{proof}
Define $\tau =\inf\{t :\ R^1_t> R^2_t +\|D^1-D^2\|_t\}$.
If $\tau\geq \del$ then there is nothing to prove. Arguing
by contradiction, assume that $\tau<\del$. We claim that there exists $t_1<\del$ such that
\begin{align*}
R^1_{t_1}  > R^2_{t_1} + \|D^1-D^2\|_{t_1},\quad
\text{and, for any neighbourhood $\boldsymbol{O}$ of $t_1$ we have} \int_{\boldsymbol{O}}dR^1>0.
\end{align*}
To prove this claim we chose $t_2\in (\tau, \del)$ such that
$R^1_{t_2}  > R^2_{t_2} + \|D^1-D^2\|_{t_2}$. Define $t_1=\inf\{ t\leq t_2, R^1_t=R^1_{t_2}\}$.
By \eqref{1000}, $E_0=0$, and thus by
\eqref{fme1}, $R^1_0=0$.
Hence we get $t_1>0$ as $R^1_{t_1}=R^1_{t_2}>0$. Again
\begin{equation}\label{100}
R^1_{t_1}= R^1_{t_2} > R^2_{t_2} +\|D^1-D^2\|_{t_2}\geq
R^2_{t_1} +\|D^1-D^2\|_{t_1}.
\end{equation}
Also by definition of $t_1$, we have for any $t<t_1$ that $R^1_{t_1}-R^1_t>0$. This establishes the claim we made above. Therefore by \eqref{fme8} we have a sequence
$\{s_n\}, s_n\to t_1$, such that $X^1_{s_n}\geq 1$. Therefore $\lan 1, \nu^1_{s_n}\ran =1$ for all $n$, implying by continuity $\lan 1, \nu^1_{t_1}\ran =1$. Define
$$
\Del_0=R^1_{t_1}-R^2_{t_1}-\|D^1-D^2\|_{t_1}-\lan 1, \nu^2_{t_1}\ran + \lan 1, \nu^1_{t_1}\ran.
$$
Then by \eqref{100}, $\Del_0>-\lan 1, \nu^2_{t_1}\ran + \lan 1, \nu^1_{t_1}\ran$.
Since $\lan 1, \nu^2_{t_1}\ran \leq 1$ we have $\Del_0 >0$. Since the measures $\calQ_0, \calE$ are atomless we can find $y=t_1-\eps_1, \, z=t_1+\eps_1$
such that $t_1+\eps_1<\del$ and
$$ \calQ_0(y, z] +\calE_{t_1}(y, z] <\frac{\Del_0}{2}.$$
Define
\begin{align*}
\psi^1_s(z) &= \calQ_0[0, z] + \calE_s[0, z] - K^1_s- R^1_s,
\\
\tilde{\psi}_s(y) & = \calQ_0[0, y] + \calE_s[0, y] - K^1_s-\tilde{R}_s,
\end{align*}
where $\tilde R$ is  same as in Lemma \ref{lem3}.
From \eqref{fme6} we have $\calQ^1_{t}[0, z]=\Gam_1[\psi^1(z)](t)$ and by \eqref{fme7} and Lemma~\ref{lem3}  we have $0=\calQ^2_{t_1}[0, y]=\Gam_1[\tilde\psi(y)](t_1)$.
Now
\begin{align*}
\psi^1_{t_1}(z)&=\tilde\psi_{t_1}(y) + \calQ_0(y, z] + \calE_{t_1}(y, z] -(R^1_{t_1}-\tilde{R}_{t_1})\\
&\le\tilde\psi_{t_1}(y)+\frac{\Del_0}{2}-\Big(R^1_{t_1}-R^2_{t_1}-(D^2_{t_1}-D^1_{t_1})
-\lan 1,\nu^2_{t_1}\ran+\lan1,\nu^1_{t_1}\ran\Big).
\end{align*}
Noting that $D^2_{t_1}-D^1_{t_1}\le\|D^2-D^1\|_{t_1}$, it follows from the definition
of $\Del_0$ that
\[
\psi^1_{t_1}(z)\leq \tilde\psi_{t_1}(y) + \frac{\Del_0}{2}-\Del_0
= \tilde\psi_{t_1}(y) - \frac{\Del_0}{2}.
\]
Since $\Gam_1[\tilde\psi(y)](t_1)=0$ we have $\tilde\psi_{t_1}(y)\leq 0$.
Let $t_0=\inf\{ t\geq 0 : \psi^1_t(z)\leq 0\}$.
By the above observation, using the continuity of $t\mapsto\psi^1_t(z)$,
we have $t_0<t_1$.
Thus by the definition of $t_0$,
$\psi^1_{t_0}(z)=\inf_{[0, t_0]}(\psi^1_s(z)\wedge 0)$ implying
$\calQ^1_{t_0}[0, z]=\Gam_1(\psi^1(z))(t_0)=0$ and $t_0<t_1<z$.
Now by Lemma~\ref{lem5} we have $\{\sigma^1_t > t\}$ on $[t_0, t_1]$.
But $dR^1$ charges the interval $[t_0, t_1]$.
This is a contradiction to \eqref{fme9} and thus $\tau\geq \del$.
\qed
\end{proof}

\vspace{.2in}

\noi{\bf Proof of Lemma~\ref{lem2}:} Denote $\Del K = K^1-K^2$.
Analogously, define other quantities as $\Del R, \Del Q, \Del X$. Then from
\eqref{fme1} and \eqref{fme3} we have on $[0, \del]$,
\begin{align*}
|\Del K|\leq |\Del Q| + |\Del R| &\leq |\Del X| + |\Del R|
\\
&\leq 2 |\Del R| + |\Del D|
\\
&\leq 3 \|D^1-D^2\|_\del,
\end{align*}
where in the third inequality we used \eqref{101},
and in the last one we used Lemma~\ref{lem4}. Thus
\begin{equation}\label{08}
\|\Del K\|_\del\leq 3 \|D^1-D^2\|_\del.
\end{equation}
By \cite[Corollary 4.4]{kaspi-ramanan}, we have
$$ D^i_t = \int_{[0, H^s)} \frac{G(x+t)-G(x)}{1-G(x)}\nu_0(dx) + \int_0^t g(t-s) K^i_s ds , \quad i=1,2.$$
Therefore, for $t\le\del$,
$$\Del D_t = \int_0^t g(t-s) \Del K_s ds,$$
and
\begin{align*}
|\Del D_t| &\le  \int_0^t g(t-s) |\Del K_s| ds
\\
&\le \|\Del K\|_tG(t)\\
&\le \|\Del K\|_\del G(\del).
\end{align*}
Hence $\|\Del D\|_\del\le \|\Del K\|_\del G(\del)\le\frac{1}{10}\|\Del K\|_\del$.
Therefore by \eqref{08}, $\|\Del K\|_\del\le \frac{3}{10}\|\Del K\|_\del$.
This shows $\Del K=0$ on $[0,\del]$. Hence $\Del D=0$ on $[0,\del]$.
Using Lemma \ref{lem4}, also $\Del R=0$ on $[0,\del]$.
The equality of $\nu^1=\nu^2$ on $[0, \del]$ now follows from Lemma~\ref{lem3.2}.
Finally, $\calQ^1=\calQ^2$ follows from the uniqueness of solutions to the MVSP.
\qed

\skp

\noindent{\bf Proof of Proposition~\ref{Thm4.4}.}
To prove uniqueness on $[0, T]$ we argue that if the solutions agree at time $t<T$
then they also agree
on $[t, (t+\del)\wedge T]$ where $\del$ is as in Lemma~\ref{lem2}.
Recall $(\calQ^{[t]}, \nu^{[t]}, K^{[t]}, R^{[t]})$ from \eqref{18}.
By Lemma~\ref{time-shift}
$(\calQ^{[t]}, \nu^{[t]}, K^{[t]}, R^{[t]})$ satisfies the FME with given data
$(\calQ_t, \nu_t, \calE^{[t]}, X_t)$ on $[0, T-t]$.
Hence uniqueness on $[t, (t+\del)\wedge T]$ reduces to
uniqueness on $[0, (T-t)\wedge\del]$ with the given data.
The latter follows from Lemma~\ref{lem2}. This completes the proof.
\qed

\subsection{Uniqueness under Assumption~\ref{Assump4}}\label{uni-2}

Recall from Assumption~\ref{Assump4} that $\pi[0, \kappa_1]=0$.
Without loss of generality we assume $\kappa_1=2$. We will show that any two
solutions agree on $[0, 1]$ provided they agree at $t=0$.
Then the uniqueness on any interval follows by applying Lemma~\ref{time-shift}. Let
\begin{equation}\label{20}
 H(x, t) = \calQ_0(x, \iy) + \int_0^t\la_s \ind_{\{x\geq s\}} \pi(x-s, \iy)ds.
\end{equation}
We invoke the Skorohod problem with time-varying boundary. We set the boundary function to be
$b_t=H(t, t)$ for $t\geq 0$.
\begin{definition}
Given $\psi\in\C_{\R_+}$, a pair $(\phi, \eta)\in (\C_{\R_+})^2$, is said to solve the Skorohod problem on the time varying domain $(-\iy, b]$, if
\begin{enumerate}
\item[1.] $\phi_t=\psi_t-\eta_t, \quad \text{for all}\;\; t\geq 0$,

\item[2.] $\phi_t\leq b_t$ for all $t\geq 0$,
\item[3.] $\eta$ is non-negative, non-decreasing and $\int_0^\cdot\ind_{\{\phi_s<b_s\}}d\eta_s=0$.
\end{enumerate}
\end{definition}
It is known that there is a unique solution $(\phi, \eta)$ given data $\psi$
\cite{burdzy-kang-ramanan}.
It has also been observed in \cite{atar-bis-kaspi} that this type
of Skorohod problem shows up in the fluid limits of G/G/1+G queueing models with EDF scheduling.
We denote the above Skorohod map by $\Gam^b$, so that $\Gam^b_1[\psi]=\phi$ and $\Gam^b_2[\psi]=\eta$.
Let $(\calQ, \nu, K, R)$ be a solution to the FME \eqref{1000}--\eqref{fme9}.
Define
$$
\varrho=\inf\{t\geq 0\; : Q_t\leq H(3/2, t)\}\wedge 1.
$$

\begin{lemma}\label{lem4.9}
If $(\calQ, \nu, K, R)$ is a solution to the FME \eqref{1000}--\eqref{fme9} then
\begin{align}
R_s & = \Gam^b_2[Q_0+E-K](s)\quad \text{for}\; \; s\in[0, \varrho),\label{21}
\\
R_1-R_{\varrho} & =0.\label{22}
\end{align}
\end{lemma}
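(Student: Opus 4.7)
The plan is to prove \eqref{21} by verifying that $(Q,R)$, restricted to $[0,\varrho)$, solves the Skorohod problem on $(-\iy,b]$ with driver $\psi=Q_0+E-K$ and then invoking uniqueness of this problem. For \eqref{22}, the plan is to show $\calQ_t[0,3/2]=0$ throughout $[\varrho,1]$; combined with \eqref{fme9}, this rules out any growth of $R$.

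For \eqref{21}, the decomposition $Q=\psi-R$ is a restatement of \eqref{fme1}, and monotonicity and non-negativity of $R$ are given. Comparing \eqref{fme1} with condition~1 of Definition~\ref{D-mvsm} in the limit $x\to\iy$ forces $\iota\equiv0$. Applying condition~1 at $x=t$ together with \eqref{fme7} gives $K_t+R_t=\al_t[0,t]+\beta_t(t,\iy)$, whence $Q_t=\al_t[0,\iy)-(K_t+R_t)\leq\al_t(t,\iy)=b_t$, the barrier inequality. The substantive step is the reflection condition $\int_0^\cdot\ind_{Q_s<b_s}\,dR_s=0$ on $[0,\varrho)$.

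For this, I will exploit Assumption~\ref{Assump4}: since $\pi[0,2]=0$, one has $\al_p[0,x]=\calQ_0[0,x]$ for every $p\geq0$ and every $x\leq 2$. Relation \eqref{theta-gamma} (with $\iota\equiv0$) then yields
\begin{equation*}
\beta_s(x,\iy)=\sup_{p\leq s}[K_p+R_p-\calQ_0[0,x]]^+=[K_s+R_s-\calQ_0[0,x]]^+\qquad(x\leq 2).
\end{equation*}
At $dR$-a.e.\ $s\in[0,\varrho)$, \eqref{fme9} gives $\sigma_s=s$, so $\calQ_s[0,x]>0$ for every $x>s$. Feeding the displayed formula into condition~1 of the MVSP and splitting cases on the sign of $K_s+R_s-\calQ_0[0,x]$ forces $K_s+R_s<\calQ_0[0,x]$; letting $x\downarrow s$ and using right-continuity of $\calQ_0[0,\cdot]$ (atomlessness) yields $K_s+R_s\leq\calQ_0[0,s]$. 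The reverse inequality is already available from \eqref{fme7} via condition~1 at $x=s$, so equality holds, giving $\beta_s(s,\iy)=0$ and hence $Q_s=b_s$ at $dR$-a.e.\ $s$. Uniqueness of the time-varying Skorohod problem (from \cite{burdzy-kang-ramanan}) then delivers \eqref{21}.

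For \eqref{22}, on $[0,\varrho)$ one has $\calQ_s[0,3/2]\geq Q_s-\calQ_s(3/2,\iy)\geq Q_s-H(3/2,s)>0$, using $\calQ_s(3/2,\iy)\leq\al_s(3/2,\iy)=H(3/2,s)$. MVSP condition~2 at $x=3/2$ therefore keeps $\beta_t(3/2,\iy)$ pinned at $0$ throughout $[0,\varrho)$, and continuity of $t\mapsto\beta_t(3/2,\iy)$ (visible from the displayed formula) propagates this to $t=\varrho$, yielding $\calQ_\varrho(3/2,\iy)=H(3/2,\varrho)$. Together with $Q_\varrho\leq H(3/2,\varrho)$ this forces $\calQ_\varrho[0,3/2]=0$. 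For $t\in(\varrho,1]$, Assumption~\ref{Assump4} keeps $\al_t[0,3/2]$ constant in $t$ while $\beta_t[0,3/2]$ is non-decreasing, so $\calQ_t[0,3/2]=0$ as well. Hence $\sigma_t\geq 3/2>t$ on $(\varrho,1]$, and \eqref{fme9} gives $R_1-R_\varrho=0$. The main obstacle is the reflection identity in \eqref{21}: controlling the diagonal trace $\beta_s(s,\iy)$ of the MVSM requires Assumption~\ref{Assump4}, which decouples the $p$-dependence of $\al_p[0,x]$ on the relevant range and makes the case analysis tractable.
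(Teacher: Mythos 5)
Your proposal is correct and follows essentially the same route as the paper: both verify the three conditions of the time-varying Skorohod problem (the decomposition from \eqref{fme1}, the barrier bound $Q_t\le b_t$, and the complementarity obtained from \eqref{fme9} combined with the explicit one-dimensional Skorohod representation of $\calQ_\cdot[0,x]$ for $x$ below the minimal patience, where Assumption~\ref{Assump4} freezes $\al_\cdot[0,x]=\calQ_0[0,x]$) and then invoke uniqueness from \cite{burdzy-kang-ramanan}, while for \eqref{22} both show $\calQ_t[0,3/2]=0$ on $[\varrho,1]$ and conclude via \eqref{fme9}. The differences are only organizational: you prove $Q_s=b_s$ at $dR$-a.e.\ $s$ directly (and in fact on all of $[0,1)$), whereas the paper shows $\{Q_s<b_s\}\subset\{\sigma_s>s\}$ on $[0,\varrho)$ through the auxiliary frontier $\tilde\sigma_t$, and your propagation of $\calQ_t[0,3/2]=0$ past $\varrho$ uses the monotonicity of $\beta_t[0,3/2]$ instead of the paper's left-limit argument.
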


\begin{proof}
First we note that for $t\in [0, 1]$,
$$ b_t=H(t,t)= \calQ_0(t, \iy) + \int_0^t\la_s\ind_{\{t\geq s\}}\pi(t-s, \iy)=\calQ_0(t, \iy)+E_t,$$
where we used the fact that $\pi[\kappa_1, \infty)=\pi[2, \iy)=1$. Since $\calQ_t$ does not have atoms
and $\calQ_t[0, t)=0$ by \eqref{fme7} we have for $t\in[0, 1]$,
\begin{align}\label{23}
Q_t=\calQ_t(t, \iy) &=Q_t-\calQ_t[0, t]\nonumber
\\
&= Q_0 + E_t-K_t-R_t-\Gam_1[\al[0, t]-K-R](t)\nonumber
\\
&= Q_0 + E_t-K_t-R_t-(\al[0, t]-K_t-R_t)-\Gam_2[\al[0, t]-K-R](t)\nonumber
\\
&\leq Q_0 + E_t- \calQ_0[0, t]-\calE_t[0, t]=b_t,
\end{align}
where we have used the fact that $\Gam_2$ is non-negative valued and $\calE_t[0, t]=0$.
To show \eqref{21} we may assume $\varrho>0$, as otherwise there is nothing to prove.
Thus on $[0, \varrho)$, $Q_s> H(3/2, s)$ and
$$
H(3/2, s)= \calQ_0(3/2, \iy) +E_s.
$$
For $s\in [0, \varrho)$, we get from \eqref{fme1} that
\begin{align}\label{AB2}
0>H(3/2, s)-Q_s = \calQ_0(3/2, \iy) + E_s -\Big(Q_0+E_s-K_s-R_s\Big) =K_s+R_s-\calQ_0[0, 3/2].
\end{align}
From \eqref{AB2} we also note that if for some $t$ we have
$K_t+R_t-\calQ_0[0,3/2]<0$ then $K_s+R_s-\calQ_0[0,3/2]<0$ for all $s\leq t$ since
$K+R$ is non-decreasing.
Therefore we have $\varrho=\sup\{s\in[0, 1]\; :\; K_s+R_s< \calQ_0[0, 3/2]\}$.
Since $\varrho>0$ we have $\calQ_0[0, 3/2]>0$. Define
$$\tilde\sigma_t=\sup\{x\; :\; K_t+R_t\geq \calQ_0[0, x]\}.$$
We claim that for $t\in[0,\varrho), \tilde{\sigma}_t$ is equal to the infimum of the support of $\calQ_t$, namely,  $\sigma_t$ .  Since $\calQ_0$ is atomless,
\begin{equation}\label{24}
K_{t}+ R_{t}= \calQ_0[0, \tilde\sigma_t], \quad \text{for}\;\; t\in [0, \varrho).
\end{equation}
Fix $t\in[0, \varrho)$. It is enough to show that for any $x<\tilde\sigma_t$,
we have $\calQ_t[0, x]=0$ and for $x>\tilde\sigma_t$, we have
$\calQ_t[0, x]>0$. Now $t<\varrho$ implies that $Q_t> H(3/2, t)= \calQ_0(3/2, \infty) + E_t$, and
therefore by \eqref{fme1} we have $\calQ_0[0, 3/2]> K_t + R_t$. This implies
$\tilde\sigma_t<3/2$.
Hence it suffices to pick $x$ from $[0, 3/2]$. Take
$x\in [0, 3/2]$ and use \eqref{fme6} to obtain
\begin{equation}\label{25}
\calQ_t[0, x]=\calQ_0[0, x]  -K_t-R_t +\sup_{s\leq t}\Big(K_s+R_s-\calQ_0[0, x]\Big)^+.
\end{equation}
Since $K+R$ is non-decreasing, we see from \eqref{25} that for any $x<\tilde\sigma_t$, $\calQ_t[0, x]=0$. Again for $x> \tilde\sigma_t$ we have
$\sup_{s\leq t}\Big(K_s+R_s-\calQ_0[0, x]\Big)^+=0$ and therefore $\calQ_t[0, x]>0$. Thus the claim follows. Now for $t\in[0, \varrho)$,
\begin{align*}
b_t-Q_t= H(t, t)-Q_0-E_t+K_t+R_t &=\calQ_0(t, \iy)+E_t-Q_0-E_t +\calQ_0[0, \sigma_t]
\\
&= \calQ_0(t, \iy)-\calQ_0(\sigma_t, \iy),
\end{align*}
where we use \eqref{24}. Thus $b_t-Q_t>0$ implies $\sigma_t> t$ for $t\in [0, \varrho)$. Therefore using \eqref{fme9} we have
for $t\in [0, \varrho)$,
\begin{equation}\label{26}
\int_0^t\ind_{\{Q_s<b_s\}}\, dR_s\leq \int_0^t\ind_{\{\sigma_s>s\}}\, dR_s =0.
\end{equation}
Therefore \eqref{21} follows from \eqref{fme1}, \eqref{23} and \eqref{26}. Now we prove \eqref{22}. Without loss of generality, we assume $\varrho<1$,
otherwise there is nothing to prove. Since $\calQ_\varrho$ is in $\calM^0$ by \cite[Proposition~2.8]{atar-bis-kaspi-ram},
 \eqref{25} and \eqref{AB2} gives us
$\calQ_\varrho[0, 3/2]=\lim_{t\uparrow \rho} \calQ_t[0, 3/2]=0$. Thus by definition
$\sigma_\varrho=\inf\,\text{support}(\calQ_\varrho)\geq 3/2$. Therefore from
\eqref{theta-gamma} and \eqref{fme6} we have
$K_\varrho+ R_\varrho\geq \calQ_0[0, 3/2]$.
Now for any $t\in[\varrho, 1]$ and using \eqref{25} we get
\begin{align*}
\calQ_t[0, 3/2] &=\calQ_0[0, 3/2] -K_t-R_t+\sup_{s\leq t}\big(K_s+R_s-\calQ_0[0, 3/2]\big)^+
\\
&= \calQ_0[0, 3/2] -K_t-R_t + (K_t+R_t-\calQ_0[0, 3/2]\big) =0.
\end{align*}
Thus for $t\in[\varrho, 1]$ we have $\sigma_t\geq 3/2>t$. Hence from \eqref{fme9} we obtain $R_1-R_\varrho=0$.\qed
\end{proof}

\begin{proposition}\label{Thm4.5}
Let Assumption~\ref{Assump4} hold. If $(\calQ^i, \nu^i, K^i, R^i), i=1,2,$
solve the FME \eqref{1000}--\eqref{fme9} on $[0, T]$
with data $(\calQ_0, \nu_0, \al, X_0)$
then $(\calQ^1, \nu^1, K^1, R^1)=(\calQ^2, \nu^2, K^2, R^2)$ on $[0, T]$.
\end{proposition}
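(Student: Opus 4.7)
The plan is to reduce uniqueness on $[0,T]$ to uniqueness on $[0,1]$ via Lemma~\ref{time-shift} (Assumption~\ref{Assump4} is invariant under time shifts, and $\calQ_t\in\calM^0$ for any FME solution by \eqref{fme6} and Proposition~\ref{prop2.1}, so the same constant $\kappa_1=2$ works for every shifted problem), and then to split $[0,1]$ according to Lemma~\ref{lem4.9} into the two intervals $[0,\varrho)$ and $[\varrho,1]$, on each of which a Volterra-type argument forces $\Del K := K^1 - K^2 \equiv 0$.

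On $[0,\varrho)$ one has $Q_t > H(3/2,t) \ge 0$, hence $Q_t > 0$, hence by \eqref{fme2}--\eqref{fme3} the nonidling identity $\lan 1,\nu_t\ran = 1$ holds for both solutions. Inserting this into \eqref{fme4} and invoking Corollary~4.4 of \cite{kaspi-ramanan}, which yields
\[
D_t = d_0(t) + \int_0^t g(t-s) K_s\,ds, \qquad d_0(t) := \int_{[0,H^s)} \frac{G(x+t)-G(x)}{1-G(x)}\nu_0(dx),
\]
reduces \eqref{fme4} to the linear Volterra equation of the second kind
\[
K_t = \bigl(1 - \lan 1,\nu_0\ran\bigr) + d_0(t) + \int_0^t g(t-s) K_s\, ds,
\]
valid on $[0,\varrho^1\wedge\varrho^2)$ for both solutions. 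Since $g\in L^1_{\mathrm{loc}}$, this equation has a unique locally bounded solution, so $K^1 = K^2$ on that interval. Lemma~\ref{lem3.2} then forces $\nu^1 = \nu^2$, \eqref{21} forces $R^1 = R^2$, and \eqref{fme6} forces $\calQ^1 = \calQ^2$. Since $Q^i$ is continuous, this yields $\varrho^1 = \varrho^2 =: \varrho$ and agreement at $\varrho$.

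On $[\varrho,1]$, \eqref{22} gives $R^i_t = R^i_\varrho$, hence $\Del R \equiv 0$ on $[\varrho,1]$ (using $\Del R_\varrho = 0$ from the previous step). Then \eqref{101} yields $\Del X_t = -\Del D_t$. The identity $\lan 1,\nu^i_t\ran = X^i_t \wedge 1$, a direct consequence of \eqref{fme2}--\eqref{fme3}, combined with the $1$-Lipschitz property of $x\mapsto x\wedge 1$, gives
\[
|\Del \lan 1,\nu_t\ran| \le |\Del X_t| = |\Del D_t|,
\]
regardless of which regime (non-idling, $X_t \ge 1$, or idle, $X_t < 1$) each of the two solutions occupies at time $t$. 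Combining with \eqref{fme4}, with Corollary~4.4 of \cite{kaspi-ramanan}, and with $\Del K \equiv 0$ on $[0,\varrho)$ already established, we obtain
\[
|\Del K_t| \;=\; |\Del \lan 1,\nu_t\ran + \Del D_t| \;\le\; 2|\Del D_t| \;\le\; 2\int_\varrho^t g(t-s)\, |\Del K_s|\,ds, \qquad t\in[\varrho,1].
\]
A standard Gronwall-type argument for Volterra inequalities with $L^1$ kernel (iterated on sub-intervals of length $\eps$ so small that $G(\eps) < 1/2$) gives $\Del K \equiv 0$ on $[\varrho,1]$. Lemma~\ref{lem3.2}, \eqref{22} and \eqref{fme6} then close the argument on $[0,1]$.

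The hard part is the analysis on $[\varrho,1]$: the system need not remain non-idling there, and the two candidate solutions could in principle occupy different regimes simultaneously, which would normally demand a tedious case analysis. The key observation that bypasses this difficulty is that the $1$-Lipschitz cap $x\mapsto x\wedge 1$ in the representation $\lan 1,\nu_t\ran = X_t \wedge 1$ produces a single Volterra inequality valid in all four regime combinations, so the same Gronwall-Volterra mechanism applies as on the non-idling interval.
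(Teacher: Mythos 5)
Your proof is correct and follows the same skeleton as the paper's: reduce to $[0,1]$ via Lemma~\ref{time-shift}, split at $\varrho$ using Lemma~\ref{lem4.9}, obtain $K^1=K^2$ on $[0,\varrho^1\wedge\varrho^2]$ from the renewal (Volterra) equation that \eqref{fme4} and Corollary~4.4 of \cite{kaspi-ramanan} produce on the busy interval, conclude $\varrho^1=\varrho^2$ by the definition of $\varrho$ and continuity, and use \eqref{22} to kill the reneging term on $[\varrho,1]$. The only place you deviate is the interval $[\varrho,1]$: the paper shifts by $\varrho$ and simply invokes the uniqueness theorem for the no-reneging many-server fluid equations (Theorem~4.6 of \cite{kaspi-ramanan}), whereas you re-derive that uniqueness inline, via $\lan 1,\nu_t\ran=X_t\wedge 1$, the resulting $1$-Lipschitz bound $|\Del\lan1,\nu_t\ran|\le|\Del X_t|=|\Del D_t|$, and the Volterra inequality $|\Del K_t|\le 2\int_\varrho^t g(t-s)|\Del K_s|\,ds$ iterated on subintervals with $G(\eps)<1/2$. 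Your route is more self-contained and handles idle/busy regime switches without case analysis (also subsuming the paper's Case 1/Case 2 split), at the cost of reproving a known result; the paper's citation is shorter but leans on the external theorem. Both arguments rest on the same mechanism (the renewal representation of $D$ in terms of $K$), so the difference is one of packaging rather than substance.
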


\begin{proof}
As we commented earlier, we show the uniqueness
on $[0, 1]$ (keep in mind that $\kappa_1=2$) and then one can apply Lemma~\ref{time-shift}
to extend the time interval. We distinguish between two cases.

\noindent{\bf Case 1.} $\calQ_0[0, 3/2]>0$. Then $\varrho>0$ where $\varrho$ is
as in Lemma~\ref{lem4.9}.
Let $\varrho^i$ be the time corresponding to the $i$-th system.
As shown in Lemma~\ref{lem4.9}, we have that
for $s\in[0, \varrho^i)$, $K^i_s+R^i_s<\calQ_0[0, 3/2]$.
Thus for $i=1,2$, on $[0, \varrho^i)$, we have
\begin{equation}\label{27}
Q^i_s= Q_0 +E_s-K^i_s-R^i_s>\calQ_0(3/2, \iy) + E_s\geq 0.
\end{equation}
Thus the system has a busy period in $[0, \varrho^i]$.
We apply \cite[Corollay 4.4]{kaspi-ramanan} by which
\begin{equation}\label{28}
K^i_s=\mean{1, \nu^i_s}-\mean{1, \nu_0} + \int_{[0, H^s)}\frac{G(x+s)-G(x)}{1-G(x)}\nu_0(dx) + \int_0^t g(t-s)K^i_s ds.
\end{equation}
On $[0, \varrho^i]$ we have $\mean{1, \nu^i_s}=1$ by \eqref{27} and therefore from \eqref{28}
we obtain
\begin{align*}
K^i_s= \int_{[0, H^s)}\frac{G(x+s)-G(x)}{1-G(x)}\nu_0(dx) + \int_0^t g(t-s)K^i_s ds, \quad \text{for}\; \; s\in [0, \varrho^i].
\end{align*}
However, the above is a renewal equation and therefore has a unique solution
\cite[Th 5.2.4]{asmussen}
Therefore $K^1=K^2$ on $[0, \varrho^1\wedge\varrho^2]$. Thus by Lemma~\ref{lem3.2} we have $\nu^1=\nu^2$ on $[0, \varrho^1\wedge\varrho^2]$.
Applying Lemma~\ref{lem4.9} we obtain
$(\calQ^1, \nu^1, K^1, R^1)=(\calQ^2, \nu^2, K^2, R^2)$ on $[0, \varrho^1\wedge\varrho^2]$.
Again, this would contradict the definition of $\varrho$ unless
we have $\varrho^1=\varrho^2$.
It remains to prove that the equality holds on $[\varrho, 1]$.
We already know from Lemma~\ref{lem4.9} that
$R^i_1-R^i_{\varrho}=0$ for $i=1,2$.
Thus to show equality we only need to show that
$(\nu^1, K^1)=(\nu^2, K^2)$ on $[\varrho, 1]$.
To this end, we consider
the shifted solutions $(\nu^{i,[\varrho]}, K^{i,[\varrho]})$
for $i=1,2$, defined as in \eqref{18}.
Since on $[\varrho, 1]$ the system behaves
like a system without reneging the equality follows from \cite[Theorem 4.6]{kaspi-ramanan}.

\noindent{\bf Case 2.} Let $\calQ_0[0, 3/2]=0$. In this case $\varrho^i=0$,
and by Lemma~\ref{lem4.9}, $R^i_1=0$ for $i=1,2$. The equality now follows
by similar arguments to those of case 1.
\qed
\end{proof}

\section{Tightness of the scaled processes}\label{sec-tight}
\beginsec

In this section we establish tightness of the scaled processes defined in Section~\ref{main-result}. First we introduce a family of martingales that plays
a crucial role in this proof (following ideas of \cite{kaspi-ramanan, kang-ramanan}). For any bounded measurable function $\varphi$ defined on $[0, H^s)\times\R_+$, let
\begin{equation}\label{30}
A^N_t(\varphi) = \int_0^t\int_{[0, H^s)}\varphi(x, s)h(x)\nu^N_s(dx)\, ds.
\end{equation}

\begin{lemma}\label{lem5.1}
For every bounded measurable function $\varphi$ defined on $[0, H^s)\times\R_+$,
the process $\calM^N(\varphi)$ defined by
\begin{equation}\label{31}
 \calM^N(\varphi)= D^N(\varphi)-A^N(\varphi),
\end{equation}
where $D^N(\varphi)$ is given by \eqref{9},
is a local $\calF^N_t$-martingale. Moreover, for every $N\in\N,\, t\in[0, \iy)$ and $c\in [0, H^s)$,
\begin{equation}\label{32}
\big| A^N_t(\varphi)\big| \leq \|\varphi\|_{\iy}(X^N_0 + E^N_t)\int_0^ch(x)dx < \iy,
\end{equation}
for any $\varphi\in\C_c([0, H^s)\times\R_+)$ with support$(\varphi)\subset[0, c]\times\R_+$. In addition, the
predictable quadratic variation process $\mean{\bar\calM^N(\varphi)}$
of $\bar\calM^N(\varphi)=\frac{1}{N}\calM^N(\varphi)$ satisfies
\begin{equation}\label{33}
\lim_{N\to\iy}\sup_{s\in[0, t]}\E[\mean{\bar\calM^N_s(\varphi)}]=0, \quad \bar\calM^N(\varphi)\Ra 0,
\end{equation}
for every $\varphi\in\C_b([0, H^s)\times\R_+)$.
\end{lemma}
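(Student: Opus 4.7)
The plan is to exploit the hazard-rate interpretation of service completion: conditionally on $\{\calF^N_t\}$, each customer currently in service with age $\omega^N_i(t)=x$ completes service at rate $h(x)$, because the service times $\{v_j\}$ are i.i.d.\ with density $g$ and independent of the other primitives. First I would decompose
\[
D^N_t(\varphi)=\sum_{i\in\calI^N}\int_{(0,t]}\varphi(\omega^N_i(s-),s)\,dN^N_i(s),
\]
where $N^N_i(t)=\ind_{\{\gamma^N_i\leq t,\,\omega^N_i(t)=v_i\}}$ is the one-jump counting process for customer $i$'s departure, and show, along the lines of the proof of Lemma~5.9 in \cite{kaspi-ramanan}, that the $\{\calF^N_t\}$-compensator of $N^N_i$ is $\int_0^{\cdot}h(\omega^N_i(s))\ind_{\{\gamma^N_i\leq s,\,\omega^N_i(s)<v_i\}}\,ds$. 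Summing over $i$ and using the definition \eqref{1} of $\nu^N$ identifies $A^N(\varphi)$ as the compensator of $D^N(\varphi)$, so that $\calM^N(\varphi)$ is a local $\{\calF^N_t\}$-martingale. Since $h$ may be unbounded near $H^s$, the argument is carried out under localization at the stopping times $\tau_k=\inf\{t\geq 0:\max_i\omega^N_i(t)\geq c_k\}$ with $c_k\uparrow H^s$, invoking Assumption~\ref{Assump2} for the local integrability of $h$.

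The bound \eqref{32} then follows from a customer-by-customer estimate. If $\mathrm{supp}(\varphi)\subset[0,c]\times\R_+$, then for each $i\in\calI^N$ with $\gamma^N_i\leq t$, the change of variable $x=s-\gamma^N_i$ gives
\[
\int_0^t|\varphi(\omega^N_i(s),s)|\,h(\omega^N_i(s))\,\ind_{\{\omega^N_i(s)<v_i,\,s\geq\gamma^N_i\}}\,ds\leq\|\varphi\|_\iy\int_0^{c}h(x)\,dx,
\]
and summing over the at most $X^N_0+E^N_t$ customers ever admitted into service by time $t$ yields \eqref{32}.

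For the convergence statement, the general theory of compensated jump martingales provides $\mean{\calM^N(\varphi)}_t=A^N_t(\varphi^2)$, hence
\[
\mean{\bar\calM^N(\varphi)}_t=N^{-2}A^N_t(\varphi^2)\leq N^{-2}\|\varphi\|_\iy^{2}\,A^N_t(1).
\]
To bound $\E[A^N_t(1)]$, I would apply optional stopping to $\calM^N(1)$ at $\sigma_n=\inf\{s:A^N_s(1)\geq n\}$: on $[0,t\wedge\sigma_n]$ one has $A^N(1)\le n$ and $D^N\leq X^N_0+E^N_t$, so the stopped local martingale is uniformly integrable, giving $\E[A^N_{t\wedge\sigma_n}(1)]=\E[D^N_{t\wedge\sigma_n}]\leq\E[X^N_0+E^N_t]$; letting $n\to\iy$ and invoking monotone convergence together with Assumption~\ref{Assump1} then yields
\[
\sup_{s\in[0,t]}\E\bigl[\mean{\bar\calM^N(\varphi)}_s\bigr]\leq N^{-1}\|\varphi\|_\iy^{2}\,\E[\bar X^N_0+\bar E^N_t]\longrightarrow 0.
\]
The claim $\bar\calM^N(\varphi)\Ra 0$ then follows from Doob's $L^2$ maximal inequality applied on each finite interval.

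The main obstacle is the rigorous derivation of the compensator in the first step, because $h$ need not be bounded near $H^s$; the remedy is the localization indicated above, combined with Assumption~\ref{Assump2}. The argument parallels the FCFS case treated in \cite{kaspi-ramanan}, since the EDF rule affects only the admittance times $\gamma^N_i$ and not the age-evolution or service-completion mechanism that enters the martingale calculation.
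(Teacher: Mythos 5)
Your proposal is correct and follows essentially the same route as the paper, which simply defers all three claims to the corresponding results of \cite{kaspi-ramanan} (the hazard-rate compensator argument for the local martingale property, the customer-by-customer change-of-variables bound, and the quadratic-variation estimate $\mean{\bar\calM^N(\varphi)}=N^{-2}A^N(\varphi^2)$ with $\E[A^N_t(1)]\le\E[X^N_0+E^N_t]$). You also address, via the independence of service times from the other primitives and the observation that EDF only alters the admittance times $\gamma^N_i$, the one point the paper singles out, namely that the filtration here is enlarged by the deadline information, so nothing further is needed.
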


\begin{proof}
The proof of the local martingale property of $\calM^N(\varphi)$ follows using the argument
in \cite[Lemma 5.4 and Corollary 5.5]{kaspi-ramanan}. The filtration used in \cite{kaspi-ramanan} is
smaller than the one considered here.
We have one added element in our filtration $\{\calF^N_t\}$, corresponding to the deadline information. But the deadlines $u^N$ are independent
of the service requirement process. Therefore one can apply conditional independence
in a straightforward fashion to obtain the local martingale property of $\calM^N(\varphi)$. A similar argument by independence
is also used in \cite[Proposition 5.1]{kang-ramanan}, where one can check
for the calculations.
The proof of \eqref{32} can be established following
\cite[Proposition 5.7]{kaspi-ramanan}. For the proof of \eqref{33} we refer to \cite[Lemma~5.9]{kaspi-ramanan}.
\qed
\end{proof}

\skp

A sequence of processes with sample paths in $\D_\calS$, $\calS$ being a Polish
space, is said to be {\it $\C$-tight}
if it is tight in $\D_\calS$ (in the $J_1$ topology) and, in addition,
any subsequential limit has, a.s., paths in $\C_\calS$. The following
characterization will be useful.

\skp

\noi
{\bf Characterization of $\C$-tightness for processes with sample paths in
$\D_{\R}$} (see
Proposition VI.3.26 of \cite{jacod-shiryaev}):
{\it
$\C$-tightness of a sequence of processes $Z^N$ is equivalent to
\begin{itemize}
\item[C1.] The sequence of random variables
$\|Z^N\|_T$ is tight for every fixed $T<\iy$, and
\item[C2.] For every $T<\iy$,
$\eps>0$ and $\eta>0$ there exist $N_0$ and $\theta>0$ such that
\[
N\ge N_0 \text{ implies } \p(\osc_\theta(Z^N,T)>\eta)<\eps.
\]
\end{itemize}
}

\begin{lemma}\label{lem5.2}
Let Assumption~\ref{Assump1} hold. Then,
for $\bar{Z}^N=\bar{K}^N, \bar{X}^N, \bar{R}^N, \mean{1, \bar{\nu}^N}$, the sequence $\{\bar{Z}^N\}$
and the sequences $\{\bar{D}^N(\varphi)\}, \{\bar{A}^N(\varphi)\}$, for every $\varphi\in\C_{b, +}([0, H^s)\times\R_+)$, are $\C$-tight.
\end{lemma}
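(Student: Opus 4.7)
The plan is to verify the characterization (C1)+(C2) of $\C$-tightness for each of the seven processes in the lemma, structuring the argument so that the nontrivial oscillation estimates cascade from three ingredients: the martingale vanishing $\bar\calM^N(\varphi)\To 0$ of Lemma~\ref{lem5.1}, the absolute continuity of the integrated hazard function, and the continuity of the limit $\al$ supplied by Assumption~\ref{Assump1}.

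The uniform bounds (C1) are immediate. From the balance equation \eqref{4}, $\bar X^N_t+\bar D^N_t+\bar R^N_t=\bar X^N_0+\bar E^N_t$, so each of the three non-negative processes on the left is pointwise dominated by $\bar X^N_0+\bar E^N_T$, which has uniformly bounded expectation by Assumption~\ref{Assump1}. Then $\lan1,\bar\nu^N_t\ran=\bar X^N_t\wedge 1\leq 1$ from \eqref{5}, $\bar K^N$ is controlled via \eqref{7}, $|\bar D^N_t(\varphi)|\leq\|\varphi\|_\infty\bar D^N_t$ trivially, and the martingale identity of Lemma~\ref{lem5.1} gives $\E\bar A^N_t(\varphi)=\E\bar D^N_t(\varphi)\leq\|\varphi\|_\infty\E[\bar X^N_0+\bar E^N_T]$. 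Markov's inequality then yields (C1) for all seven processes.

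For oscillation, I would first treat $\bar A^N(\varphi)$ with $\varphi\in\C_c$ supported in $[0,c]\times\R_+$. Changing the integration variable from time to age along each customer's service trajectory,
\begin{equation*}
\bar A^N_t(\varphi)-\bar A^N_s(\varphi)\leq \frac{\|\varphi\|_\infty}{N}\sum_{i}\int_{a_i^-}^{a_i^+}h(w)\ind_{[0,c]}(w)\,dw,
\end{equation*}
where $a_i^\pm$ are the ages of customer $i$ at times $s,t$ (or the service requirement $v_i$ if departure intervenes), so $a_i^+-a_i^-\leq t-s$. Since $\int_0^c h<\infty$, the indefinite integral is absolutely continuous, producing a deterministic modulus $\omega(\theta)\downarrow 0$; bounding the number of summands by $X^N_0+E^N_T$ gives $\osc_\theta(\bar A^N(\varphi),T)\leq\|\varphi\|_\infty\omega(\theta)(\bar X^N_0+\bar E^N_T)$, hence (C2). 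Together with $\bar\calM^N(\varphi)\To 0$ this yields $\C$-tightness of both $\bar A^N(\varphi)$ and $\bar D^N(\varphi)$ for compactly supported $\varphi$. For general $\varphi\in\C_{b,+}$, I would split $\varphi=\varphi\chi_M+\varphi(1-\chi_M)$ with $\chi_M\in\C_c$ equal to $1$ on $[0,M]$; the tail is controlled by
\begin{equation*}
\E\bar D^N_T(\varphi(1-\chi_M))\leq\|\varphi\|_\infty\E[\bar X^N_0+\bar E^N_T]\,\PP(v_1>M),
\end{equation*}
since $(1-\chi_M)$ vanishes for ages below $M$ and a customer's age at departure equals its service requirement, and the same bound propagates to the compensator by the martingale identity. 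Letting $M\uparrow H^s$ makes the tail uniformly negligible, giving $\C$-tightness of $\bar A^N(\varphi)$ and $\bar D^N(\varphi)$ for all $\varphi\in\C_{b,+}$. The choice $\varphi=1$ gives $\C$-tightness of $\bar D^N$.

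The crucial remaining step is oscillation of $\bar R^N$. The structural observation is that only customers with absolute deadlines in $(s,t]$ can renege during $(s,t]$, and each such customer must have arrived by time $t$, so $\bar R^N_t-\bar R^N_s\leq\bar\al^N_t(s,t]$ for $s<t$. By Assumption~\ref{Assump1} and Skorohod representation, $\bar\al^N\to\al$ uniformly on $[0,T]$ in $d_\calM$ (because the limit lies in $\C_\calM$); the atomlessness of $\al_t$ guaranteed by $\al\in\C^{\uparrow,0}_\calM$, combined with continuity in $t$, promotes this to $\sup_{t\leq T,x\in\R_+}|\bar\al^N_t[0,x]-\al_t[0,x]|\to 0$ a.s.\ via a Polya-type argument. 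Joint continuity and diagonal-vanishing of $(s,t)\mapsto\al_t(s,t]$ on the compact $\{0\leq s\leq t\leq T\}$ yield a deterministic modulus $\rho(\theta)\downarrow 0$, delivering (C2) for $\bar R^N$. The remaining processes then follow by linear combinations: $\bar X^N=\bar X^N_0+\bar E^N-\bar D^N-\bar R^N$ from \eqref{4}, $\lan 1,\bar\nu^N\ran=\bar X^N\wedge 1$ from \eqref{5}, and $\bar K^N=\lan 1,\bar\nu^N\ran-\lan 1,\bar\nu^N_0\ran+\bar D^N$ from \eqref{7}. The main technical hurdle is the $\bar R^N$ oscillation: the structural inequality is easy, but converting $\D_\calM$-convergence into uniform-in-$(s,t,N)$ control of the half-open intervals requires carefully exploiting the atomlessness and continuity of $\al$; a secondary subtlety is the tail estimate extending (C2) from $\C_c$ to $\C_{b,+}$, which is handled through the distributional identity furnished by the martingale property.
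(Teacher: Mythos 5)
Most of your argument coincides with the paper's: the moment bounds \eqref{34}--\eqref{35} via the balance equations and the martingale identity of Lemma~\ref{lem5.1} give C1; the deadline-counting inequality $\bar R^N_t-\bar R^N_s\le\bar\al^N_t(s,t]$ together with the uniform convergence of $\bar\al^N$ to the continuous, atomless limit $\al$ (Assumption~\ref{Assump1}) gives C2 for $\bar R^N$; and $\bar X^N,\ \mean{1,\bar\nu^N},\ \bar K^N$ follow from \eqref{4}, \eqref{5}, \eqref{7}. Where you genuinely depart from the paper is the oscillation control for $\bar A^N(\varphi)$, $\bar D^N(\varphi)$: the paper simply invokes Lemmas~5.8(2) and 5.12 of \cite{kaspi-ramanan} to get \eqref{36}, while you attempt a self-contained proof (change of variables along age trajectories for compactly supported $\varphi$, then truncation). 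The compact-support part of your argument is fine.

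The gap is in the tail estimate used to pass from $\C_c$ to $\C_{b,+}$. The bound $\E\,\bar D^N_T(\varphi(1-\chi_M))\le\|\varphi\|_\iy\,\E[\bar X^N_0+\bar E^N_T]\,\PP(v_1>M)$ is justified (via independence and a Wald-type argument) only for customers arriving after time $0$. For customers initially in service it fails: the event of being in service at time $0$ with age $x$ is itself determined by $\{v_i>x\}$, so the relevant per-customer tail is the residual quantity $(1-G(M\vee x))/(1-G(x))$, which can be arbitrarily close to $1$ when the initial ages sit near $M$, and no independence between $\{v_i,\,i\le0\}$ and $(X^N_0,\nu^N_0)$ is assumed in the model. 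Hence the displayed inequality, and with it the uniform-in-$N$ negligibility of the tail, is not established as written. The step is repairable: for instance, an initial customer departing in $[0,T]$ with departure age exceeding $M$ must have had age exceeding $M-T$ at time $0$, so its contribution is bounded by $\bar\nu^N_0((M-T,H^s))$, which is controlled uniformly in $N$ (in probability) by the convergence $\bar\nu^N_0\To\nu_0$ in $\calM_1([0,H^s))$; alternatively one can cite \cite{kaspi-ramanan} as the paper does, whose Lemma~5.8 performs exactly this kind of initial-condition estimate. Without such an argument your extension to general bounded $\varphi$ is incomplete.
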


\begin{proof}
Fix $T\in (0, \iy)$. By Lemma~\ref{lem5.1} we have
for some increasing
sequence of stopping times $\hat\tau_n$, $\hat\tau_n\to\infty,$ that
$\E[\bar{A}^N_{T\wedge\hat\tau_n}(\varphi)]=\E[\bar{D}^N_{T\wedge\hat\tau_n}(\varphi)]$. Therefore
by monotone convergence, $\E[\bar{A}^N_{T}(\varphi)]=\E[\bar{D}^N_{T}(\varphi)]$.
Using the final assertion of Assumption \ref{Assump1},
\begin{equation}\label{34}
\sup_N\E[\bar{A}^N_T(\varphi)]=\sup_N\E[\bar{D}^N_T(\varphi)]\leq \|\varphi\|_\iy
\sup_N\E[\bar{X}^N_0 + \bar{E}^N_T]<\iy.
\end{equation}
Similarly, using \eqref{3}--\eqref{5} we obtain
\begin{align}\label{35}
\sup_{N}\E[\bar{U}^N_T]
& \leq \sup_N\E[\bar{X}^N_0 + \bar{E}^N_T]<\iy,
\\
&\quad\,\quad \text{for}\;
\bar{U}^N_T=\bar{K}^N_T, \bar{R}^N_T, \sup_{[0, T]}\mean{1, \bar{\nu}^N},
\sup_{[0, T]}\bar{X}^N.\nonumber
\end{align}
Since $\bar{A}^N_T(\varphi), \bar{D}^N_T(\varphi), \bar{K}^N, \bar{R}^N$ are non-decreasing for $\varphi\in\C_{b, +}([0, H^s)\times\R_+)$, criterion
$C1$ follows from \eqref{34} and \eqref{35}. Now we show that criterion C2 also holds. We start with $\bar{R}^N$.
Fix $\eps, \eta\in (0, \iy)$. Let $\theta>0$ and $s, t\in[0, T],s<t$
be such that $t\leq s+\theta$. We estimate $R^N_t-R^N_s$. Recall $\calE^N$ from \eqref{00}. Then
\begin{align*}
R^N_t-R^N_s &\leq \calQ^N_s[s, t] +\calE^N_t[s, t]-\calE_s^N[s, t]
\\
& \leq \al^N_s[s, t] + \calE^N_t[t-\theta, t].
\end{align*}
By Assumption~\ref{Assump1} we have $\sup_{t\in[0, T]}d_\calM(\bar\al^N_t, \al_t)\Ra 0$
and $\sup_{t\in[0, T]} d_\calM(\bar\calE^N_t, \calE_t)\Ra 0$ as $N\to\iy$.
Since $\al\in\C^{\uparrow, 0}_{\calM}$, we see that $x\mapsto \al_t[0, x]$ is uniformly continuous on $\R_+$, uniformly with respect to $t\leq T$.
On the other hand we also have
$$d_\calM(\bar\al^N_t, \al_t)\leq \sup_{x\in[0, \infty)}|\bar\al^N_t[0, x]-\al_t[0, x]| \leq d_\calM(\bar\al^N_t, \al_t) +
\osc_{2 d_\calM(\bar\al^N_t, \al_t)}(\al_t[0, \cdot], T).$$
Similar fact also holds true for $\bar\calE^N, \calE$. Therefore we can find $\theta>0$ so that
$$\lim_{N\to\iy}\p(\sup_{s\in[0, T]}\bar\al^N_s[s, s+\theta] + \sup_{t\in[0, T]}\bar\calE^N_t[t-\theta, t]>\eta)=0.$$
Combining the above two displays we see that $\bar{R}^N$ satisfies criterion C2.
Now, from \cite[Lemma~5.8(2), Lemma~5.12]{kaspi-ramanan} we obtain that,
for any $\varphi\in\C_{b, +}([0, H^s)\times\R_+)$,
\begin{equation}\label{36}
\lim_{\del\to 0}\limsup_{N\to\iy}\E\Big[\sup_{t\in[0, T]}(\bar{A}^N_{t+\del}(\varphi)-\bar{A}^N_{t}(\varphi))\Big]=0.
\end{equation}
Combining \eqref{36} with \eqref{33} we see that both the sequences $\{\bar{D}^N(\varphi)\}, \{\bar{A}^N(\varphi)\}$ satisfy C2 for
every $\varphi\in\C_{b, +}(\R_+)$. In particular, $\{\bar{D}^N(1)\}=\{\bar{D}^N\}$
also satisfies C2. Since
\begin{align*}
|\bar{X}^N_t-\bar{X}^N_s|& \leq |\bar{E}^N_t -\bar{E}^N_s|+|\bar{D}^N_t-\bar{D}^N_s|+|\bar{R}^N_t-\bar{R}^N_s|,
\\
|\mean{1, \bar{\nu}^N_t}-\mean{1, \bar{\nu}^N_s}| &\leq |\bar{X}^N_t-\bar{X}^N_s|,
\end{align*}
using \eqref{4} and \eqref{5}, we see that $\{\bar{X}^N\}$ and $\{\mean{1, \bar\nu^N}\}$
also satisfy C2. finally, the sequence $\bar{K}^N$ satisfies C2 by \eqref{7}.
\qed

\end{proof}

Recall the metric $d_\calM$ on $\calM$ from our notation.
\begin{lemma}\label{lem5.3}
Suppose that Assumption~\ref{Assump1} holds. For every $\eps, \eta$ and $T\in (0, \iy)$
there exist $\del\in(0, \iy)$ and $N_0$ such that
$$\text{for all}\;\; N\geq N_0, \quad \p\Big(\sup_{0\leq s\leq t\leq s+\del\leq T } d_\calM(\bar\nu^N_s, \bar\nu^N_t)>\eta\Big)<\eps. $$
\end{lemma}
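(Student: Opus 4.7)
The plan is to apply the MSTE \eqref{12} with a finite family of time-independent, compactly supported smooth test functions, combine this with the $\C$-tightness of $\bar K^N$, $\bar D^N(\varphi)$ and $\mean{1,\bar\nu^N}$ from Lemma~\ref{lem5.2}, and convert the resulting pointwise oscillation bounds into a Prohorov-metric oscillation estimate through a tail-plus-grid argument.

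First, I would control the tail. Fix $\eps>0$. Because $\bar\nu^N_t$ charges only ages of customers currently in service, any customer contributing to $\bar\nu^N_t([c,H^s))$ must have service requirement $v_i>c$, so by \eqref{1} this mass is dominated, uniformly in $t\in[0,T]$, by $N^{-1}$ times the number of customers initially in service together with those admitted in $[0,T]$ whose $v_i$ exceeds $c$. Combining $\sup_N\E\bar\al^N_T[0,\iy)<\iy$ from Assumption~\ref{Assump1}, the tightness of $\bar K^N_T$ from Lemma~\ref{lem5.2}, and the strong law for the i.i.d.\ sequence $\{v_i\}$ (independent of arrival and station information), this yields $c=c(\eps)<H^s$ and $N_1$ such that
\[
\p\Bigl(\sup_{t\in[0,T]}\bar\nu^N_t([c,H^s))>\eps\Bigr)<\eps,\qquad N\ge N_1.
\]

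Next, for any $\varphi\in\C^1_c([0,H^s))$ with $\supp\varphi\subset[0,c]$, I would apply \eqref{12} with the time-independent choice $\varphi(x,t)\equiv\varphi(x)$ to obtain
\[
\mean{\varphi,\bar\nu^N_t}-\mean{\varphi,\bar\nu^N_s}=\int_s^t\mean{\varphi',\bar\nu^N_u}\,du-\bigl(\bar D^N_t(\varphi)-\bar D^N_s(\varphi)\bigr)+\varphi(0)\bigl(\bar K^N_t-\bar K^N_s\bigr).
\]
The first summand is bounded by $\|\varphi'\|_\iy(t-s)\sup_{u\le T}\mean{1,\bar\nu^N_u}$, whose supremum is tight; the remaining two summands have $\C$-tight oscillation directly from Lemma~\ref{lem5.2}. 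Hence $\osc_\del(\mean{\varphi,\bar\nu^N_\cdot},T)$ can be made arbitrarily small with arbitrarily high probability by taking $\del$ small and $N$ large.

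To finish, pick a finite grid $0=x_0<x_1<\cdots<x_m=c$ of mesh smaller than $\eta$ and let $\{\varphi_j\}_{j=1}^m\subset\C^1_c([0,H^s))$ be smooth $[0,1]$-valued approximations of $\ind_{[0,x_j]}$ supported in $[0,c]$. A standard estimate for the Prohorov metric on finite measures on $[0,H^s)$ of bounded mass shows that whenever $\mu$ and $\nu$ have small mass on $[c,H^s)$ and $\max_j|\mean{\varphi_j,\mu}-\mean{\varphi_j,\nu}|$ is small enough, then $d_\calM(\mu,\nu)<\eta$. A union bound over the finitely many $\varphi_j$, together with the tail estimate and the oscillation bound of the previous paragraph, yields the lemma. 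The main technical point is this last step: translating oscillation control against finitely many smooth linear functionals (plus a uniform tail bound) into uniform-in-$t$ Prohorov oscillation of the measure-valued process; everything else is a routine combination of Lemma~\ref{lem5.2} with the MSTE identity \eqref{12}.
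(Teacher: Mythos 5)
Your proposal is correct in outline but takes a genuinely different route from the paper. The paper argues directly at the level of sets, with no test functions and no tail control: for a closed set $F$, its $\eps_1$-enlargement $F^{\eps_1}$, and $s\le t\le s+\eps_1/2$, the fact that ages grow at unit rate gives the pathwise bounds $\bar\nu^N_t(F)-\bar\nu^N_s(F^{\eps_1})\le \bar K^N_t-\bar K^N_s$ and $\bar\nu^N_s(F)-\bar\nu^N_t(F^{\eps_1})\le \bar D^N_t-\bar D^N_s$ (a customer counted at $t$ but not in $F^{\eps_1}$ at $s$ must have been admitted in $(s,t]$; one counted at $s$ but not at $t$ must have departed), and then the $\C$-tightness of $\bar K^N$ and $\bar D^N$ from Lemma~\ref{lem5.2} yields the Prohorov oscillation bound straight from the definition of $d_\calM$; compact containment is not needed for this lemma at all (it enters only in Lemma~\ref{lem5.4}, by citation to \cite{kaspi-ramanan}). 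Your route---\eqref{12} applied to finitely many smooth approximate indicators, a uniform tail bound, and a grid-to-Prohorov conversion---can be made to work: the conversion is legitimate because a closed set meets at most $m$ grid cells, so the accumulated error is at most $m$ times the functional error with $m$ fixed by $\eta$ and $c$ before $\del$ and $N$ are chosen, and the enlargement you incur is about twice the mesh plus the smoothing width (so take mesh below $\eta/4$ rather than $\eta$). The one genuinely soft step is your tail estimate: what you need is exactly the compact containment property of \cite[Lemma~5.12]{kaspi-ramanan}, and your justification via the strong law for $\{v_i\}$ does not cover the customers initially in service, whose service times are not independent of the initial data (they must exceed the initial ages, and the stated independence assumption does not include $X^N_0$, $\nu^N_0$), nor the case $H^s<\iy$, where the relevant tail sits near $H^s$ and initially present customers can age into it during $[0,T]$; this requires the survival/hazard-rate estimate of the cited lemma rather than a plain LLN. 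With that step replaced by the citation, your proof goes through, but it is heavier than the paper's set-enlargement argument, which exploits directly the same transport structure that silently powers your grid step.
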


\begin{proof}
Recall the definition \eqref{1} of $\nu^N_t$.
Let $F\subset\R_+$ be any closed set and denote by $F^{\eps_1}$ its $\eps_1$-enlargement
in $\R_+$, for a given $\eps_1\in(0, \eta)$. Then
for $s\leq t\leq s+\eps_1/2$ we have from \eqref{1}
\begin{align}\label{38}
&\bar\nu^N_t(F)-\bar\nu^N_s(F^{\eps_1})\\
&\quad= \frac{1}{N}\sum_{j=-X^N_0+1}^{E^N_t}\ind_F(\om^N_j(t))\ind_{\{\om^N_j(t)<v_j,\ t\ge\gamma^N_i\}}
-\frac{1}{N}\sum_{j=-X^N_0+1}^{E^N_s}\ind_{F^{\eps_1}}(\om^N_j(s))\ind_{\{\om^N_j(s)<v_j, \ s\ge\gamma^N_i\}}\nonumber
\\
&\quad\leq \frac{1}{N}\sum_{j=-X^N_0+1}^{E^N_s}\Big(\ind_F(\om^N_j(t))\ind_{\{\om^N_j(t)<v_j, \ s\ge\gamma^N_i\}}-\ind_{F^{\eps_1}}(\om^N_j(s))
\ind_{\{\om^N_j(s)<v_j, \ s\ge\gamma^N_i\}}\Big)
+ \bar{K}^N_t-\bar{K}^N_s\nonumber
\\ \notag
&\quad\leq  \bar{K}^N_t-\bar{K}^N_s,
\end{align}
where the last inequality follows from the following fact: if a customer $j$ has entered service by time $s$ and receives service at time $t$ with $\om^N_j(t)\in F$,
then by definition $\om^N_j(s)\in F^{\eps_1}$ as $t-s\leq \eps_1/2$ and $\om^N_j$ grows linearly. Also
\begin{align}\label{39}
&\bar{\nu}_s(F)-\bar{\nu}_t(F^{\eps_1})\\
&\quad\leq \frac{1}{N}\sum_{j=-X^N_0+1}^{E^N_s}\ind_{F}(\om^N_j(s))\ind_{\{\om^N_j(s)<v_j, \ s\ge\gamma^N_i\}}
-\frac{1}{N}\sum_{j=-X^N_0+1}^{E^N_s}\ind_{F^{\eps_1}}(\om^N_j(t))\ind_{\{\om^N_j(t)<v_j, \ s\ge\gamma^N_i\}}\nonumber
\\
&\quad= \frac{1}{N}\sum_{j=-X^N_0+1}^{E^N_s}\Big(\ind_{F}(\om^N_j(s))\ind_{\{\om^N_j(s)<v_j, \ s\ge\gamma^N_i\}}-
\ind_{F^{\eps_1}}(\om^N_j(t))\ind_{\{\om^N_j(t)<v_j, \ s\ge\gamma^N_i\}}\Big)\nonumber
\\ \notag
&\quad\leq \bar{D}^N_t - \bar{D}^N_s,
\end{align}
where the last inequality follows from the fact that if a customer is in service at time $s$ but not at time $t$ then it must have completed its service in
the time interval $(s, t]$. Now using Lemma~\ref{lem5.2} we have $\theta\in(0, \iy), N_0\in\N$ such that
\begin{equation}\label{40}
\text{for all}\;\; N\geq N_0, \quad \p(\osc_\theta(\bar{Z}, T)>\eps_1)<\eps/2, \quad \text{for}\;\; \bar{Z}=\bar{K}^N, \bar{D}^N.
\end{equation}
Since $\osc_\theta$ is
increasing in $\theta$ we can chose $\theta\in(0, \eps_1/2)$.
Thus combining \eqref{38}, \eqref{39} and \eqref{40} we obtain
$$\text{for all}\;\; N\geq N_0, \quad \p\Big(\sup_{0\leq s\leq t\leq t+\del\leq T } d_\calM(\bar\nu^N_s, \bar\nu^N_t)>\eta\Big)<\eps, $$
for $\del=\theta$. This completes the proof.
\qed
\end{proof}

\begin{lemma}\label{lem5.4}
Suppose that Assumption~\ref{Assump1} holds. Then the sequence $\{\bar\nu^N\}$ is $C$-tight.
\end{lemma}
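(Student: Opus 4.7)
The plan is to establish $C$-tightness of $\{\bar\nu^N\}$ in $\D_{\calM([0,H^s))}$ by verifying the two standard conditions for tightness in this space together with the $C$-path property: (i) uniform compact containment of $\bar\nu^N_t$ in $\calM([0,H^s))$ on the time interval $[0,T]$, and (ii) a modulus-of-continuity estimate in the Prohorov metric $d_\calM$. Condition (ii) is essentially Lemma~\ref{lem5.3}, which already asserts that for every $\eps,\eta,T>0$ we can find $\del>0$ and $N_0$ with
\[
\p\Big(\sup_{0\le s\le t\le s+\del\le T} d_\calM(\bar\nu^N_s,\bar\nu^N_t)>\eta\Big)<\eps,\qquad N\ge N_0.
\]
Because this oscillation bound is uniform in $d_\calM$, any subsequential $J_1$-limit will have $d_\calM$-continuous paths, thereby giving $C$-tightness (once (i) is in hand). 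I would apply the criterion C1--C2 stated after Lemma~\ref{lem5.1} in its Polish-space form.

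Step 1 (compact containment). Since $\mean{1,\bar\nu^N_t}\le 1$ by \eqref{5}, the total mass is uniformly bounded, so compactness in $\calM([0,H^s))$ reduces to tightness of the family $\{\bar\nu^N_t:t\le T,N\in\N\}$ on $[0,H^s)$. Using the pre-limit analogue of Lemma~\ref{lem3.2} (available in \cite[Corollary~4.4]{kaspi-ramanan}) applied with $f=\ind_{[c,H^s)}$ for $c\in(T,H^s)$, one obtains the representation
\[
\bar\nu^N_t[c,H^s)\ \le\ \int_{[0,H^s)}\ind_{\{x+t\ge c\}}\frac{1-G(x+t)}{1-G(x)}\,\bar\nu^N_0(dx)\ +\ (1-G(c))\,\bar K^N_t.
\]
Since $\bar\nu^N_0\Ra\nu_0$ in $\calM_1([0,H^s))$, for any $\eps>0$ there is $a<H^s$ with $\limsup_N\p(\bar\nu^N_0[a,H^s)>\eps)<\eps$; this controls the first term. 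For the second term, $\{\bar K^N_T\}$ is tight by Lemma~\ref{lem5.2}, and $1-G(c)\downarrow 0$ as $c\uparrow H^s$. Thus for any $\eta,\eps>0$ we can find $c<H^s$ and $N_0$ such that
\[
\p\Big(\sup_{t\in[0,T]}\bar\nu^N_t[c,H^s)>\eta\Big)<\eps,\qquad N\ge N_0.
\]
Combined with the mass bound, this yields a compact subset $\calK\subset\calM([0,H^s))$ with $\p(\bar\nu^N_t\notin\calK\text{ for some }t\le T)<\eps$ for all large $N$, verifying C1.

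Step 2 (conclusion). With the compact containment from Step 1 and the $d_\calM$-modulus bound from Lemma~\ref{lem5.3}, the characterization of $C$-tightness stated after Lemma~\ref{lem5.1} applies verbatim (in its Polish-space version) to give that $\{\bar\nu^N\}$ is $C$-tight in $\D_{\calM([0,H^s))}$.

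The main obstacle I anticipate is Step 1, specifically ruling out mass accumulation near the right endpoint $H^s$ when $H^s=\infty$ or when $H^s$ is finite and $g$ vanishes in a neighborhood of $H^s$. The MSTE-based representation above is essential for this step, since it converts the question of mass at age $\ge c$ into a statement about the survival function $1-G(c)$, which by definition tends to $0$ as $c\uparrow H^s$. The remaining pieces (total mass control, tightness of $\bar K^N$, and tightness of initial data) are direct consequences of Lemma~\ref{lem5.2} and Assumption~\ref{Assump1}.
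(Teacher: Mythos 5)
Your overall architecture is the same as the paper's: oscillation control in the Prohorov metric (Lemma~\ref{lem5.3}) plus compact containment gives tightness, and the vanishing oscillations then upgrade this to $\C$-tightness (the paper does this last step via the jump functional and \cite[Theorem 3.10.2]{ethier-kurtz}; your Step~2 is an acceptable variant of the same idea, provided you use the Polish-space criterion with compact containment in place of C1). The difference is in Step~1, and there you have a genuine gap: the displayed inequality
\[
\bar\nu^N_t[c,H^s)\ \le\ \int_{[0,H^s)}\ind_{\{x+t\ge c\}}\frac{1-G(x+t)}{1-G(x)}\,\bar\nu^N_0(dx)\ +\ (1-G(c))\,\bar K^N_t
\]
is not a valid sample-path statement for the pre-limit system. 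Lemma~\ref{lem3.2} and \cite[Corollary 4.4]{kaspi-ramanan} are identities for solutions of the \emph{fluid} equation \eqref{fme5}; there is no pre-limit analogue in which the survival factors appear pathwise. For the $N$-th system, a customer initially in service with age $x$ is still in service at time $t$ according to the random indicator $\ind_{\{v_i>x+t\}}$, whose conditional expectation is $\frac{1-G(x+t)}{1-G(x)}$, and a customer admitted by time $t-c$ contributes to $\nu^N_t[c,H^s)$ according to $\ind_{\{v_i\ge c\}}$, whose expectation is roughly $1-G(c)$. On the event (of positive probability for each fixed $N$) that many admitted customers happen to draw service times exceeding $c$, the left side can exceed your right side; so the inequality holds only after taking (conditional) expectations.

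Consequently Step~1 needs an additional stochastic ingredient: either bound $\E\bigl[\sup_{t\le T}\bar\nu^N_t[c,H^s)\bigr]$ (using, e.g., the pathwise bound $\nu^N_t[c,H^s)\le \nu^N_0[(c-T)^+,H^s)+\#\{i:\gamma^N_i\le T,\ v_i\ge c\}$ together with independence of $\{v_i\}$ from the admission mechanism, and then Markov's inequality), or invoke a law-of-large-numbers/martingale estimate for the thinned admission count, uniformly over $t\in[0,T]$. This is precisely the content of \cite[Lemma 5.12]{kaspi-ramanan}, which the paper cites verbatim for the compact containment property instead of re-deriving it. With that repair (or with the citation), your argument goes through; the remaining ingredients you use --- total mass bounded by $1$ via \eqref{5}, tightness of $\bar K^N_T$ from Lemma~\ref{lem5.2}, convergence of $\bar\nu^N_0$, and the modulus bound of Lemma~\ref{lem5.3} --- match the paper's proof.
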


\begin{proof}
First we argue that tightness holds for $\{\bar{\nu}^N\}$, then $\C$-tightness.
As far as tightness is concerned,
since we have already established the oscillation bound stated in Lemma~\ref{lem5.3},
to show tightness we only need to show
the compact containment property (see \cite[Corollary 3.7.4]{ethier-kurtz}),
i.e., that for each $T, \eta>0$, there
exists a compact set $\mathbf{K}_{T,\eta}\subset\calM$ such that
\[
\liminf_{N\ra\iy}\p\left(\bar\nu_t^N\in\mathbf{K}_{T,\eta} \mbox{ for all }
t\in[0,T]\right)>1-\eta.
\]
The proof of this statement follows just as in
\cite[Lemma 5.12]{kaspi-ramanan}.

Next we show $\C$-tightness.
Define for $\zeta\in\D_{\calM}$,
$$ J(\zeta)=\int_0^\iy e^{-t}[J(\zeta, t)\wedge 1]dt , \quad \text{where}\, J(\zeta, t)=\sup_{s\leq t} d_\calM(\zeta_s, \zeta_{s-}).$$
By \cite[Theorem 3.10.2]{ethier-kurtz}, to show $\C$-tightness it suffices
to show that for any $\eps, \eta>0$
$$\liminf_{N\to\iy}\p(J(\bar{\nu}^N)\leq\eps)\geq 1-\eta.$$
However, this is obvious from Lemma~\ref{lem5.3}.
This completes the proof.
\qed
\end{proof}

\begin{lemma}\label{lem5.5}
Let Assumption~\ref{Assump1} hold. Then the collection of measure-valued
processes $\{\bar{\calQ}^N\}$ is $\C$-tight.
\end{lemma}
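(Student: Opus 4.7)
The plan is to exploit the representation from Theorem~\ref{Thm2}, namely
\[
\bar\calQ^N = \Th_1(\bar\al^N,\bar K^N+\bar R^N),
\]
and to combine it with the continuity of the measure-valued Skorohod map stated in Proposition~\ref{prop2.1}(2). Roughly, I want to show that the input pair $(\bar\al^N,\bar K^N+\bar R^N)$ is $\C$-tight with limits supported in the continuity set $\C^{\uparrow,0}_\calM\times\C^\up_{\R_+}$ of $\varTheta$, and then transport tightness and continuity of paths through $\Th_1$.

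First I would establish joint $\C$-tightness of $(\bar\al^N,\bar K^N+\bar R^N)$ in $\D_\calM\times\D_{\R_+}$. The marginal $\C$-tightness of $\bar\al^N$ (with limit $\al\in\C^{\uparrow,0}_\calM$) is given by Assumption~\ref{Assump1}, and $\C$-tightness of $\bar K^N$ and $\bar R^N$ separately is Lemma~\ref{lem5.2}; since $\C$-tightness of the marginals implies $\C$-tightness of the pair, joint $\C$-tightness follows. By Assumption~\ref{Assump1} the limit of $\bar\al^N$ is deterministic and lies in $\C^{\uparrow,0}_\calM$, while any subsequential limit of $\bar K^N+\bar R^N$ is nondecreasing and continuous, hence in $\C^\up_{\R_+}$. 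Thus along any weakly convergent subsequence,
\[
(\bar\al^N,\bar K^N+\bar R^N)\To(\al,Y)\in\C^{\uparrow,0}_\calM\times\C^\up_{\R_+}.
\]

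Next I would invoke the continuous mapping theorem using Proposition~\ref{prop2.1}(2), which gives continuity of $\Th_1$ on $\C^{\uparrow,0}_\calM\times\C^\up_{\R_+}$. Since the subsequential limit of the input lies in this continuity set almost surely, $\bar\calQ^N=\Th_1(\bar\al^N,\bar K^N+\bar R^N)$ converges in distribution in $\D_\calM$ along the same subsequence to $\Th_1(\al,Y)$. This gives tightness of $\{\bar\calQ^N\}$ in $\D_\calM$.

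Finally, to upgrade to $\C$-tightness, I need that the subsequential limits $\Th_1(\al,Y)$ have continuous paths in $\calM$. I would use Remark~\ref{R2.1}, which expresses $\Th_1$ coordinatewise via the one-dimensional Skorohod map: $\Th_1(\al,Y)_t[0,x]=\Gam_1[\al_\cdot[0,x]-Y_\cdot](t)$ for every $x\ge0$. Since $\Gam_1$ maps $\C_\R$ into $\C_\R$ and both $t\mapsto\al_t[0,x]$ and $t\mapsto Y_t$ are continuous, each map $t\mapsto\Th_1(\al,Y)_t[0,x]$ is continuous; the monotonicity of the distribution functions together with the atomlessness of $\al_t$ lets me promote this pointwise continuity to continuity in the Prohorov metric $d_\calM$. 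The main obstacle is precisely this last step—passing from coordinatewise continuity of the distribution function $x\mapsto\Th_1(\al,Y)_t[0,x]$ to continuity of the path in the measure space—which is the reason the continuity set in Proposition~\ref{prop2.1} is restricted to atomless inputs; the argument follows that of \cite{atar-bis-kaspi-ram}, so I would cite it rather than reproduce it.
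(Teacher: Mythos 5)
Your proposal is correct and follows essentially the same route as the paper: it uses the representation $\bar\calQ^N=\Th_1(\bar\al^N,\bar K^N+\bar R^N)$ from Theorem~\ref{Thm2}, the $\C$-tightness of the inputs from Assumption~\ref{Assump1} and Lemma~\ref{lem5.2}, and the continuity of $\Th$ on $\C^{\uparrow,0}_\calM\times\C^\up_{\R_+}$ from Proposition~\ref{prop2.1}. The paper simply states the conclusion as an immediate consequence of this continuity, whereas you additionally spell out (via Remark~\ref{R2.1} and the results of \cite{atar-bis-kaspi-ram}) why the limiting paths are continuous; this extra detail is consistent with, not different from, the paper's argument.
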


\begin{proof}
Note by \eqref{13} that $\bar\calQ^N$ is the image of $\bar\al^N$ and $\bar K^N+\bar R^N$
under $\Th$.
Since we have already proved $\C$-tightness of $\bar K^N$ and $\bar R^N$,
the result is an immediate consequence of the continuity of $\Th$
on $\C_{\calM}^{\up,0}\times\C_\R^\up$,
shown in \cite[Proposition 2.10]{atar-bis-kaspi-ram}.
\qed
\end{proof}

Next, introduce two measure-valued processes associated to $D^N$ and its compensator,
taking values in $\calM([0, H^s)\times R_+)$.
For $A$ a measurable subset of $[0, H^s)\times R_+$, let
\begin{align*}
\bar\calD^N_t(A) &=\bar{D}^N_t(\ind_A),
\\
\bar\calA^N_t(A) & =\bar{A}^N_t(\ind_A),
\end{align*}
where ${D}^N_t(\varphi)$ and ${A}^N_t(\varphi)$ are given by \eqref{9} and \eqref{30}, respectively.
For $\varphi\in\C_c([0, H^s)\times R_+)$, denote
$\bar\calD^N_t(\ph)=\bar\calD^N_t(\ph)$ and $\bar\calA^N_t(\ph)=\bar\calA^N_t(\ph)$.
Writing $\varphi=\varphi^+-\varphi^-$ and using Lemma~\ref{lem5.2} we have the sequences $\{\bar{D}^N(\varphi)\}, \{\bar{A}^N(\varphi)\}$ $\C$-tight, for every $\varphi\in\C_{b}([0, H^s)\times\R_+)$. Using \eqref{34} and the arguments of
\cite[Lemma~5.13]{kaspi-ramanan} one can show that the processes $\bar\calD^N, \bar\calA^N$ satisfy the compact containment condition
in the sense of Jakubowski. Thus we can establish Jakubowski's criteria for compactness of processes in $\D_{\calM}([0, H^s)\times \R_+)$
 for $\bar\calD^N, \bar\calA^N$
(see \cite[Lemma~5.13]{kaspi-ramanan}) and obtain the following result.

\begin{lemma}\label{lem5.6}
Let Assumption~\ref{Assump1} hold. Then the sequences $\{\bar\calD^N\}$ and $\{\bar\calA^N\}$
are tight in the space $\D_{\calM([0, H^s)\times\R_+)}$.
\end{lemma}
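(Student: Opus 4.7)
The plan is to verify the two hypotheses of Jakubowski's criterion for tightness of $\calM([0,H^s)\times\R_+)$-valued processes, exactly as sketched in the paragraph preceding the lemma. Jakubowski's theorem reduces tightness in $\D_{\calE}$, for $\calE$ a completely regular Polish space, to (i) a compact containment condition and (ii) tightness in $\D_\R$ of $\lan f, Z^N\ran$ for $f$ ranging over a separating family of continuous functions on $\calE$. Here we take as the separating family $\F\subset\C_b([0,H^s)\times\R_+)$ the class $\C_c([0,H^s)\times\R_+)$, which is known to be separating for $\calM([0,H^s)\times\R_+)$.

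For condition (ii), fix $\varphi\in\C_c([0,H^s)\times\R_+)$ and write $\varphi=\varphi^+-\varphi^-$. Since $\varphi^\pm\in\C_{b,+}([0,H^s)\times\R_+)$, Lemma~\ref{lem5.2} gives $\C$-tightness of both $\{\bar D^N(\varphi^\pm)\}$ and $\{\bar A^N(\varphi^\pm)\}$, whence tightness of $\{\lan\varphi,\bar\calD^N\ran\}=\{\bar D^N(\varphi)\}$ and $\{\lan\varphi,\bar\calA^N\ran\}=\{\bar A^N(\varphi)\}$ in $\D_\R$ follows by subtraction. (Note that the difference of two $\C$-tight sequences is tight, even if it is no longer monotone.)

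For condition (i), I would proceed as in \cite[Lemma~5.13]{kaspi-ramanan}. For each $T,\eta>0$, one must exhibit a compact set $\bK_{T,\eta}\subset\calM([0,H^s)\times\R_+)$ such that $\liminf_N\p(\bar\calD^N_t,\bar\calA^N_t\in\bK_{T,\eta}\text{ for all }t\in[0,T])\ge 1-\eta$. A subset of $\calM(\calS)$ is relatively compact iff it is tight and has uniformly bounded total mass. Both measures are supported on $[0,H^s)\times[0,T]$ for $t\le T$, so tightness on $\R_+\times\R_+$ reduces to mass control on sets of the form $[c,H^s)\times[0,T]$ as $c\uparrow H^s$, plus a bound on total mass. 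The total-mass bound is immediate from \eqref{34}, which gives $\sup_N\E\bar\calA^N_T([0,H^s)\times[0,T])=\sup_N\E\bar\calD^N_T([0,H^s)\times[0,T])<\iy$. For the tail control near $H^s$, choose $\ph_c\in\C_{b,+}([0,H^s)\times\R_+)$ with $\ph_c\ge \ind_{[c,H^s)\times\R_+}$ and $\ph_c\uparrow 0$ as $c\uparrow H^s$ pointwise (using a cutoff of $h$ or of $1/(1-G)$); then \eqref{32} together with the final assertion of Assumption~\ref{Assump1} yields $\limsup_N\E\bar A^N_T(\ph_c)\to 0$ as $c\uparrow H^s$, whence $\E\bar\calA^N_T([c,H^s)\times[0,T])$ and, via the martingale identity $\E\bar\calD^N=\E\bar\calA^N$, also $\E\bar\calD^N_T([c,H^s)\times[0,T])$, can be made uniformly small. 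A standard Markov-inequality argument then produces the required compact set.

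The main obstacle is the tail control near the right endpoint $H^s$ in the compact containment step: when $H^s=\iy$ or when $h$ blows up near $H^s$, one must ensure that the mass placed by $\bar\calA^N$ near the boundary is negligible uniformly in $N$. This is handled by the already-established uniform integrability of $\bar X_0^N+\bar E_T^N$ (Assumption~\ref{Assump1}) combined with \eqref{32} applied to appropriate truncations, exactly following the template of \cite[Lemma~5.13]{kaspi-ramanan}; once this is in place, Jakubowski's criterion delivers the claim.
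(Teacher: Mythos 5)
Your proposal follows essentially the same route as the paper: Jakubowski's criterion, with condition (ii) obtained by decomposing $\varphi=\varphi^+-\varphi^-$ and invoking Lemma~\ref{lem5.2}, and condition (i) (compact containment) obtained from \eqref{34} together with the argument of \cite[Lemma~5.13]{kaspi-ramanan}. The extra detail you supply on tail control near $H^s$ is consistent with that template (aside from the typo ``$\ph_c\uparrow 0$'', which should be ``$\ph_c\downarrow 0$'' pointwise as $c\uparrow H^s$).
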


\section{Characterization of limits}\label{sec-char}
\beginsec

Finally, we prove Theorem \ref{Thm3}.
This section is devoted in the characterization of the subsequential limits of the scaled processes.
Given the tightness result from section~\ref{sec-tight} and the uniqueness of solutions to
the FME, it suffices to prove that any subsequential limit of the scaled processes
solves the FME. In other words, Theorem \ref{Thm3} is an immediate consequence of the following.

\begin{proposition}\label{Thm6.6}
Let the hypotheses of Theorem \ref{Thm3} hold.
If $(\calQ, \nu, K, R)$ is any
subsequential limit of $(\bar\calQ^N, \bar\nu^N, \bar{K}^N, \bar{R}^N)$
then it solves the FME \eqref{1000}--\eqref{fme9} with data $(\calQ_0, \nu_0,\al, X_0)$.
\end{proposition}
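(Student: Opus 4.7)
The plan is to invoke the Skorokhod representation theorem, combined with the $\C$-tightness results of Section~\ref{sec-tight} and Lemma~\ref{lem5.6}, to pass to an almost surely convergent subsequence on a common probability space. Working path by path, I then pass to the limit in each of the pre-limit identities summarized in Theorem~\ref{Thm2} and \eqref{3}--\eqref{6}, \eqref{10}, \eqref{11}, to verify \eqref{fme1}--\eqref{fme9}. Since every sub-sequential limit has continuous sample paths, convergence in $J_1$ is uniform on compacts, and Stieltjes integrals $\int_{[0,t]} f\, d\bar K^N$ and $\int_{[0,t]} f\, d\bar R^N$ against bounded continuous $f$ converge to $\int_{[0,t]} f\, dK$, $\int_{[0,t]} f\, dR$.

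First I handle the easy pieces. The balance identities \eqref{3}, \eqref{AB1}, \eqref{5}, \eqref{7} together with $\bar Q^N = [\bar X^N - 1]^+$ pass to \eqref{fme1}--\eqref{fme4} by continuous mapping (using that $\bar E^N \To E$ defined by \eqref{1000} and $\bar D^N \To D$, where the latter limit will be identified as $\int_0^\cdot \lan h, \nu_s\ran ds$ via the martingale analysis below). Assumption~\ref{Assump1} gives $\al \in \C^{\uparrow,0}_{\calM}$, and together with $\C$-tightness of $\bar K^N + \bar R^N$ this allows continuity of $\Theta$ on $\C^{\uparrow,0}_{\calM} \times \C^\uparrow_{\R_+}$ (Proposition~\ref{prop2.1}) to transform \eqref{13} into \eqref{fme6}. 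In particular $\calQ \in \C_{\calM^0}$, which ensures that atomic issues do not arise.

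The core analytic step is the MSTE \eqref{fme5}. Scaling \eqref{12} by $N$ and using the decomposition $\bar D^N(\varphi) = \bar A^N(\varphi) + \bar\calM^N(\varphi)$ from Lemma~\ref{lem5.1}, together with $\bar\calM^N(\varphi) \To 0$, it remains to show that
\[
\bar A^N_t(\varphi) = \int_0^t \int_{[0,H^s)} h(x)\varphi(x,s)\, \bar\nu^N_s(dx)\, ds \;\To\; \int_0^t \lan h\varphi(\cdot,s), \nu_s\ran\, ds.
\]
Since $\varphi \in \C^1_c([0,H^s)\times\R_+)$ has support in $[0,c]\times [0,T]$ with $c < H^s$, one may split the argument on $[0,L^s \wedge c]$, where $h$ is locally integrable, and on $(L^s, c]$, where Assumption~\ref{Assump2} gives boundedness or lower semicontinuity. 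Combined with $\bar\nu^N \To \nu$ and the tightness of $\{\bar\calA^N\}$ in $\D_{\calM([0,H^s)\times\R_+)}$ from Lemma~\ref{lem5.6}, the desired convergence follows as in \cite[Section~5.4]{kaspi-ramanan} or \cite[Section~7]{kang-ramanan}; the remaining linear terms in $\bar\nu^N$ and the boundary term $\int_{[0,t]}\varphi(0,s)\,d\bar K^N_s$ pass to the limit by uniform convergence on compacts and continuity of $K$.

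The main obstacle is the three non-smooth conditions \eqref{fme7}--\eqref{fme9}. Relation \eqref{fme7} follows from \eqref{10} because $\calQ \in \C_{\calM^0}$ makes $\{t\}$ a $\calQ_t$-null set, so $\bar\calQ^N_t[0,t] \to \calQ_t[0,t]$. For \eqref{fme8}, I approximate $[N - X^N_t]^+/N \geq \ind_{\{\bar X^N_t \leq 1 - \eps\}}$ and use \eqref{6} together with weak-$*$ convergence of $d\bar R^N_t$ to $dR_t$ against continuous functions of $\bar X^N$ (which itself converges uniformly on compacts); letting $\eps \to 0$ yields the result by monotone convergence. The most delicate is \eqref{fme9}: I combine \eqref{fme7} (giving $\sigma_t \geq t$) with the pre-limit identity \eqref{11}, where the indicator $\ind_{\{\sigma^N_{t-} > t\}}$ is controlled by introducing continuous test functions $f_{\eps} \in \C_b(\R_+)$ approximating $\ind_{(t,\iy)}$ and exploiting the fact that $\sigma_t > t$ on an open set of times where $\calQ_t$ places mass strictly above $t$; atomlessness of $\calQ_t$ ensures the approximations are tight, and the passage to the limit in $\int \ind_{\{\sigma^N_{s-} > s\}} dR^N_s = 0$ produces $\int_0^\cdot \ind_{\{\sigma_s > s\}} dR_s = 0$, equivalent to $\sigma_t = t$ $dR$-a.e. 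Uniqueness from Theorem~\ref{Thm1} then promotes the convergence along a subsequence to full convergence, completing Theorem~\ref{Thm3}.
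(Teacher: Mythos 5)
Most of your outline coincides with the paper's route: Skorohod representation plus the $\C$-tightness of Section~\ref{sec-tight}, the martingale decomposition of Lemma~\ref{lem5.1} together with the identification of the limit compensator (Lemma~\ref{lem6.1}, i.e.\ \cite[Proposition~5.17]{kaspi-ramanan}) to get \eqref{fme5}, passage to the limit in the balance equations for \eqref{fme1}--\eqref{fme4}, continuity of $\Th$ on $\C^{\uparrow,0}_{\calM}\times\C^\uparrow_{\R_+}$ for \eqref{fme6}, and direct limits of \eqref{10}, \eqref{6} for \eqref{fme7}--\eqref{fme8}. These parts are fine.

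The genuine gap is your treatment of \eqref{fme9}. You propose to pass to the limit in the pre-limit identity \eqref{11} by approximating the indicator $\ind_{\{\sigma^N_{s-}>s\}}$ with continuous test functions, but this cannot work as stated: $\sigma^N_{s-}$, the infimum of the support of $\bar\calQ^N_{s-}$, is not a continuous (nor even convergent) functional under Prohorov convergence $\bar\calQ^N\to\calQ$. A vanishing fraction of customers (e.g.\ $o(N)$ of them) with deadlines just above $s$ forces $\sigma^N_{s-}$ close to $s$, so the constraint \eqref{11} is vacuous at such times even though the limit satisfies $\sigma_s>s$; hence one cannot conclude $\int\ind_{\{\sigma_s>s\}}dR_s=0$ from \eqref{11} and weak convergence alone, and indeed your argument nowhere uses Assumption~\ref{Assump3} or \ref{Assump4}, although the statement of Lemma~\ref{lem6.2} requires one of them. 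The paper's proof instead (i) reduces, via an optionality/Section-Theorem argument (since the subsequential limit is not yet known to be deterministic), to showing that on the event $\{\sigma_\tau>\tau+\hat\del\}$ one has $R_{\tau+\eps}=R_\tau$ for some deterministic $\eps>0$; (ii) proves the \emph{persistence} statement $\calQ_s[0,\tau+\eps]=0$ for $s\in[\tau,\tau+\eps]$, which is exactly where Assumption~\ref{Assump3} (through the hazard-rate lower bound, as in Lemma~\ref{lem5}) or Assumption~\ref{Assump4} (no arrivals with patience below $\kappa_1$) is needed; and (iii) controls the increment of $\bar R^N$ by the counting bound $\bar R^N_b-\bar R^N_a\le\bar\calQ^N_a(a,b]+\bar\al^N_b(a,b]-\bar\al^N_a(a,b]$, partitioning $(\tau,\tau+\eps]$ into $J$ pieces and letting $J\to\iy$ so that the arrival contribution is bounded by $T\sup_s\la_s\,\pi[0,\eps_J]\to0$. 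Without an argument of this type (or some substitute supplying the persistence step and the counting bound), your proof of \eqref{fme9} does not go through.
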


We define
$$\calY=\R_+\times(\D_{\R_+})^3\times(\D_{\calM})^2\times\calM_1([0, H^s))\times\D_{\calM_1([0, H^s))}\times(\D_{\calM([0, H^s)\times\R_+)})^2.$$
We equip $\calY$ with the product topology. Let
$$\bar{Y}^N=(\bar{X}^N_0, \bar{X}^N, \bar{K}^N, \bar{R}^N, \bar\al^N, \bar{\calQ}^N, \bar{\nu}_0, \bar\nu^N, \bar\calA^N, \bar\calD^N)\in\calY.$$
Applying Lemma~\ref{lem5.1} --\ref{lem5.6} we see that $\bar{Y}^N$ is a tight sequences and thus it has a convergent subsequence. Let $Y$ be one
of the subsequential limit of $\bar{Y}^N$. In fact, $Y$ would be of following form because of Lemma~\ref{lem5.1},
$$Y=(X_0, X, K, R, \al, \calQ, \nu_0, \nu, \calA, \calA), \quad \text{where}\; \; \calA\in \D_{\calM([0, H^s)\times\R_+)}.$$
Also because of our $\C$-tightness we have $X, K, R, \calQ, \nu$ continuous. Moving to the subsequence and applying Skorohod
representation theorem we can assume that $\bar{Y}^N\to Y$ a.s.\ as $N\to\iy$ on some probability space, say $(\Omega, \mathcal{F}, \p)$.

The following result follows from \cite[Proposition~5.17]{kaspi-ramanan}

\begin{lemma}\label{lem6.1}
Suppose Assumption~\ref{Assump1} and \ref{Assump2} holds. Then for every $\varphi\in\C_b([0, H^s)\times\R_+)$,
$$\calA_t(\varphi)= \int_0^t \mean{\varphi(\cdot, s) h(\cdot), \nu_s} \ ds, \quad t\in[0, \iy).$$
\end{lemma}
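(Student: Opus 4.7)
The statement is that the limiting compensator $\calA$ has the integral representation $\calA_t(\varphi)=\int_0^t\langle \varphi(\cdot,s)h,\nu_s\rangle\,ds$, and the text indicates that this follows from \cite[Proposition 5.17]{kaspi-ramanan}. The plan is to verify that the hypotheses of that result hold in our setting and to check that the deadline structure added here does not interfere with the argument. By the definition \eqref{30} we have
$$\bar\calA^N_t(\varphi) = \int_0^t \int_{[0,H^s)} \varphi(x,s) h(x)\, \bar\nu^N_s(dx)\, ds,$$
and by the Skorohod representation together with Lemmas~\ref{lem5.4} and \ref{lem5.6} we may assume $\bar\nu^N\to\nu$ in $\D_{\calM_1([0,H^s))}$ and $\bar\calA^N\to\calA$ in $\D_{\calM([0,H^s)\times\R_+)}$ almost surely. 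It thus suffices to pass to the limit in the displayed integral.

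The core difficulty is that $h$ is generally unbounded near $H^s$ and need not be continuous, so weak convergence of $\bar\nu^N$ does not immediately yield convergence of $\langle \varphi h,\bar\nu^N_s\rangle$. I would resolve this by a truncation argument based on Assumption~\ref{Assump2}. Pick $L^s<H^s$ as in that assumption, and for each $k$ with $L^s+1/k<H^s$ choose a continuous cutoff $\chi_k$ supported in $[0,L^s+1/k]$ and equal to $1$ on $[0,L^s]$. For fixed $k$, the product $\varphi\chi_k h$ is bounded on $[0,H^s)$; if $h$ is bounded on $(L^s,H^s)$ this is immediate, and if $h$ is only lower-semicontinuous it can be approximated from below by bounded continuous functions, after which weak convergence on the spatial variable combined with dominated convergence in $s$ gives convergence of $\int_0^t\langle \varphi\chi_k h,\bar\nu^N_s\rangle\,ds$ to $\int_0^t\langle \varphi\chi_k h,\nu_s\rangle\,ds$.

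The main obstacle is then to control the tail contribution from $(L^s+1/k,H^s)$ uniformly in $N$ and to show that it vanishes as $k\to\iy$. This is where one uses the martingale identity of Lemma~\ref{lem5.1}: since $\bar\calM^N(\varphi)$ is a local martingale with vanishing quadratic variation, the optional stopping argument used to derive \eqref{34} yields
$$\E[\bar\calA^N_T(\varphi(1-\chi_k))] = \E[\bar\calD^N_T(\varphi(1-\chi_k))] \le \|\varphi\|_\infty\,\E\bigl[\text{(mass of }\bar\nu^N\text{ that exits via }(L^s,H^s))\bigr].$$
Using that the expected mass of $\bar\nu^N$ is bounded (by $\E[\bar X^N_0+\bar E^N_T]$, which is uniformly bounded by Assumption~\ref{Assump1}) and that $\nu_0\in\calM_1([0,H^s))$ places no mass at $H^s$, the tail contribution can be made arbitrarily small uniformly in $N$ by taking $k$ large. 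Letting $N\to\iy$ for fixed $k$ and then $k\to\iy$ gives the identity.

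Finally I would observe that the filtration $\{\calF^N_t\}$ used here is richer than the one in \cite{kaspi-ramanan} only through the deadline information, and since the patience times $\{r_i^N\}$ are independent of the service requirements $\{v_i\}$, the conditional independence structure needed for the local martingale property and the compensator identity is preserved. Consequently the Kaspi--Ramanan argument goes through verbatim in our setting, and the conclusion of Lemma~\ref{lem6.1} follows.
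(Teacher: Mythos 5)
Your overall framing---that the statement is exactly \cite[Proposition~5.17]{kaspi-ramanan}, and that the only point needing verification in the present setting is that the enlarged filtration (deadline information) does not destroy the conditional-independence structure---is precisely how the paper proceeds; the filtration issue is already handled at the level of Lemma~\ref{lem5.1}, and the paper's proof of Lemma~\ref{lem6.1} is nothing more than this citation. The difficulty is with the direct argument you substitute for the citation, which has two genuine gaps. First, your truncation points the wrong way: you take $\chi_k$ equal to $1$ on $[0,L^s]$ and supported in $[0,L^s+1/k]$, so as $k\to\iy$ the neglected region $(L^s+1/k,H^s)$ \emph{grows} to $(L^s,H^s)$, and the quantity $\E[\bar\calA^N_T(\varphi(1-\chi_k))]=\E[\bar\calD^N_T(\varphi(1-\chi_k))]$ has no reason to become small: departures occurring at ages in $(L^s,H^s)$ are in general a non-negligible (often dominant) fraction of all departures. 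What you presumably intended is a family of cutoffs increasing to $\ind_{[0,H^s)}$, with the tail near $H^s$ controlled through $1-G$ and through $\nu_0\in\calM_1([0,H^s))$.

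Second, and more seriously, the fixed-$k$ limit step is unjustified. Assumption~\ref{Assump2} constrains $h$ only on $(L^s,H^s)$; on $[0,L^s]$ the hazard rate is merely locally integrable, so $\varphi\chi_k h$ need be neither bounded nor continuous, and weak convergence of $\bar\nu^N_s$ gives nothing for it directly. Moreover, even on a region where $h$ is lower semicontinuous, approximating it from below by bounded continuous functions combined with weak convergence yields only the one-sided bound $\liminf_N\int_0^t\mean{\varphi\chi_k h,\bar\nu^N_s}\,ds\ge\int_0^t\mean{\varphi\chi_k h,\nu_s}\,ds$ (say for $\varphi\ge0$), not convergence; lower semicontinuity can never produce the reverse inequality by itself. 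Obtaining the matching upper bound is exactly the delicate content of \cite[Proposition~5.17]{kaspi-ramanan}, where it comes not from integrating $h$ against $\bar\nu^N_s$ but from structural identities for the limit (the representation of $\nu$ through $G$ rather than $h$, as in Lemma~\ref{lem3.2}, together with mass-balance considerations); your proposal never supplies this step. As a self-contained proof the argument therefore does not go through, although the citation route with which you open and close is the paper's own proof.
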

In view of Lemma~\ref{lem6.1} following relations hold a.s.:
for any $\varphi\in \C^{1}_c([0, H^s)\times [0, \infty))$,
\begin{align}\label{cfme5}
\lan \varphi, \nu_t\ran & = \lan \varphi(\cdot, 0), \nu_0\ran + \int_0^t \lan \varphi_x(\cdot, s)+\varphi_s(\cdot, s), \nu_s\ran ds
- \int_0^t\lan h\varphi(\cdot, s), \nu_s\ran ds + \int_0^t \varphi(0, s)dK_s,
\end{align}
where with $Q_t=\calQ_t(\R_+)$, we have
\begin{align}
Q_0 + E_t &= Q_t + K_t +R_t,\label{cfme1}
\\
X_t &= Q_t + \lan 1, \nu_t\ran, \label{cfme2}
\\
Q_t &=[X_t-1]^+, \label{cfme3}
\\
K_t &= \lan1, \nu_t\ran-\lan 1, \nu_0\ran + D_t = \lan1, \nu_t\ran-\lan 1, \nu_0\ran + \int_0^t \lan h, \nu_s\ran ds, \label{cfme4}
\end{align}
where \eqref{cfme5} follows from \eqref{12}, \eqref{cfme1}-\eqref{cfme4} follows from \eqref{3}-\eqref{7}. From \eqref{10}  and \eqref{6} we get
\begin{align}
\calQ_t[0, t) & =0,\label{cfme7}
\\
\int_0^\cdot [1-X_s]^+dR_s & =0.\label{cfme8}
\end{align}
Since $\calQ^N_t[a, b]\leq \al^N_t[a, b]$ we obtain from Assumption~\ref{Assump1} that $\calQ_t$ does not have any atoms for every $t\in[0, \iy)$
a.s. Therefore from Proposition~2.1 and Theorem~\ref{Thm2} we get that, a.s., for all $x\in [0, \iy)$,
\begin{equation}\label{cfme6}
\calQ_t[0, x]=\Gam_1[\al[0, x]-K-R](t), \quad t\geq 0.
\end{equation}
Thus to complete the proof of Proposition~\ref{Thm6.6}, it remains to show \eqref{fme9}.
Define
\[
\sigma_t=\inf\,\text{support}(\calQ_t)=\inf\{x\in\R_+\; :\; \calQ_t[0, x]>0\}.
\]
From \eqref{10} we have $\calQ_t[0, t)=0$ and therefore $\sigma_t\geq t$ for
all $t\in[0, \iy)$. We have the following result which is in analogy with \eqref{11}.

\begin{lemma}\label{lem6.2}
Let either Assumption~\ref{Assump3} or Assumption~\ref{Assump4} hold. Then we have a.s.,
$$
\int_0^\cdot \ind_{\{\sigma_s>s\}} dR_s=0.
$$
\end{lemma}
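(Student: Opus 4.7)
The plan is to derive a fluid-scale upper bound on the increments of $R$ by counting customers that could possibly renege in a small window, then use it to show that the upper right Dini derivative of $R$ vanishes on appropriate level sets, and finally invoke a standard covering argument.

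\textbf{Step 1 (renege counting, pre-limit and limit).} Every customer reneging in $(a,b]$ has absolute deadline in $(a,b]$ and is either present in the queue at time $a$ or arrives during $(a,b]$; this yields the pathwise bound
\[
R^N_b - R^N_a \;\le\; \calQ^N_a(a,b] \;+\; \bigl(\calE^N_b(a,b] - \calE^N_a(a,b]\bigr).
\]
Scaling by $N$ and passing to the limit along the a.s.\ convergent subsequence afforded by the Skorokhod representation (using that $\bar R^N\to R$, $\bar\calQ^N\to\calQ$, $\bar\calE^N\to\calE$ uniformly on $[0,T]$ because the limits are continuous, and that $\calQ_a$ and $\calE_t$ are atomless, so Prohorov convergence implies pointwise convergence of the CDFs at any boundary point) I obtain
\[
R_b - R_a \;\le\; \calQ_a[a,b] \;+\; \int_a^b \la_s\, \pi[0,b-s]\, ds, \qquad 0 \le a \le b \le T.
\]

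\textbf{Step 2 (Dini estimate).} Fix $\eps>0$ and set $A_\eps := \{s\in[0,T]:\calQ_s[s,s+\eps]=0\}$. By \eqref{cfme7} and atomlessness of $\calQ_s$, $A_\eps=\{s:\sigma_s\ge s+\eps\}$; and by joint continuity of $(s,x)\mapsto \calQ_s[0,x]$ (which follows from continuity of $s\mapsto\calQ_s$ together with atomlessness of each $\calQ_s$), $A_\eps$ is closed. For $a\in A_\eps$ and $\delta\in(0,\eps]$, Step~1 combined with $\calQ_a[a,a+\delta]=0$ gives
\[
R_{a+\delta}-R_a \;\le\; \|\la\|_T\, \delta\, \pi[0,\delta].
\]
Since $\pi\{0\}=0$ (by Assumption~\ref{Assump1}; under Assumption~\ref{Assump4} the stronger $\pi[0,\kappa_1]=0$ holds), $\pi[0,\delta]\to0$ as $\delta\downarrow0$, so the upper right Dini derivative $D^+R(a)=0$ at every $a\in A_\eps$.

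\textbf{Step 3 (covering and conclusion).} A standard real-analysis fact (established by a Besicovitch or Vitali covering, exploiting that $R$ is continuous and non-decreasing) asserts that any continuous non-decreasing function whose upper right Dini derivative vanishes on a Borel set $A$ assigns zero Stieltjes mass to $A$; applied with $A=A_\eps$ this yields $\int_0^T \ind_{A_\eps}(s)\, dR_s=0$. Since $\{s:\sigma_s>s\}=\bigcup_{n\ge1}A_{1/n}$ and the sets $A_{1/n}$ are increasing in $n$, monotone convergence gives $\int_0^T \ind_{\{\sigma_s>s\}}(s)\, dR_s=0$, the asserted identity.

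The main obstacle is Step~1: the pre-limit renege bound is transparent, but its passage to the limit requires combining uniform-in-$t$ Prohorov convergence of $\bar\calQ^N_t$ and $\bar\calE^N_t$ with atomlessness of the limits so that the CDF values at the arbitrary boundary points $a$ and $b$ do converge; once this is in place, Steps~2 and~3 are routine.
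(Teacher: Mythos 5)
Your argument is correct, but it takes a genuinely different route from the paper's. The paper first uses optionality and the Section Theorem to reduce the claim to showing, at a single random time $\tau$ with $\sigma_\tau>\tau+\hat\del$, that $R_{\tau+\eps}=R_\tau$ for some nonrandom $\eps>0$; it then partitions $(\tau,\tau+\eps]$ into $J$ pieces and bounds each increment of $\bar R^N$ by the queue content at the partition points plus arrival terms (its inequality \eqref{61}--\eqref{62}). Controlling the queue terms at the later partition points forces the paper to propagate emptiness forward, i.e.\ to prove $\calQ_s[0,\tau+\eps]=0$ for all $s\in[\tau,\tau+\eps]$, and this is exactly where Assumption~\ref{Assump3} (via the hazard-rate argument of Lemma~\ref{lem5}) or Assumption~\ref{Assump4} (via $\pi[0,\kappa_1]=0$) enters. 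You avoid both the Section Theorem and the propagation step: you pass the same one-interval counting bound to the limit simultaneously for all pairs $a\le b$ (legitimate on the a.s.\ event of uniform convergence, since the limits are continuous and atomless), deduce that the upper right Dini derivative of $R$ vanishes on $\{s:\sigma_s\ge s+\eps\}$ using only $\pi\{0\}=0$ and local boundedness of $\la$ from Assumption~\ref{Assump1}, and then convert ``zero right derivative on $A$'' into ``zero $dR$-mass of $A$.'' What your approach buys is a purely pathwise, more elementary argument that in fact does not use Assumption~\ref{Assump3} or \ref{Assump4} at all, so it would strengthen the lemma (this creates no inconsistency elsewhere: those assumptions are still needed for uniqueness of the FME); what it costs is the reliance on the covering fact in Step~3, which you should state and justify precisely rather than call standard. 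The fact is true for a continuous nondecreasing $R$: the classical growth estimate (a Vitali covering in the image, noting that disjointness of the image intervals forces disjointness of the domain intervals for nondecreasing $R$) gives $m^*(R(A))\le p\,m^*(A)$ whenever a right derivate is at most $p$ on $A$, and the Lebesgue--Stieltjes mass satisfies $\int_A dR\le m^*(R(A))$ because $dR$ is the push-forward of Lebesgue measure under the generalized inverse of $R$; but note that the statement you need concerns the Stieltjes measure of $A$, not merely almost-everywhere flatness of $R$ on $A$, so this two-step justification (or a precise reference to it) should be included.
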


\begin{proof}
Fix $T>0$. We show that
$$\int_0^T \ind_{\{\sigma_s>s\}} dR_s=0.$$
Since $\{\sigma_s>s\}=\cup_{n\in\N}\{\sigma_s> s+\frac{1}{n}\}$, it is enough
to show that for any positive $\hat\del$ we have a.s.,
\begin{equation}\label{60}
\int_0^T \ind_{\{\sigma_s> s+\hat\del\}} dR_s=0.
\end{equation}
Notice that at this stage of the proof it has not yet been established
that the subsequential limit forms a solution to the FME, and therefore we cannot treat
it as deterministic.

The measurability of the set
$$A_0=\lt\{\int_0^T \ind_{\{\sigma_s> s+\hat\del\}} dR_s=0\rt\}$$
can be shown following the arguments in \cite[Lemma~5.9]{atar-bis-kaspi-ram}.
That is, from \eqref{cfme6} one can easily show that $t\mapsto \calQ_t[0, t+a]$
is right continuous for every $a\geq 0$ (see also \cite[Lemma~4.4]{atar-bis-kaspi-ram}).
Therefore the map $(\omega, t)\mapsto \calQ_t(\omega)[0, t+a]$ is optional and the set
$$
\{(t, \omega)\in [0, T)\times\Om \; :\;  \sigma_t(\omega)> t+\hat\delta\}
= \cup_{n=1}^\infty\{(t, \omega)\; :\; \calQ_t[0, t+\hat\delta+\frac{1}{n}]=0\},
$$
is also optional. This implies the measurability of
$$
\Gam=\{(t, \omega)\in[0, T)\times\Om\; :\; \sigma_t(\omega)> t+\hat\delta,
R_{t+\frac{1}{n}}>R_t\; \text{for all}\; n\},
$$
with respect to $\B([0, T))\times\calF_\infty$ where the filtration $\{\calF_t\}$
is obtained by augmenting the usual way the filtration
$\sigma\{\calQ_s, \nu_s, K_s, R_s\; :\; s\leq t\}$. Therefore
applying the Section Theorem for the measurable set $\Gam$, we can find
a $[0, T]\cup\{\infty\}$-valued random variable
$\tau$ such that $\p(A^c_0)=\p(A_1\cap A_2)$ for
$$
A_1=\{\tau<T\; :\; \sigma_\tau>\tau+\hat\delta\},
\quad \text{and} \quad A_2=\{R_{\tau+\eps}>R_\tau\; \text{for all }\, \eps>0\}.
$$
See \cite[Lemma~5.9]{atar-bis-kaspi-ram} for more details.

Therefore it is enough to show that on $A_1$,
that is, if $\sigma_\tau(\om)> \tau(\om)+\hat\del$
then there exists
$\eps=\eps(\omega)>0$ so that
$R_{\tau+\eps}(\omega)=R_\tau(\omega)$.
This will prove $\p(A^c_0)=0$ and therefore
\eqref{60} holds a.s. We know that for any $a<b\in[0, T]$ we have
\begin{equation}\label{61}
\bar{R}^N_b-\bar{R}^N_a\leq \bar\calQ^N_a(a, b]+\bar\al^N_b(a, b] - \bar\al^N_a(a, b].
\end{equation}
The above follows from the fact that the total number of customers reneged in the interval $(a, b]$ must be smaller than the number of customers in
the queue at time $a$ with absolute deadlines in $(a, b]$ together with the total number of customers arrived in time interval $(a, b]$ with absolute deadline in
$(a, b]$. Let
$\eps_J=J^{-1}\eps$ for some $J\in\N$ and let $I_k, k=1,\ldots, J$ denote the partition
$$I_k=(t_{k-1}, t_k], \quad t_k=\tau+k\eps_J,$$
of $(\tau, \tau+\eps]$. From  \eqref{61}
\begin{align}
\bar{R}^N_{\tau+\eps} - \bar{R}^N_{\tau} &= \sum_{k=1}^J(\bar{R}^N_{t_k}-\bar{R}^N_{t_{k-1}})\nonumber
\\
&\le\sum_{k=1}^J\Big(\bar\al^N_{t_k}(I_k)-\bar\al^N_{t_{k-1}}(I_k)+\bar\calQ^N_{t_{k-1}}(I_k)\Big).\label{62}
\end{align}
Since
$$\sup_{s\in[0, T]}d_\calM(\bar\calQ^N_s,\calQ_s)\to 0, \quad \text{as}\quad N\to\iy,\; \text{a.s.},$$
and $\calQ$ does not have atoms we get
\begin{equation}\label{63}
\sup_{s\in[0, T]}\sup_{x\in\R_+}|\bar\calQ^N_s(x, \iy)-\calQ_s(x, \iy)|\to 0, \quad \text{as}\; N\to\iy, \; \text{a.s.}
\end{equation}
On $A_1$ we have $\calQ_\tau[0, \tau+\hat\del]=0$. Now suppose that Assumption~\ref{Assump3} holds. Since $(\nu, K)$
satisfies \eqref{cfme5} for every sample path, it also satisfies the conclusion of Lemma~\ref{lem3.2}.
Thus we can find $\del$ and a compact set $\bC\subset[0, H^s)\setminus\calZ$ so that
\begin{align*}
\sup_{s\in[0, T]}\nu_s(\bC^c) &<\frac{1}{4},\quad \sup_{s\in[0, T]}\la_s\pi[0, \del]<\frac{3}{4}\inf_{x\in\bC}h(x),
\\
G(\del)<\frac{1}{10}\, .
\end{align*}
 Take any $\eps\in(0, \hat\del\wedge\del)$.
Then one can follow the proof of Lemma~\ref{lem5}
to conclude that $\calQ_s[0, \tau+\eps]=0$ for all $s\in[\tau, \tau+\eps]$ almost surely on $A_1$. Now we deduce similar conclusion under Assumption~\ref{Assump4}.
Let Assumption \ref{Assump4} hold. Since $\sigma_\tau>\tau+\hat\del$ on $A_1$ we also have
$\calQ_\tau[0, \tau+\hat\del]=0$.
 Denote $\hat\al_s=\al_s-\al_\tau$. Similarly, define
$\hat{K}_s=K_s-K_\tau, \; \hat{R}_s=R_s-R_\tau$. Then from \eqref{cfme6} we obtain for $s\in[\tau, T]$ that for any $x\leq \tau+\hat\del$
\begin{align*}
\calQ_s[0, x] &=\calQ_\tau[0, x] +\hat\al_s[0, x]-\hat{K}_s-\hat{R}_s +\sup_{u\leq s}(\hat{K}_s+\hat{R}_s-
\hat\al_s[0, x]
-\calQ_\tau[0, x])^+
\\
&=\hat\al_s[0, x]-\hat{K}_s-\hat{R}_s +\sup_{u\leq s}(\hat{K}_s+\hat{R}_s-\hat\al_s[0, x])^+.
\end{align*}
Since $\pi[0, \kappa_1]=0$, if we choose $\eps=\frac{\kappa_1}{2}\wedge\hat\del$ we have for $s\leq \tau+\hat\del$ that
$$\hat{\al}_s[0, \tau+\eps]=\int_\tau^s\la_u\ind_{\{\tau+\eps\geq u\}}\pi[0, \tau+\eps-u] du =0.$$
Thus combining the above two displays we have $\calQ_s[0, \tau+\eps]=0$ for $s\in[\tau, \tau+\eps]$. Thus using \eqref{63}
we get
\begin{equation}\label{64}
\sup_{s\in[\tau, \tau+\eps]}\bar\calQ^N_s[0, \tau+\eps]\to 0, \quad \text{as}\;\; N\to\iy, \; \text{a.s. on}\, A_1.
\end{equation}
It should be noted that our choice of $\varepsilon$ is non-random on $A_1$. Now we consider the summation
\begin{align*}
S^N_J= \sum_{k=1}^J\Big(\bar\al^N_{t_k}(I_k)-\bar\al^N_{t_{k-1}}(I_k)\Big).
\end{align*}
For fixed $J$ we see that as $N\to\iy$ we get $S^N_J\to S_J$ a.s., where
\begin{align*}
S_J &= \sum_{k=1}^J\Big(\al_{t_k}(I_k)-\al_{t_{k-1}}(I_k)\Big),
\\
&=\sum_{k=1}^J \int_{t_{k-1}}^{t_k}\ind_{\{t_k\geq u\}}\la_s\pi[0, t_k-u] du
\\
&\leq T\sup_{s\in[0, T]}\la_s\, \pi[0, \eps_J],
\end{align*}
where the first equality is due to the fact that $\al$ does not have atoms. Therefore letting $N\to\iy$ in \eqref{62}
and using \eqref{64}, we have on $A_1$
$$R_{\tau+\eps}-R_{\tau}\leq T\sup_{s\in[0, T]}\la_s\, \pi[0, \eps_J],$$
where $J$ is arbitrary. Since $\lim_{x\to 0}\pi[0, x]=0$ we get from above that $R_{\tau+\eps}-R_{\tau}=0$
almost surely on $A_1$. This completes the proof.
\qed

\end{proof}

\skp

\noi{\bf Acknowledgment.}
The authors are indebted to the anonymous referees for their careful review and suggestions.
AB acknowledges the hospitality of the Department of Electrical Engineering in Technion
while he was visiting at the early stages of this work.
The research of RA was supported in part by grant 1184/16 of the Israel Science Foundation.
The research of AB was supported in part by an INSPIRE faculty fellowship.
The research of HK was supported in part by grant 764/13 of the Israel Science Foundation.

\footnotesize

\bibliographystyle{is-abbrv}


\begin{thebibliography}{99}

\bibitem{asmussen}
S. Asmussen, {\it Applied {P}robability and {Q}ueues}. Springer (2008)

\bibitem{atar-bis-kaspi-ram} R. Atar, A. Biswas, H. Kaspi, K. Ramanan,
A Skorohod map on measure-valued paths with applications to priority queues.
Preprint.

\bibitem{atar-bis-kaspi} R. Atar, A. Biswas, H. Kaspi, Fluid limits of G/G/1+G queues
under the non-preemptive earliest-deadline-first discipline.
{\it Math. Oper. Res.} 40, No. 3, 683--702 (2015)

\bibitem{atar-kas-shim} R. Atar, H. Kaspi and N. Shimkin,
Fluid limits for many-server systems with reneging under a priority
policy. {\it Math. Oper. Res.}, Vol. 39, No. 3, 672--696 (2014).

\bibitem{burdzy-kang-ramanan}
K. Burdzy, W. Kang and K. Ramanan,
The Skorokhod problem in a time-dependent interval,
{\it Stoch. Proc. Appl.} 119 (2009), no. 2, 428--452.


\bibitem{brown} L. Brown, G. Gans, A. Mandelbaum, A. Sakov, H. Shen, S. Zetlyn and L. Zhao.
Statistical analysis of a telephone call cemter: a queueing-science
perspective. {\it J. Amer. Statist. Assoc.} 100, (2005) No. 469, 36--50.

\bibitem{Dec-Moyal} L. Decreusefond and P. Moyal,
Fluid limit of a heavily loaded EDF queue with impatient customers,
{\it Markov Proc. Rel. Fields} 14 (2008), 131--158,

\bibitem{daley-verejones} D. J. Daley and D. Vere-Jones,
{\it An {I}ntroduction to the {T}heory of {P}oint {P}rocesses. Vol. I. Elementary theory and methods.}
Second edition. Probability and its Applications (New York). Springer-Verlag, New York, 2003.


\bibitem{Doy-Leh-Shre} B. Doytchinov, J. Lehoczky and S. Shreve,
Real-time queues in heavy traffic with earliest-deadline-first
queue discipline. {\it Ann. Appl. Probab.} 11 (2001), No. 2, 332--378.

\bibitem{ethier-kurtz} S. N. Ethier and T. G. Kurtz,
{\it Markov Processes: Characterization and Convergence.} Wiley, 1986.




\bibitem{hal-whi} S. Halfin and W. Whitt.
Heavy-traffic limits for queues with many exponential servers.
{\it Oper. Res.} 29 (1981), no. 3, 567--588

\bibitem{jacod-shiryaev} J. Jacod and A.N. Shiryaev,
{\it Limit Theorems for Stochastic Processes.} Springer-Verlag 1987.


\bibitem{kang-ramanan} W. Kang and K. Ramanan,
Fluid limits of many-server queues with reneging,
{\it Ann. Appl. Probab.} 20 (2010), 2204--2260.

\bibitem{kang-ramanan1} W. Kang and K. Ramanan,
Asymptotic approximation for stationary distribution of many-server queues with abandonment,
{\it Ann. Appl. Probab.} 22, (2012) 477--521.

\bibitem{kaspi-ramanan} H. Kaspi and K. Ramanan,
Law of large numbers limits for many-server queues,
{\it Ann. Appl. Probab.} 21 (2011), 33--114.

\bibitem{kruk}
L. Kruk, Invariant states for fluid models of EDF networks: Nonlinear lifting map.
{\it Probab. Math. Statist.}, 30 (2010): 289--315.

\bibitem{kruk-lehoc-ram-shre} L. Kruk, J. Lehoczky, K. Ramanan and S. Shreve,
Heavy-traffic analysis for EDF queues with reneging, {\it Ann. Appl. Probab.}
21 (2011), no. 2, 484--545.

\bibitem{mandel-mom} A. Mandelbaum and P. Mom\'{c}ilovi\'{c},
Personalized queues: the customer view, via least-patience-first routing.
Preprint.

\bibitem{Moyal} P. Moyal,
On queues with impatience: stability, and the optimality of Earliest Deadline First,
{\it Queueing Syst.} 75, 2--4, 211-242, 2014.

\bibitem{panwar-towsley} S. S. Panwar and D. Towsley,
On the optimality of the STE rule for multiple
server queues that serve customer with deadlines.
Technical Report 88--81, Dept.\ of Computer and
Information Science, Univ. Massachusetts, Amherst.

\bibitem{panwar-towsley-wolf} S. S. Panwar, D. Towsley and J. K. Wolf,
Optimal scheduling policies for a class of queues with customer deadlines to the beginning of service,
{\it Journal of the Association for Computing Machinery}, Vol.35, pp.832--844, 1988.

\bibitem{reed} J. Reed. The G/GI/N queue in the Halfin-Whitt regime,
{\it Ann. Appl. Probab.} 19 (2009), No. 6, 2211--2269.

\bibitem{zuniga} A. Walsh Zu\~{n}iga. Fluid limits of many-server queues with abandonments, general service and continuous patience time distributions.  {\it Stoch. Proc. Appl.} 124 (2014), no. 3, 1436--1468.

\bibitem{whitt} W. Whitt. Fluid models for multiserver queues with abandonments,
{\it Oper. Res.} 54 (2006) No. 1, 37--54.


\bibitem{zhang} J. Zhang. Fluid models of many-server queues with abandonment. {\it Queueing Syst.} Vol 73 (2013), no. 2, 147--193.



\end{thebibliography}

\end{document}